\theoremstyle{plain}
\newtheorem{lemma}{Lemma}
\newtheorem{corollary}[lemma]{Corollary}
\newtheorem*{main}{Main Theorem}
\newtheorem{assumption}{Assumption}
\theoremstyle{remark}
\newtheorem*{acknowledgment}{Acknowledgment}
\newtheorem{remark}{Remark}
\newcommand{\cv}{\nabla}
\newcommand{\dn}{\check}
\newcommand{\Ds}{\partial_s}
\newcommand{\mb}{\mathbb}
\newcommand{\mc}{\mathcal}
\newcommand{\mr}{\mathrm}
\newcommand{\dx}{\mr{d}\xi}
\newcommand{\ds}{\mr{d}s}
\newcommand{\lh}{\big(}
\newcommand{\lp}{\langle}
\newcommand{\rh}{\big)_{\mc G}}
\newcommand{\rn}{\|_{\mc G}}
\newcommand{\rp}{\rangle}
\newcommand{\ten}{\otimes}
\newcommand{\up}{\hat}
\newcommand{\ve}{\varepsilon}
\newcommand{\vp}{\varphi}
\DeclareMathOperator{\Rm}{Rm}
\DeclareMathOperator{\Rc}{Rc}
\newcounter{mnotecount}[section]
\let\oldmarginpar\marginpar
\renewcommand\marginpar[1]{\-\oldmarginpar[\raggedleft\footnotesize #1]
{\raggedright\footnotesize #1}}
\begin{document}

\title[Ricci flow neckpinches without rotational symmetry]
{Ricci flow neckpinches without \\ rotational symmetry}

\author{James Isenberg}
\address[James Isenberg]{University of Oregon}
\email{isenberg@uoregon.edu}
\urladdr{http://www.uoregon.edu/$\sim$isenberg/}

\author{Dan Knopf}
\address[Dan Knopf]{University of Texas at Austin}
\email{danknopf@math.utexas.edu}
\urladdr{http://www.ma.utexas.edu/users/danknopf}

\author{Nata\v sa \v Se\v sum}
\address[Nata\v sa \v Se\v sum]{Rutgers University}
\email{natasas@math.rutgers.edu}
\urladdr{http://www.math.rutgers.edu/$\sim$natasas/}

\thanks{
JI thanks the NSF for support in PHY-1306441.
DK thanks the NSF for support in DMS-1205270.
N\v S thanks the NSF for support in DMS-0905749 and DMS-1056387.
JI also thanks the Mathematical Sciences Research Institute in Berkeley, California
for support under grant 0932078 000. Some of this work was carried out while JI was
in residence at MSRI during the fall of 2013.
}

\begin{abstract}
We study ``warped Berger'' solutions $\big(\mc S^1\times\mc S^3,G(t)\big)$
of Ricci flow: generalized warped products with the metric induced on
each fiber $\{s\}\times\mathrm{SU}(2)$ a left-invariant Berger metric.
We prove that this structure is preserved by the flow, that these solutions develop
finite-time neckpinch singularities, and that they asymptotically approach
round product metrics in space-time neighborhoods of their singular sets,
in precise senses. These are the first examples of Ricci flow solutions
without rotational symmetry that become asymptotically rotationally
symmetric locally as they develop local finite-time singularities.
\end{abstract}

\maketitle
\setcounter{tocdepth}{1}
\tableofcontents

\section{Introduction}
\label{Intro}
There are many examples of solutions of parabolic geometric \textsc{pde} that become round as they
develop global singularities: for instance, this phenomenon has been observed, in chronological
order, for $3$-manifolds of positive Ricci curvature evolving by Ricci flow \cite{Ha82}, for convex
hypersurfaces evolving by mean curvature flow \cite{Hu84}, for compact embedded solutions
of curve-shortening  flow \cite{GH86, Gr87}, and for $1/4$-pinched solutions of Ricci flow \cite{BS09}.
Although these examples are usually viewed as special cases of the propensity of geometric flows to 
asymptotically approach constant-curvature geometries, it can be informative instead to interpret them in
the spirit of Klein's \emph{Erlangen Program} as examples of symmetry enhancement along geometric flows,
with these solutions asymptotically acquiring  larger symmetry groups than the symmetry groups of their
initial data.

There is growing evidence that the same phenomenon holds locally in space-time neighborhoods
of local singularities. For example, rotationally-symmetric solutions of Ricci flow that develop
neckpinch singularities asymptotically acquire the additional translational symmetry of the
cylinder soliton \cite{AK04, AK07}. More recently, it has been shown that any complete
noncompact $2$-dimensional solution of mean curvature flow that is sufficiently $C^3$-close to a standard
round neck at some time will develop a finite-time singularity and become asymptotically rotationally
symmetric  in a space-time neighborhood of that singularity \cite{GKS12, GK13}. As well, numerical experiments
support the expectation that broader classes of mean curvature flow solutions asymptotically develop
additional local symmetries as they become singular \cite{GIK}. All of these results contribute to the
developing heuristic principle that singularities of parabolic geometric evolution equations are nicer than
one might naively expect.

In this paper, we obtain an analogous result for $4$-dimensional solutions of Ricci flow, but with comparatively
weaker hypotheses on the initial data than those used in \cite{GKS12, GK13}. We replace those hypotheses
with a structural assumption that the metrics under consideration are certain Riemannian submersions.
Specifically, we consider generalized warped product solutions
$\big(\mc S^1\times\mc S^3,G(t)\big)$, where
\begin{equation}	\label{SimpleAnsatz}
 G = (\ds)^2+\Big\{f^2\,\omega^1\ten\omega^1
 	   + g^2\big(\omega^2\ten\omega^2
	   + \omega^3\ten\omega^3\big)\Big\}.
\end{equation}
For each $s\in\mc S^1$, the quantity in braces is a left-invariant metric on the fiber
$\mathrm{SU}(2)\approx\mc S^3$ over $s$, written with respect to a coframe
$(\omega^1,\omega^2,\omega^3)$ that is algebraically dual to a fixed Milnor frame. We
provide a detailed description of these geometries, which we call \emph{warped Berger metrics,} 
in Appendix~\ref{Ansatz}. As shown there,  this \emph{Ansatz} is preserved by Ricci flow. We
prove that if a metric of this form develops a local ``neckpinch'' singularity, then the fibers become
asymptotically round in space-time neighborhoods of its singular sets. More precisely, we prove
progressively stronger results under these progressively stronger assumptions:

\begin{assumption}	\label{MildAssumption}
$\big(\mc S^1\times\mc S^3,G(t)\big)$  is a warped Berger solution of Ricci flow such that
\begin{enumerate} 
\item $f\leq g$ at $t=0$;
\item $\big\{\min_{\mc S^1\times\mc S^3}R\big\}\big\{\max_{\mc S^1\times\mc S^3}g^2\big\}>-3$
at $t=0$; and
\item there exists $T<\infty$ such that $\limsup_{t\nearrow T}\max_{s\in\mc S^1}|\Rc(s,t)|=\infty$.
\end{enumerate}
\end{assumption}

\begin{assumption}	\label{StrongerAssumption} 
$\big(\mc S^1\times\mc S^3,G(t)\big)$ is a warped Berger solution of Ricci flow that satisfies
Assumption~\ref{MildAssumption} and has the additional properties that at $t=0$,
\begin{enumerate}
\item $f\geq(1-\ve)g$ for some $\ve$ small enough\,\footnote{Note that $\ve=1/4$ is
sufficiently small.} that
$2(1-\ve)^5+4(1-\ve)^4>4/3$; and
\item $|f_s|\leq1$.
\end{enumerate}
\end{assumption}

\begin{assumption}	\label{Reflection}
$\big(\mc S^1\times\mc S^3,G(t)\big)$ is a warped Berger solution of Ricci flow that satisfies
Assumption~\ref{StrongerAssumption} and is reflection symmetric at $t=0$, with its smallest neck
located at a fixed point $\xi_*\in\mc S^1$.
\end{assumption}

As we observe below, it follows easily from our construction in Appendix~\ref{InitialData}
that these assumptions are not vacuous. Our main results are as follows:

\begin{main}
The eccentricity of every warped Berger solution of Ricci flow is uniformly bounded: there exists
$C_0$ depending only on the initial data such that the estimate
\begin{equation}
\label{f-g}
|f-g|\leq C_0\min\{f,g\}
\end{equation}
holds pointwise for as long as the solution exists, without additional assumptions.
\medskip

\noindent \textsc{(i)} There exist open sets of warped Berger metrics satisfying
Assumption~\ref{MildAssumption} such that all solutions originating in
these sets develop local neckpinch singularities at some $T<\infty$. 
Each such solution has the properties that

\textsc{(a)} the ordering $f\leq g$ is preserved;

\textsc{(b)} the singularity is Type-I, with $|\Rc|\leq C\big(\min f(\cdot,t)\big)^{-2}$, and
\begin{equation}
\nonumber
\label{minf}
\frac{1}{C}\sqrt{T-t}\leq\min f(\cdot,t)\leq C\sqrt{T-t};
\end{equation}

\textsc{(c)} the diameter is bounded as $t\nearrow T$.
\medskip

\noindent \textsc{(ii)}There exist open sets of warped Berger metrics satisfying
Assumption~\ref{StrongerAssumption} such that as solutions originating in
these sets become singular, they become asymptotically round at rates that
break scale invariance. Specifically, in addition to the properties above,
they satisfy the following\,\footnote{The first three estimates, which are proved in
Section~\ref{SharperEstimates}, hold under the weaker assumption that $|f_s|\leq2/\sqrt3$ initially.}
$C^0$, $C^1$, and $C^2$ bounds at the neck:\,\footnote{The $\kappa_{ij}$ here are the sectional
curvatures defined in equations~\eqref{Kurv12}--\eqref{Kurv02} below. Note that for simple warped-product
metrics with $f=g$, one has $\kappa_{12}=\kappa_{23}$ and $\kappa_{01}=\kappa_{02}$.}
\footnote{We arrange these estimates to emphasize the scale invariance of the quantities on the \textsc{lhs}.}
\begin{align}
\label{|f-g|}
(T-t)^{-1/2}|f-g|&\leq C\sqrt{T-t},\\
\label{kappa12}
(T-t)|\kappa_{12}-\kappa_{23}|&\leq C\sqrt{T-t},\\
\label{kappa01}
(T-t)|\kappa_{01}-\kappa_{02}|&\leq C\sqrt{T-t}.
\end{align}
In a neighborhood of each smallest neck, where $\kappa_{01}<0$, there is the further bound
\begin{equation}
\label{kappa0102}
(T-t)\,\big(|\kappa_{01}| + |\kappa_{02}| \big) \leq \frac{C}{|\log(T-t)|}.
\end{equation}
The radius of a smallest neck is $(1+o(1))\,2\sqrt{T-t}$.
Type-I blowups $\tilde G=(T-\nobreak t)^{-1}G$ of the solution converge near each neck to the shrinking
cylinder soliton. If $S$ is the arclength from a smallest neck, and $\sigma:=S/\sqrt{T-t}$,
then there exist constants $c,\,C<\infty$ independent of time, such that 
as $t\nearrow T$, the estimates
\begin{equation}
\label{f/2}
1+o(1)\leq\frac{f}{2\sqrt{T-t}}\leq1+C\,\frac{\sigma^2}{|\log(T-t)|}
\end{equation}
and
\begin{equation}
\label{g/2}
1+o(1)\leq\frac{g}{2\sqrt{T-t}}\leq\big(1+o(1)\big)\left(1+C\,\frac{\sigma^2}{|\log(T-t)|}\right)
\end{equation}
hold for $|\sigma|\leq c\sqrt{|\log(T-t)|}$, and the estimate
\begin{equation}
\label{sigma}
\frac{f}{\sqrt{T-t}}+\frac{g}{\sqrt{T-t}}\leq C\,\frac{|\sigma|}{\sqrt{|\log(T-t)|}}
\sqrt{\log\left(\frac{|\sigma|}{\sqrt{|\log (T-t)|}}\right)}
\end{equation}
holds for $c\sqrt{|\log(T-t)|}\leq|\sigma|\leq(T-t)^{-\ve/2}$, for $\ve\in(0,1)$.
\medskip

\noindent \textsc{(iii)} There exist open sets of reflection-symmetric warped Berger metrics
satisfying Assumption~\ref{Reflection} such that any solution originating in these sets has the
following property: for any small $\delta$ and large $\Sigma$, there exist $T_*<T$
and $C$ such that the stronger estimate
\[
(T-t)^{-1/2}|f-g|\leq C(T-t)^{1+\delta}
\]
holds for all $|\sigma|\leq\Sigma$ and $T_*<t<T$.
\end{main}

We note that warped Berger solutions may also develop global singularities in finite time;
see Remarks~\ref{NotVacuous}--\ref{Local/Nonlocal} below.
We further note that the assumption $f\leq g$ is geometrically natural for initial data giving rise
to neckpinch singularities, in the following sense. Manifolds with $f\gg g$ locally resemble
a product of a small $\mc S^2$ with a large surface and can have substantially
negative scalar curvature. So it is not unreasonable to expect qualitatively different behavior
for solutions originating from such initial data.
\smallskip

Our results in this paper are obtained in a series of Lemmas that prove more than we have
summarized in the Main Theorem. The paper is organized as follows. In Appendix~\ref{Ansatz},
we review basic geometric calculations that show in particular that the metric
\emph{Ansatz}~\eqref{SimpleAnsatz} and the inequality $f\leq g$ are preserved under Ricci flow.
In Section~\ref{floweq}, we summarize the conclusions of Appendix~\ref{Ansatz} that are needed
in the remainder of the paper. In Section~\ref{control}, we first prove estimate \eqref{f-g}, which
requires no assumptions beyond the form~\eqref{SimpleAnsatz} of the metric.
The results in Part \textsc{(i)} of the Main Theorem, which rely only on
Assumption~\ref{MildAssumption}, are proved in the remainder of Section~\ref{control}
and Section~\ref{Singular}. The results in Part \textsc{(ii)} of the Main Theorem,
which rely on Assumption~\ref{StrongerAssumption}, are proved in
Sections~\ref{SharperEstimates}--\ref{cylinder}. The results in Part \textsc{(iii)},
which rely on Assumption~\ref{Reflection}, are proved in Section~\ref{reflection}.
In Appendix~\ref{InitialData}, we demonstrate the existence of sets (open in the subspace
of metrics with prescribed symmetries) of initial data that satisfy our various Assumptions.
Then in Appendix~\ref{FormalNeckpinch},  we study parabolic dilations that motivate
the calculations in Section~\ref{reflection} and lead one to expect that the precise
asymptotics proved in \cite{AK07} for rotationally symmetric neckpinches 
should be satisfied by the the non-rotationally symmetric solutions analyzed here.

\begin{acknowledgment}
The authors warmly thank Peter Gilkey for suggesting a version of the problem studied in this paper.
\end{acknowledgment}

\section{Ricci flow equations for warped Berger metrics}
\label{floweq}
It follows from the calculations in Appendix~\ref{Ansatz} that for metrics of the
form~\eqref{SimpleAnsatz}, the curvatures of the metric induced on each fiber
$\{s\}\times\mc S^3$ are $\up\kappa_{12}=\up\kappa_{31}=f^2/g^4$ and
$\up\kappa_{23}=(4g^2-3f^2)/g^4$. The curvatures of the corresponding vertical planes in the
total space are
\begin{equation}
\label{Kurv12}
 \kappa_{12}=\kappa_{31}=\frac{f^2}{g^4}-\frac{f_s g_s}{fg}
\end{equation}
and
\begin{equation}
\label{Kurv23}
 \kappa_{23}=\frac{4g^2-3f^2}{g^4}-\frac{g_s^2}{g^2}.
\end{equation}
The curvatures of mixed vertical-horizontal planes in the total space are
\begin{equation}
\label{Kurv01}
\kappa_{01}=-\frac{f_{ss}}{f}
\end{equation}
and
\begin{equation}
\label{Kurv02}
\kappa_{02}=\kappa_{03}=-\frac{g_{ss}}{g}.
\end{equation}

Using \eqref{Kurv12}--\eqref{Kurv02} together with \eqref{RFsystem}, one determines
that the Ricci flow equations for these geometries take the form 
\begin{subequations}		\label{SRFsystem}
\begin{align}
f_t&=f_{ss}+2\frac{g_s}{g}f_s-2\frac{f^3}{g^4},\\
g_t&=g_{ss}+\left(\frac{f_s}{f}+\frac{g_s}{g}\right)g_s
+2\frac{f^2-2g^2}{g^3}.
\end{align}
\end{subequations}
If $f=g$, this system reduces to equation~(10) in \cite{AK04},
with $n=3$ and $\psi=f$.

To obtain this strictly parabolic form \eqref{SRFsystem} for the Ricci flow equations, 
 we have fixed a gauge,\footnote{By equation~\eqref{DistanceEvolution} in
Appendix~\ref{Ansatz}, the gauge function $\rho:=\frac{\partial s}{\partial\xi}$
evolves by $(\log\rho)_t=\frac{f_{ss}}{f}+2\frac{g_{ss}}{g}$. If $f=g$, this
evolution equation reduces to equation~(11) in \cite{AK04}, with $\vp=\rho$.}
replacing the non-geometric coordinate $\xi\in\mc S^1$ with a coordinate
$s(\xi,t)$ representing arclength
from a fixed but arbitrary point $\xi_0\in\mc S^1$.
By a variant of Calabi's trick, we may always assume that $s$ is a smooth coordinate at any
spatial point where we apply the maximum principle. 
We note that this choice of gauge results in the commutator formula
\begin{equation}	\label{commute}
    \left[\frac{\partial}{\partial t},\frac{\partial}{\partial s}\right]
    =-(\log\rho)_t\frac{\partial}{\partial s}
    =-\left(\frac{f_{ss}}{f}+2\frac{g_{ss}}{g}\right)\frac{\partial}{\partial s}.
\end{equation}

\begin{remark}
The system \eqref{SRFsystem}  can be re-expressed in the more geometric form 
\begin{align*}
(\log f)_t&=-\kappa_{01}-2\kappa_{12},\\
(\log g)_t&=-\kappa_{02}-\kappa_{23}-\kappa_{31}.
\end{align*}
\end{remark}

\section{Controlling the evolving geometries}
\label{control}
To proceed, we derive various evolution equations implied by the Ricci flow
system~\eqref{SRFsystem}. In doing so, we  use the fact that for
any $C^2$ function $\phi(s)$, one has
\begin{equation}	\label{DefineLaplacian}
 \Delta\phi=\phi_{ss}+\left\{\frac{f_s}{f}+2\frac{g_s}{g}\right\}\phi_s.
\end{equation}

\subsection{The shape of the metric}
Because $\mc S^1\times\mc S^3$ is compact, there are well-defined functions $M$
and $\dn M$ given by
\begin{equation}
M(s,t):=\min\{f(s,t),\,g(s,t)\}\qquad\text{and}\qquad\dn
M(t):=\min_{s\in\mc S^1}M(s,t).
\end{equation}
One readily verifies that $\dn M$ is a Lipschitz continuous function of time.\footnote{Comments
related to this verification appear in the proof of Lemma~\ref{lem-upper}.}
\medskip

We begin by considering scale-invariant quantities $(f-g)/g$ and $(g-f)/f$ that measure
eccentricity: how far each fiber $\{s\}\times\mc S^3$ is from being round. The evolution of
these quantities is governed by the equations
\begin{equation}
\label{Evolvf-g}
 \left(\frac{f-g}{g}\right)_t=\Delta\left(\frac{f-g}{g}\right)
 +\left(\frac{g_s}{g}-\frac{f_s}{f}\right)\left(\frac{f-g}{g}\right)_s
 -4\,\frac{f}{g^3}\left(\frac{f+g}{g}\right)\left(\frac{f-g}{g}\right)
\end{equation}
and
\begin{equation}
\label{Evolvg-f}
  \left(\frac{g-f}{f}\right)_t=\Delta\left(\frac{g-f}{f}\right)
  +\left(\frac{f_s}{f}-\frac{g_s}{g}\right)\left(\frac{g-f}{f}\right)_s
  -4\,\frac{f}{g^3}\left(\frac{g+f}{f}\right)\left(\frac{g-f}{f}\right),
\end{equation}
respectively. Using these equations, we show that 
 the fibers must become
round near any points where $f$ or $g$ become zero, as expressed in the following Lemma.

\begin{lemma}	\label{C0pinching}
There exists $C_0$ depending only on the initial data such that the estimate
\[
 |f-g|\leq C_0M
\]
holds for as long as a given solution exists.
\end{lemma}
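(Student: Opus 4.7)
The plan is to apply a maximum principle argument directly to the evolution equations \eqref{Evolvf-g} and \eqref{Evolvg-f}. Both quantities $u := (f-g)/g$ and $v := (g-f)/f$ satisfy scalar PDEs of the form $\phi_t = \Delta \phi + X \cdot \phi_s + R\,\phi$, where the reaction coefficient $R = -4(f/g^3)(f+g)/g$ in the $u$ equation and the analogous coefficient in the $v$ equation are both strictly negative on the compact manifold $\mc S^1\times\mc S^3$ as long as the solution exists. The gradient term, being linear in $\phi_s$ with a coefficient that is bounded at an interior extremum of $\phi$, poses no obstruction to applying the maximum principle in the arclength gauge; at such an extremum the Calabi trick legitimizes treating $s$ as a smooth coordinate.

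Carrying this out: at a point where $u = (f-g)/g$ achieves its spatial maximum at time $t$, we have $u_s = 0$ and $\Delta u \leq 0$. If that maximum value is positive, then the reaction term contributes a strictly negative quantity, so by the standard ODE-comparison form of the maximum principle (Hamilton's version),
\[
\frac{d}{dt}\Big(\max_{\mc S^1\times\mc S^3} u\Big)_+ \leq 0.
\]
Hence $\max u \leq C_1$ where $C_1 := \max\{0, \max_{t=0} u\}$ depends only on initial data. The identical argument applied to $v$ yields $\max v \leq C_2$ with $C_2$ likewise determined by the initial data.

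To conclude, fix $(s,t)$ and consider the two cases. If $f(s,t) \geq g(s,t)$, then $M(s,t) = g(s,t)$ and $|f-g|/M = u \leq C_1$. If $f(s,t) < g(s,t)$, then $M(s,t) = f(s,t)$ and $|f-g|/M = v \leq C_2$. Setting $C_0 := \max\{C_1, C_2\}$ yields $|f-g| \leq C_0 M$ pointwise and for all times the solution exists.

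The main step requiring care is the verification that the reaction coefficients really do have a favorable sign and that the solution is regular enough to justify the pointwise application of the maximum principle in the arclength gauge; both are handled respectively by the structural form of \eqref{Evolvf-g}--\eqref{Evolvg-f} (where the negative reaction term appears explicitly and its coefficient is bounded away from $+\infty$ only through $f, g > 0$, which holds on any time interval of existence) and by Calabi's trick as noted after \eqref{SRFsystem}. No hypothesis beyond the form \eqref{SimpleAnsatz} of the metric is needed, matching the statement of the lemma.
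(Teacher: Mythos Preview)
Your proof is correct and follows essentially the same approach as the paper: apply the parabolic maximum principle to the evolution equations \eqref{Evolvf-g} and \eqref{Evolvg-f}, using the explicitly negative reaction coefficients to conclude that the scale-invariant ratios $(f-g)/g$ and $(g-f)/f$ are bounded by their initial values. The paper states the bounds as two-sided inequalities $|(f-g)/g|\leq C_0$ and $|(g-f)/f|\leq C_0$, whereas you bound only the positive parts and then do an explicit case split on the sign of $f-g$; these are logically equivalent formulations of the same argument.
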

\begin{proof}
Applying the parabolic maximum principle to the evolution equations~\eqref{Evolvf-g} and
\eqref{Evolvg-f} for  $(f-g)/g$ and $(g-f)/f$, one obtains $C_0$ depending only on the initial data such that
\[
\left|\frac{f-g}{g}\right|\leq C_0
\qquad\text{and}\qquad
\left|\frac{g-f}{f}\right|\leq C_0
\]
for as long as a solution exists. The result immediately follows.
\end{proof}

We next derive a two-sided time-dependent bound for $\dn M$, starting with an upper bound.

\begin{lemma}
\label{lem-upper}
If there exists $T<\infty$ such that $\dn M(T)=0$, then
there exists a uniform constant $C$ such that
\[
\dn M^2\leq C(T-t).
\]
\end{lemma}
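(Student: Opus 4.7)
The plan is to establish the pointwise lower bound $\frac{d}{dt}\dn M^2 \geq -C$ at a realizing spatial minimum and then integrate this inequality from $t$ up to $T$, using the hypothesis $\dn M(T) = 0$ to obtain $\dn M^2(t) \leq C(T-t)$.

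First I would invoke Lemma~\ref{C0pinching} to ensure that $f$ and $g$ remain comparable. If the spatial minimum of $M$ at time $t$ is attained at a point $s_*$ where $M = f$, then the ordering $f \leq g$ together with $|f-g| \leq C_0 f$ gives $g \leq (1+C_0)f$; in particular $g^4 \geq f^4$ and consequently
\[
\frac{f^3}{g^4} \leq \frac{1}{f} = \frac{1}{\dn M(t)}.
\]
Symmetrically, if $M = g$ at $s_*$, then $g \leq f \leq (1+C_0)g$, so $f^2 \geq g^2$ yields $f^2 - 2g^2 \geq -g^2$, and hence $(f^2-2g^2)/g^3 \geq -1/g = -1/\dn M(t)$.

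Next I would apply the spatial maximum principle at $s_*$ (using the variant of Calabi's trick mentioned after \eqref{SRFsystem} to make $s$ smooth there). In the case $M(s_*,t) = f(s_*,t)$, the conditions $f_s(s_*,t) = 0$ and $f_{ss}(s_*,t) \geq 0$ combine with the first equation of \eqref{SRFsystem} to give
\[
f_t(s_*,t) \geq -2\,\frac{f^3}{g^4} \geq -\frac{2}{\dn M(t)},
\]
so that $(f^2)_t(s_*,t) = 2 f f_t \geq -4$. The analogous computation using the second equation of \eqref{SRFsystem} in the case $M(s_*,t) = g(s_*,t)$ yields $(g^2)_t(s_*,t) \geq -4$. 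In either case, $(M^2)_t(s_*,t) \geq -4$.

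Because $\dn M$ is Lipschitz in $t$, these pointwise bounds at the realizing minima combine (via a standard Dini-derivative/envelope argument) into the a.e.\ inequality $\frac{d}{dt}\dn M^2(t) \geq -4$ on $[0,T)$. Integrating from $t$ to $T$ and using $\dn M(T)=0$ then gives
\[
-\dn M^2(t) = \dn M^2(T) - \dn M^2(t) \geq -4(T-t),
\]
i.e.\ $\dn M^2(t) \leq 4(T-t)$, which is the desired bound. The only delicate step is justifying the Dini/envelope passage when the minimum is realized simultaneously at several points or at a place where $f = g$; this is a standard technicality, handled by the Hamilton-style maximum principle for Lipschitz functions (or by analyzing the pointwise minimum of the two smooth functions $f$ and $g$).
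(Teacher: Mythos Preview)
Your proposal is correct and follows essentially the same approach as the paper: derive a lower bound $\frac{d}{dt}\dn M^2 \geq -C$ at a realizing spatial minimum via the evolution equations~\eqref{SRFsystem}, then integrate from $t$ to $T$. The paper handles the regularity of the minimum slightly differently---it treats $\dn f$ and $\dn g$ separately and invokes the Sturmian theorem to ensure $f$ is Morse for all but finitely many times, then tracks the critical point via the implicit function theorem---whereas you work with $\dn M$ directly and appeal to a Hamilton-style envelope argument; but these are interchangeable technicalities, and the core computation is the same. (Incidentally, your case analysis at $s_*$ already gives the ordering $f\leq g$ or $g\leq f$ at that point, so your invocation of Lemma~\ref{C0pinching} is in fact unnecessary for the bounds you write down; the paper, by contrast, genuinely needs the comparability since at $\min f$ no ordering with $g$ is assumed.)
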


\begin{proof}
The Sturmian theorem \cite{Ang88} applied to $f$ implies that for all but a finite set of
times, $f$ is a Morse function with smoothly evolving critical points, whence it follows that
the function $\dn f(t):=\min\{f(s,t):f_s(s,t)=0\}$ is Lipschitz continuous.
We now slightly abuse notation by regarding $f$ as $f(\xi,t)$, where the spatial coordinate $\xi$ is
independent of time --- i.e., we ignore here the arclength coordinate $s(\xi,t)$.
If $t$ is such that $\dn f'(t)$ exists, then it follows from  the implicit function theorem that  there
exists a function $\bar\xi(\bar t)$ defined for all $\bar t$ in a sufficiently small neighborhood of $t$
such that $f_{\bar\xi}(\bar\xi(\bar t),\bar t)=0$. Therefore,  one has
\begin{align*}
\frac{d}{dt}\dn f(t)
	&=\frac{\partial}{\partial t}f(\bar\xi(t),t)
	+\frac{\partial}{\partial\bar\xi}f(\bar\xi(t),t)\frac{d\bar\xi}{dt}\\
	&=f_t(\bar\xi(t),t)\\
	&=f_{ss}(\bar\xi(t),t)-2\frac{f^3(\bar\xi(t),t)}{g^4(\bar\xi(t),t)}\\
	&\geq-\frac{C}{\dn f(t)},
\end{align*}
since it follows from Lemma~\ref{C0pinching} that $f$ and $g$ are comparable,
and since $f$ attains a local minimum in space at $\bar\xi(t)$. Thus  there exists a uniform
constant $C$ such that $\frac{d}{dt}\{(\dn f)^2\}\geq-C$.

An entirely analogous argument applies to $\dn g(t):=\min\{g(s,t):g_s(s,t)=0\}$.
It follows easily that $\frac{d}{dt}(\dn M^2)\geq-C$ holds almost everywhere in
time,\footnote{This differential inequality may be interpreted as the $\limsup$ of
forward difference quotients.} whereupon integration yields
\[
-\dn M^2(t)=\dn M^2(T)-\dn M^2(t)\geq-C(T-t).
\]
\end{proof}

\begin{lemma}
\label{lem-lower}
Suppose that at time $t=0$, the metric satisfies $f\leq g$, and the scalar curvature satisfies
$\big\{\min_{\mc S^1\times\mc S^3}R\big\}\big\{\max_{\mc S^1\times\mc S^3}g^2\big\}>-3$.
If there exists $T<\infty$ such that $\dn M(T)=0$, then there exists a uniform constant $c$ such that
\[
\dn M^2\ge c(T-t).
\]
\end{lemma}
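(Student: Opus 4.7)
My plan is to derive a pointwise ODE comparison for $\dn M^2(t)$. Since the ordering $f \leq g$ is preserved by Appendix~\ref{Ansatz}, we have $\dn M = \dn f$, and it suffices to show that at a spatial minimizer $\bar s(t)$ of $f$, the derivative satisfies $\tfrac{d}{dt}\dn f^2 \leq -c$ for some uniform $c>0$; integrating backward from $t$ to $T$ then yields the claim.

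The first step is to propagate the hypothesis $\min R \cdot \max g^2 > -3$ along the flow. The standard parabolic maximum principle applied to $R_t = \Delta R + 2|\Rc|^2 \geq \Delta R$ shows that $R_{\min}(t)$ is non-decreasing. Independently, the flow equation~\eqref{SRFsystem} yields
\[
(g^2)_t = \Delta(g^2) - 4 g_s^2 + 4\bigl(f^2/g^2 - 2\bigr),
\]
and at a spatial maximum of $g^2$ the Laplacian and gradient terms are nonpositive while the reaction term is at most $-4$ by $f \leq g$. Hence $g^2_{\max}$ is non-increasing. Taking the product, either $R_{\min}$ becomes nonnegative or the product $R_{\min}(t)\,g^2_{\max}(t)$ is non-decreasing; in either case $R(s,t) \geq -3/g^2_{\max}(0)$ uniformly along the flow.

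Next, the Sturmian argument of Lemma~\ref{lem-upper} produces a smoothly evolving minimizer $\bar s(t)$ of $f$, at which $f_s = 0$, $f_{ss}\geq 0$, and
\[
\tfrac{d}{dt}\dn f = f_{ss}(\bar s,t) - 2\,f^3(\bar s,t)/g^4(\bar s,t).
\]
Since $-4f^4/g^4 \leq -4/(1+C_0)^4$ by the pinching of Lemma~\ref{C0pinching}, it suffices to bound $ff_{ss}(\bar s,t)$ strictly below $2/(1+C_0)^4$. I plan to extract this by solving the scalar curvature identity at $\bar s$,
\[
R = -2\frac{f_{ss}}{f} - 4\frac{g_{ss}}{g} + R_\vee - 2\frac{g_s^2}{g^2}, \qquad R_\vee = \frac{8}{g^2} - \frac{2f^2}{g^4},
\]
for $f_{ss}/f$ and multiplying by $f^2$. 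The bounds $f^2 R_\vee \leq 6$ and $-f^2 R \leq 3$ (from the propagated hypothesis together with $f \leq g \leq g_{\max}(0)$) supply the essential control on two of the three resulting contributions.

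The main obstacle will be handling the remaining term $-2f^2 g_{ss}/g - f^2 g_s^2/g^2$ at $\bar s$, since $\bar s$ need not be critical for $g$. I anticipate addressing this by running a parallel Sturmian analysis at a spatial minimizer of $g$, using the pinching $|f-g|\leq C_0 M$ to infer that the two minima are comparable in value, and coupling the resulting ODEs for $\dn f^2$ and $\dn g^2$ to close the estimate. Alternatively, one may pass to the fiber volume density $V = fg^2$, whose evolution $(\log V)_t = \Delta(\log V) - R_\vee$ is particularly clean and whose spatial minimum controls $\dn M$ up to pinching constants.
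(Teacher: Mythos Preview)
Your preliminary steps---propagating $R_{\min}$ upward and $g_{\max}^2$ downward---are correct and match the paper. The gap is in the endgame.

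In your primary approach (working at the spatial minimizer $\bar s$ of $f$), the obstacle you flag is genuine and is not resolved by ``coupling'' with the ODE for $\dn g^2$: the uncontrolled term $g_{ss}(\bar s)$ lives at a different point from the uncontrolled term $f_{ss}(\bar s')$ arising in the analogous inequality for $g$, and the pinching $|f-g|\leq C_0 M$ compares only values, not second derivatives at distinct locations. There is no evident way to make these cancel.

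Your alternative via $V=fg^2$ is the right quantity---it is exactly what the paper uses---but your stated reason is misleading. The clean identity $(\log V)_t=\Delta(\log V)-R_\vee$ yields the \emph{wrong-sign} inequality at a spatial minimum of $V$: there $\Delta(\log V)\geq 0$, so one only obtains $\tfrac{d}{dt}\log m\geq -R_\vee$, which is useless for an upper bound. The actual mechanism is that $V$ is the combination whose second-order content $f_{ss}/f+2g_{ss}/g$ coincides with the combination appearing in the scalar curvature. Writing the evolution in non-divergence form and substituting the bound
\[
\frac{f_{ss}}{f}+2\frac{g_{ss}}{g}\;\leq\;-r_0+\frac{4}{g^2}-\frac{f^2}{g^4}-\frac{2f_sg_s}{fg}-\frac{g_s^2}{g^2}
\]
(from $R\geq r_0$), the residual first-order terms reassemble as $2(g_s/g)(\log V)_s-3g_s^2/g^2$, so at the minimum one gets
\[
\frac{d}{dt}\log m\;\leq\;-r_0-\frac{3}{g^2}\;\leq\;-\frac{c_0}{g^2},
\]
the last step using the propagated hypothesis. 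Then $dm/dt\leq -c_0 f\leq -c\,m^{1/3}$ by pinching, and integration from $t$ to $T$ gives $m^{2/3}\geq c(T-t)$, which pinching converts to $\dn M^2\geq c(T-t)$.
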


\begin{proof}
The positive function $m(t) := \min_{\mathcal{S}^1\times\mathcal{S}^3}(fg^2)$ is Lipschitz continuous.
It follows from \eqref{SRFsystem} that
\[
\frac{\partial}{\partial t} \log(fg^2) =
\left(\frac{f_{ss}}{f} + 2\frac{g_{ss}}{g}\right) + 4\frac{f_s g_s}{fg} + 2\frac{g_s^2}{g^2} + 2 \frac{f^2}{g^4} - \frac{8}{g^2}.
\]
Since $R$ is a supersolution of the heat equation (in the sense that $(\partial_t-\Delta)R\geq0$), there
exists a constant $r_0$ depending only on the initial data such that for as long as the flow exists, one has
\[
r_0\leq R=\kappa_{01}+\kappa_{02}+\kappa_{03}+\kappa_{12}+\kappa_{23}+\kappa_{31}.
\]
Substituting in expressions \eqref{Kurv12}--\eqref{Kurv02} for the curvatures and simplifying, one obtains
\begin{equation}
\label{eq-useful}
gf_{ss}+2fg_{ss}\leq(4-r_0 g^2)\frac{f}{g}-\frac{f^3}{g^3}-2f_s g_s-\frac{f}{g}g_s^2.
\end{equation}
Using this estimate and the consequence of Lemma~\ref{Berger} (in Appendix~\ref{Ansatz})
that the ordering $f \le g$ is preserved along the flow, we obtain
\begin{align*}
\frac{\partial}{\partial t} \log(fg^2)
&\le\left(\frac{4-r_0 g^2}{g^2} - \frac{f^2}{g^4} - 2\frac{f_s g_s}{fg} - \frac{g_s^2}{g^2}\right)
	+ 4\frac{f_s g_s}{fg} + 2\frac{g_s^2}{g^2} + 2 \frac{f^2}{g^4} - \frac{8}{g^2} \\
&= \frac{4-r_0 g^2}{g^2} + \frac{f^2}{g^4} + 2\,\frac{f_s g_s}{f g} + \frac{g_s^2}{g^2} - \frac{8}{g^2} \\
&\le -\frac{3+r_0 g^2}{g^2} + 2 \frac{g_s}{g}(\log(fg^2))_s - \frac{3g_s^2}{g^2}.
\end{align*}
This implies that almost everywhere in time, one has
\[
\frac{d}{dt}(\log m) \le -r_0-\frac{3}{g^2}.
\]
It is easy to see from \eqref{SRFsystem} that if $f\leq g$, then $g_{\max}(\cdot,t)$ is a
non-increasing function of time. So it follows from our assumptions on $f$, $g$, and $R$
that there exists $c_0>0$ such that $r_0\geq-(3-c_0)/g^2$, which implies that
\[\frac{dm}{dt} \le -c_0f \le -c (fg^2)^{\frac 13},\]
where $c>0$ is another uniform constant whose existence follows from Lemma~\ref{C0pinching}.
Because there exists $T<\infty$ with $\dn M(T) = 0$, it is clear that $m(T)=0$. Integrating the
a.e.~inequality
\[
\frac{dm}{dt} \le -c m^{\frac 13}
\]
over the time interval $[t,T]$, we thus obtain
\[
m(t)^{\frac 23} \ge c\, (T - t).
\]
Now by Lemma~\ref{C0pinching}, the inequality
\[
f^3\geq\frac{fg^2}{C}\geq\frac{\min_{\mc S^1\times\mc S^3}(fg^2)}{C}=\frac{m(t)}{C}
\]
holds everywhere in space and time, which implies in particular that\,\footnote{Here as
elsewhere in this paper, we follow the convention in analysis that uniform constants
are allowed to change from line to line without relabeling.}
\[
\min f(\cdot,t)^2\geq\frac{m(t)^{\frac23}}{C}\geq\frac{c}{C}(T-t).
\]
The same reasoning applies to $\min g(\cdot,t)^2$, whence the result follows.
\end{proof}

\subsection{Evolution of first derivatives}
Using~\eqref{commute}, it is straightforward to compute that
\begin{equation}	\label{f_s-evolution}
(f_s)_t=\Delta(f_s)-2\frac{f_s}{f}(f_s)_s
 	-\left\{6\frac{f^2}{g^4}+2\frac{g_s^2}{g^2}\right\}f_s
	+8\frac{f^3}{g^5}\,g_s
\end{equation}
and
\begin{equation}	\label{g_s-evolution}
 (g_s)_t = \Delta(g_s)-2\frac{g_s}{g}(g_s)_s
 +\left\{\frac{4}{g^2} - \frac{g_s^2}{g^2} - \frac{f_s^2}{f^2} - 6\frac{f^2}{g^4}\right\} g_s
 + 4\frac{f}{g^3} f_s.
\end{equation}
If $f=g$, these reduce to equation~(16) in \cite{AK04}.

\begin{lemma}
\label{lem-der}
Suppose that $f \leq g$ at time $t=0$, and define
\begin{align*}
C_f &:= \max\left\{ \frac{2}{\sqrt3},\;\max |f_s(\cdot,0)|\right\},\\
C_g &:= \max\left\{ 2\sqrt2,\;\max |g_s(\cdot,0)|\right\}.
\end{align*}
Then for as long as a solution exists, one has
\[
|f_s| \le C_f \qquad\mbox{and}\qquad   |g_s| \le C_g.
\]
\end{lemma}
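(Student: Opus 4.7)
The plan is to apply the parabolic maximum principle separately to $f_s^2$ and $g_s^2$, controlling the cross-coupling products $f_s g_s$ in each equation by Peter--Paul inequalities. Multiplying the evolution equations \eqref{f_s-evolution} and \eqref{g_s-evolution} by $2 f_s$ and $2 g_s$ respectively, and using $\Delta(\phi^2)=2\phi\Delta\phi+2\phi_s^2$, one obtains evolution equations for $f_s^2$ and $g_s^2$. At a space-time maximum, made smooth by Calabi's trick, the Laplacian is non-positive, the spatial derivative vanishes, and the drift contributions $-4(f_s^2/f)(f_s)_s$, $-4(g_s^2/g)(g_s)_s$ either vanish (when $f_s\neq 0$, resp.\ $g_s\neq 0$, forcing $(f_s)_s=0$, resp.\ $(g_s)_s=0$) or the max value is $0$; the reaction terms then yield
\begin{align*}
(f_s^2)_t &\le -\Bigl\{12\frac{f^2}{g^4}+4\frac{g_s^2}{g^2}\Bigr\}f_s^2+16\frac{f^3}{g^5}f_s g_s,\\
(g_s^2)_t &\le \Bigl\{\frac{8}{g^2}-2\frac{g_s^2}{g^2}-2\frac{f_s^2}{f^2}-12\frac{f^2}{g^4}\Bigr\}g_s^2+8\frac{f}{g^3}f_s g_s.
\end{align*}

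For $f_s^2$ I would split the cross term as $16(f^3/g^5)f_s g_s=2\cdot(2(g_s/g)f_s)\cdot(4f^3/g^4)$ and apply $2|AB|\le A^2+B^2$, obtaining $16(f^3/g^5)f_s g_s\le 4(g_s^2/g^2)f_s^2+16 f^6/g^8$, which cancels the $-4(g_s^2/g^2)f_s^2$ contribution exactly. Using the preserved ordering $f\le g$ (Lemma~\ref{Berger}) to dominate $f^6/g^8\le f^2/g^4$ and combining with the remaining $-12(f^2/g^4)f_s^2$, the evolution at the max becomes
\[
(f_s^2)_t\le\frac{f^2}{g^4}\bigl(16-12\,f_s^2\bigr),
\]
which is non-positive whenever $f_s^2\ge 4/3=(2/\sqrt{3})^2$, proving $|f_s|\le C_f$. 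The analogous Peter--Paul split for $g_s^2$, namely $8(f/g^3)f_s g_s=2\cdot((f_s/f)g_s)\cdot(4f^2/g^3)\le 2(f_s/f)^2 g_s^2+8 f^4/g^6$, cancels the $-2(f_s/f)^2 g_s^2$ contribution; bounding $f^4/g^6\le 1/g^2$ again by $f\le g$ and discarding the nonpositive $-12(f^2/g^4)g_s^2$ yields
\[
(g_s^2)_t\le\frac{g_s^2}{g^2}\bigl(8-2\,g_s^2\bigr)+\frac{8}{g^2},
\]
which is non-positive once $g_s^2\ge 8=(2\sqrt{2})^2$, proving $|g_s|\le C_g$.

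The main obstacle is the cross-coupling $f_s g_s$: neither derivative is a priori bounded by the other, and the coefficients $f^3/g^5$ and $f/g^3$ of these cross terms could in principle diverge as $g\to 0$ near a forming singularity. The decisive structural fact is the preserved ordering $f\le g$, which lets the leftover $f^k/g^l$ powers produced by Peter--Paul be absorbed into the $1/g^2$ scale already present in the favorable dissipative reaction terms $-(g_s^2/g^2)f_s^2$ and $-(f_s/f)^2 g_s^2$. The persistent $8/g^2$ contribution in the $g_s^2$ estimate is in turn dominated by the quartic dissipation $-2 g_s^4/g^2$ precisely at the threshold $g_s^2=8$, which is what fixes the value $2\sqrt{2}$ in the statement.
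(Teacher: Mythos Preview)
Your proof is correct and follows essentially the same approach as the paper: apply the maximum principle to the derivative quantities, control the cross terms $f_s g_s$ by weighted Cauchy--Schwarz, and use the preserved ordering $f\le g$ to absorb the resulting $f^k/g^l$ powers. The only cosmetic differences are that the paper works with $(f_s)_{\max}$ and $(g_s)_{\max}$ directly rather than with the squares, and for the $g_s$ bound it uses a slightly different split (first replacing $4/g^2-g_s^2/g^2$ by $-4/g^2$ when $g_s^2\ge 8$, then estimating $4ff_s/g^3\le 4/g^2+f_s^2/f^2$), arriving at the same threshold $2\sqrt{2}$.
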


\begin{proof}
Consider $(f_s)_{\max}$, and assume that $(f_s)_{\max} \ge C > 0$ where $C$ is sufficiently large.
By Lemmas~\ref{C0pinching} and \ref{Berger} (in Appendix~\ref{Ansatz}), we have 
\begin{equation}
\label{eq-double-ineq}
f(\cdot,t) \le g(\cdot,t)\leq(1+C_0)f(\cdot,t)
\end{equation}  
for as long as a solution exists, where $C_0$ is the uniform constant in Lemma~\ref{C0pinching}.
Recalling the evolution equation~\eqref{f_s-evolution} for $f_s$ and
applying weighted Cauchy--Schwarz,\footnote{To wit, we estimate
$ab\leq\epsilon a^2+\frac{1}{4\epsilon}b^2$, with $a=\frac{g_s}{g}$, $b=\frac{f^3}{g^4}$, and
$\epsilon=\frac{1}{2\sqrt3}$.} we obtain
\begin{align*}
\frac{d}{dt} (f_s)_{\max}
&\le -(f_s)_{\max}\, \left(6 \frac{f^2}{g^4} + 2 \frac{g_s^2}{g^2}\right) + 8 \frac{f^3}{g^5} g_s \\
&\le-C\, \left(6 \frac{f^2}{g^4} + 2 \frac{g_s^2}{g^2}\right)  + \frac{4}{\sqrt{3}}\frac{g_s^2}{g^2}
+ \frac{12}{\sqrt{3}}\frac{f^6}{g^8} \\
&\le -C\, \left(6 \frac{f^2}{g^4} + 2 \frac{g_s^2}{g^2}\right)  + \frac{4}{\sqrt{3}}\frac{g_s^2}{g^2}
+ \frac{12}{\sqrt{3}}\frac{f^2}{g^4} \\
&\le 0,
\end{align*}
if we choose $C \ge 2/\sqrt3$. This implies that there is a sufficiently large constant $C$ such that
$(f_s)_{\max} \le C$ uniformly, as long as the flow exists. Similarly we also get a uniform bound
$(f_s)_{\min} \ge -C$.
\smallskip

We now consider $(g_s)_{\max}$. Suppose $(g_s)_{\max}\geq C_g\geq\sqrt8$. Then
\[
\frac{4}{g^2}-\frac{g_s^2}{g^2}\leq-\frac12\,\frac{g_s^2}{g^2}\leq-\frac{4}{g^2}.
\]
So it follows from the evolution equation~\eqref{g_s-evolution} for $g_s$ that
\[
\frac{d}{dt}(g_s)_{\max}\leq
-\left\{\frac{4}{g^2}+\frac{f_s^2}{f^2}\right\}(g_s)_{\max}+\frac{4ff_s}{g^3}.
\]
Using weighted Cauchy--Schwarz and the fact that the inequality $f\leq g$ is
preserved, we obtain
\[
\frac{4ff_s}{g^3}\leq\frac{4f^4}{g^6}+\frac{f_s^2}{f^2}\leq\frac{4}{g^2}+\frac{f_s^2}{f^2}.
\]
Hence at any sufficiently large value of $(g_s)_{\max}$, one has
\[
\frac{d}{dt}(g_s)_{\max}\leq0.
\]
A similar argument shows that $(g_s)_{\min}\geq-C_g$.
\end{proof}

\begin{corollary}	\label{BoundVertical}
If $f\leq g$ initially, then there exists $C$
depending only on the initial data such that the estimate
\[ |\kappa_{12}| + |\kappa_{31}| + |\kappa_{23}| \le \frac{C}{M^2}\]
holds for as long as a solution exists.
\end{corollary}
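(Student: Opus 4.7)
The plan is to substitute the uniform bounds obtained in Lemmas~\ref{C0pinching} and \ref{lem-der} directly into the explicit curvature formulas \eqref{Kurv12} and \eqref{Kurv23}. Because Lemma~\ref{C0pinching} gives $|f-g|\leq C_0 M$, the two warping functions are comparable: both $f$ and $g$ lie between $M$ and $(1+C_0)M$. Consequently each of the ratios
\[
\frac{f^2}{g^4},\quad \frac{1}{fg},\quad \frac{1}{g^2},\quad \frac{g^2-f^2}{g^4}
\]
is bounded above by a constant multiple of $1/M^2$, where the constant depends only on $C_0$.

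Next I would control the remaining $f_s,g_s$ terms. Since the initial metric satisfies $f\le g$, Lemma~\ref{lem-der} applies and yields uniform bounds $|f_s|\le C_f$ and $|g_s|\le C_g$ for as long as the solution exists, where the constants depend only on the initial data. Hence the products $\frac{f_sg_s}{fg}$ and $\frac{g_s^2}{g^2}$ appearing in \eqref{Kurv12} and \eqref{Kurv23} are also bounded by a constant multiple of $1/M^2$.

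Combining these observations, each term of \eqref{Kurv12} and \eqref{Kurv23} is pointwise dominated by $C/M^2$ for some constant $C$ depending only on the initial data. Since $\kappa_{12}=\kappa_{31}$ and there are only finitely many such terms, the triangle inequality gives
\[
|\kappa_{12}|+|\kappa_{31}|+|\kappa_{23}| \le \frac{C}{M^2}
\]
for as long as the solution exists.

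There is no real obstacle here: the corollary is a direct bookkeeping consequence of the preceding lemmas, which together provide both the $C^0$ pinching needed to make $1/f$ and $1/g$ comparable to $1/M$ and the $C^1$ bounds needed to dominate the terms involving first derivatives.
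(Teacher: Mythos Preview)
Your proof is correct and follows essentially the same approach as the paper's own proof, which simply recalls the curvature formulas and states that the bound follows immediately from Lemmas~\ref{C0pinching} and~\ref{lem-der}. You have merely spelled out the details of that immediate step.
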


\begin{proof}
Because
\[\kappa_{12} = \kappa_{31} = \frac{f^2}{g^4} - \frac{f_s g_s}{f g} \qquad \mbox{and} \qquad
\kappa_{23} = \frac{4}{g^2} - \frac{3f^2}{g^4} - \frac{g_s^2}{g^2},\]
the stated bound follows immediately from Lemma~\ref{C0pinching} and Lemma~\ref{lem-der}.
\end{proof}

\subsection{Evolution of second derivatives}
After further tedious but straightforward computations, one finds that $\kappa_{01}$ and
$\kappa_{02}$ evolve by
\begin{align}	\label{kappa01evolution}
 (\kappa_{01})_t&=\Delta(\kappa_{01})+2\kappa_{01}^2
 -4\left\{\frac{g_s^2}{g^2}+\frac{f^2}{g^4}\right\}\kappa_{01}
 +4\left\{\kappa_{12}+\frac{f^2}{g^4}\right\}\kappa_{02}\\
 &\qquad+12\frac{f_s^2}{g^4}+40\frac{f^2g_s^2}{g^6}-48\frac{ff_s g_s}{g^5}
 -4\frac{f_s g_s^3}{fg^3}\notag
\end{align}
and
\begin{align}	\label{kappa02evolution}
 (\kappa_{02})_t&=\Delta(\kappa_{02})+2\kappa_{02}^2 \\
&\qquad+ \left(\frac{4f^2}{g^4} - \frac{2f_s g_s}{f g}\right)\, \kappa_{01} 
+ \left(\frac{8}{g^2} - \frac{8f^2}{g^4} - \frac{2f_s^2}{f^2} - \frac{4g_s^2}{g^2}\right)\, \kappa_{02}\notag\\
&\qquad- \frac{4f_s^2}{g^4} + \frac{24 f f_s g_s}{g^5} - \frac{2f_s^3 g_s}{f^3 g} - \frac{24 f^2 g_s^2}{g^6} + \frac{8g_s^2}{g^4} - \frac{2g_s^4}{g^4},\notag
\end{align}
respectively. If $f=g$, these reduce to equation~(22) in \cite{AK04}, using the
identifications $K=-\kappa_{01}=-\kappa_{02}$ and $L=\kappa_{12}=\kappa_{23}$.

\medskip

It follows from  \cite{Ses05} that a singularity occurs at $T<\infty$ only if 
\[
\limsup_{t\nearrow T}\max_{s\in\mc S^1}|\Rc(s,t)\,|=\infty.
\]
We now show that all remaining curvatures are controlled by
$\dn M$ at a finite-time singularity.

\begin{lemma}	\label{BoundMixed}
Suppose that at time $t=0$, the metric satisfies $f\leq g$, and the scalar curvature satisfies
$\big\{\min_{\mc S^1\times\mc S^3}R\big\}\big\{\max_{\mc S^1\times\mc S^3}g^2\big\}>-3$.
If the norm of $\Rc$ becomes unbounded as $t\nearrow T<\infty$,  then $\dn M(T)=0$,
and there exists a uniform constant $C$ such that
\[
|\kappa_{01}| + |\kappa_{02}| + |\kappa_{03}| \leq\frac{C}{\dn M^2}.
\]
\end{lemma}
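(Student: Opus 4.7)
The lemma has two parts: $\dn M(T) = 0$ and the Type-I-style curvature bound. I would prove the bound first and then deduce $\dn M(T) = 0$ by contradiction. Since $\kappa_{02} = \kappa_{03}$ by \eqref{Kurv02}, only $|\kappa_{01}|$ and $|\kappa_{02}|$ need be controlled. By Lemmas~\ref{C0pinching} and \ref{Berger}, $f$ and $g$ are pointwise comparable to $M \geq \dn M$, so the desired estimate is equivalent to a uniform pointwise bound $f^2|\kappa_{01}| + g^2|\kappa_{02}| \leq C$. The weights $f^2, g^2$ are natural here: under Type-I rescaling of the metric, $f^2\kappa_{01}$ and $g^2\kappa_{02}$ are scale invariant.

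To establish this weighted bound I would apply the parabolic maximum principle to a composite
\[
\Phi := f^4\kappa_{01}^2 + A\,g^4\kappa_{02}^2 + B,
\]
for constants $A,B$ to be chosen. Using \eqref{kappa01evolution}--\eqref{kappa02evolution}, the commutator \eqref{commute}, and the evolution equations \eqref{SRFsystem}, I would compute $(\partial_t-\Delta)\Phi$. The main enemy is the pair of positive reaction contributions $+2f^4\kappa_{01}^3$ and $+2A\,g^4\kappa_{02}^3$ inherited from the $+2\kappa_{01}^2$ and $+2\kappa_{02}^2$ terms. These must be matched against: the negative damping terms $-4(g_s^2/g^2+f^2/g^4)\kappa_{01}$ and their analogues already present in \eqref{kappa01evolution}--\eqref{kappa02evolution}; the time derivatives of the weights $f^4, g^4$, which via $f_t$ and $g_t$ produce additional reaction-absorbing contributions of the form $-8(f^6/g^4)\kappa_{01}^2$; and the diffusive gradient contribution $8|\partial_s(f^2\kappa_{01})|^2$ arising in $\Delta(f^4\kappa_{01}^2)$. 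Weighted Cauchy--Schwarz applied to the remaining cross terms (in the spirit of Lemmas~\ref{lem-der} and \ref{lem-lower}), together with Corollary~\ref{BoundVertical}, the pinching $f \sim g$, and the $C^1$ bounds of Lemma~\ref{lem-der}, then allow $A$ to be tuned so that $(\partial_t-\Delta)\Phi \leq 0$ at any spatial maximum where $\Phi$ exceeds a threshold depending only on initial data. The maximum principle yields $\Phi \leq C$ uniformly in time, giving the desired estimate.

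With the curvature bound in hand, $\dn M(T) = 0$ follows by contradiction. If $\dn M(T) > 0$, then $f$ and $g$ are uniformly bounded below (by $\dn M(T) > 0$) and above (since $g_{\max}$ is non-increasing when $f \leq g$, as observed inside the proof of Lemma~\ref{lem-lower}). Then Lemma~\ref{lem-der} gives uniform $C^1$ bounds on $f$ and $g$; Corollary~\ref{BoundVertical} bounds the vertical sectional curvatures; and the estimate just proved bounds the mixed ones. Hence $|\Rc|$ is uniformly bounded on $[0,T)$, contradicting the hypothesis. The principal obstacle is the first step: under Type-I scaling $f \sim \sqrt{T-t}$, $\kappa_{01} \sim 1/(T-t)$, the reaction terms $+2\kappa_{01}^2$ saturate the damping terms up to order-one constants, so the construction of $\Phi$ and the calibration of $A$ require an exact-coefficient computation rather than a soft estimate.
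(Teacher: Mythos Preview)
Your approach is genuinely different from the paper's. The cubic cancellation you rely on is real: the $+2\kappa_{01}^2$ reaction in \eqref{kappa01evolution}, after multiplication by $2f^4\kappa_{01}$, produces $+4f^4\kappa_{01}^3$, which is exactly offset by $4f^3 f_{ss}\kappa_{01}^2=-4f^4\kappa_{01}^3$ coming from the time derivative of the weight; an analogous cancellation occurs for $g^4\kappa_{02}^2$, and the net zero-order damping for that piece works out to $-8f^2\kappa_{02}^2$ regardless of the ratio $g/f$. So the scheme is not hopeless. The risk is in the cross terms: each of \eqref{kappa01evolution} and \eqref{kappa02evolution} couples $\kappa_{01}$ to $\kappa_{02}$ with coefficients of size $C/M^2$, where $C$ depends on the eccentricity constant $C_0$ of Lemma~\ref{C0pinching}. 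After multiplication by the weights these contribute $\sim (1+A)\,C f^2|\kappa_{01}\kappa_{02}|$ to $(\partial_t-\Delta)\Phi$, and absorbing them via Cauchy--Schwarz into the available damping $-c f^2\kappa_{01}^2-8A f^2\kappa_{02}^2$ forces both a lower and an upper bound on $A$. Whether these are compatible when $C_0$ is large is exactly the ``exact-coefficient computation'' you flag but do not carry out; until it is, this is a plan rather than a proof. Note also that success would give the stronger pointwise bound $|\kappa_{01}|+|\kappa_{02}|\leq C/M^2$ rather than the paper's $C/\dn M^2$, and would not use the scalar-curvature hypothesis at all---both reasons for caution.

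The paper sidesteps the balancing problem with a different barrier. It observes that
\[
(\partial_t-\Delta)\Big(\frac{f_s^2}{f^2}\Big)\leq -\kappa_{01}^2+\frac{C}{M^4},
\qquad
(\partial_t-\Delta)\Big(\frac{g_s^2}{g^2}\Big)\leq -\kappa_{02}^2+\frac{C}{M^4},
\]
and sets $K^*:=\kappa_{01}+a\,f_s^2/f^2+b\,g_s^2/g^2$. For $a,b$ large---with no competing upper bound---the $-a\kappa_{01}^2-b\kappa_{02}^2$ contributions swallow all quadratic reaction and cross terms, leaving $(K^*)_t\leq\Delta K^*+C/M^4$. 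This already shows $\kappa_{01}$ cannot blow up while $\dn M$ stays positive, hence $\dn M(T)=0$; the companion $K_*=\kappa_{01}-c\,g_s^2/g^2$ handles the lower bound, and $\kappa_{02}$ is treated identically. Only then does the paper invoke the two-sided estimate $\dn M^2\sim T-t$ from Lemmas~\ref{lem-upper} and~\ref{lem-lower} (this is where the scalar-curvature hypothesis enters) to integrate $\frac{d}{dt}(K^*)_{\max}\leq C/(T-t)^2$ and obtain $|\kappa_{01}|\leq C/(T-t)\leq C/\dn M^2$. The trade is clear: the paper accepts a weaker differential inequality with a $C/M^4$ remainder in exchange for unconstrained $a,b$, and pays for it with a time-integration step requiring Lemma~\ref{lem-lower}.
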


\begin{proof}
Corollary~\ref{BoundVertical} bounds the sectional curvatures of vertical planes  by $C/M^2$.
So it remains only to consider the mixed curvatures $\kappa_{01}$ and $\kappa_{02}=\kappa_{03}$.

\smallskip

To control $\kappa_{01}$ from above, we work with
$K^*:=\kappa_{01}+a\,f_s^2/f^2+b\,g_s^2/g^2$,
where $a$ and $b$ are positive constants to be chosen. 
Clearly, it follows from this definition that $K^*$ is an upper bound for $\kappa_{01}$.
To derive an estimate for the time derivative of $K^*$, we begin by estimating
the time derivative of $\kappa_{01}$.  Applying  Lemma~\ref{C0pinching}, Lemma~\ref{lem-der},
and the Cauchy--Schwarz inequality to equation~\eqref{kappa01evolution}, we obtain
\begin{align*}
(\kappa_{01})_t&\leq\Delta(\kappa_{01})+2\kappa_{01}^2
+C\left\{\frac{\kappa_{01}}{M^2}+\frac{\kappa_{02}}{M^2}+\frac{1}{M^4}\right\}\\
&\leq\Delta(\kappa_{01})+C\left\{\kappa_{01}^2+\kappa_{02}^2+\frac{1}{M^4}\right\}.
\end{align*}
We next calculate the time derivatives of the quadratic terms in $K^*$, obtaining 
\begin{align*}
\left(\frac{f_s^2}{f^2}\right)_t
&= \Delta \left(\frac{f_s^2}{f^2}\right) - \frac{8f_s^2}{g^4} - \frac{4f_s^2}{f^4}
+ \frac{16 f f_s g_s}{g^5} - \frac{4 f_s^2 g_s^2}{f^2 g^2} - \frac{4 f_s^2}{f^2} \kappa_{01} - 2\kappa_{01}^2, \\
\left(\frac{g_s^2}{g^2}\right)_t
&= \Delta\left(\frac{g_s^2}{g^2}\right) + \frac{8 f f_s g_s}{g^5} 
- \frac{16f^2 g_s^2}{g^6} + \frac{16 g_s^2}{g^4}- \frac{2f_s^2 g_s^2}{f^2 g^2} - \frac{6g_s^4}{g^4} - \frac{4g_s^2}{g^2}\kappa_{02} - 2\kappa_{02}^2.
\end{align*}
Again applying Lemma~\ref{C0pinching}, Lemma~\ref{lem-der}, and weighted 
Cauchy--Schwarz, we get
\begin{align*}
\left(\frac{f_s^2}{f^2}\right)_t
&\leq\Delta \left(\frac{f_s^2}{f^2}\right) -\kappa_{01}^2+\frac{C}{M^4},\\
\left(\frac{g_s^2}{g^2}\right)_t
&\leq \Delta\left(\frac{g_s^2}{g^2}\right)-\kappa_{02}^2+\frac{C}{M^4}.
\end{align*}
It immediately follows that for $a,b$ chosen large enough, one has
\[
K^*_t\leq\Delta K^*+\frac{C}{M^4},
\]
and hence
\begin{equation}
\label{ddtK*max}
\frac{d}{dt}(K^*)_{\max}\leq\frac{C}{\dn M^4}.
\end{equation}
This inequality, together with the mean value theorem, imply   that $K^*$ and hence $\kappa_{01}$
cannot approach $+\infty$ on any time interval on which $\dn M$ is bounded
away from zero.

To control $\kappa_{01}$ from below, we work with $K_*:=\kappa_{01}-c\,g_s^2/g^2$,
where $c$ is a positive constant to be chosen. This quantity clearly serves as a lower
bound for $\kappa_{01}$. Calculating as above, we obtain the estimate
\[
(\kappa_{01})_t\geq\Delta(\kappa_{01})-C\left\{\kappa_{02}^2+\frac{1}{M^4}\right\}.
\]
Combining this with the inequality derived above for $\,g_s^2/g^2$, we see that 
for $c$ chosen large enough, one has $(K_*)_t\geq\Delta K_*-C/M^4$,
and hence
\begin{equation}
\label{ddtK*min}
\frac{d}{dt}(K_*)_{\min} \geq-\frac{C}{\dn M^4}.
\end{equation}
It follows that $K_*$, and hence $\kappa_{01}$, cannot approach $-\infty$ on any
time interval on which $\dn M$ is bounded away from zero. Combining this result with
that obtained above, we see that $\kappa_{01}$ becomes singular at a finite time $T$ only if  $\dn M=0.$ 

To determine the specific relation between $\kappa_{01}$ and $\dn M$, we combine the estimates
for $\dn M$ obtained in Lemma ~\ref{lem-upper} and Lemma~\ref{lem-lower} with 
estimates~\eqref{ddtK*max} and \eqref{ddtK*min}, thereby obtaining 
\[
\frac{d}{dt}K^*_{\max}\leq\frac{C}{(T-t)^2}\qquad\mbox{and}\qquad
\frac{d}{dt}(K_*)_{\min}\geq-\frac{C}{(T-t)^2}.
\]
Integrating these inequalities leads to the estimate $|\kappa_{01}|\leq C_1+C/(T-t)$.
Then applying Lemma~\ref{lem-upper} again, we get the desired control  on $\kappa_{01}$, which is
\[
|\kappa_{01}|\leq\frac{C}{\dn M^2}.
\]

\smallskip
The estimate for $|\kappa_{02}|$ is obtained similarly, using
$\kappa_{02}+a\,f_s^2/f^2+b\,g_s^2/g^2$ for an upper bound,
and $\kappa_{02} -c\,f_s^2/f^2$ for a lower bound.
\end{proof}

\section{Analysis of singularities}	\label{Singular}
In this section, we study solutions of Ricci flow satisfying
Assumption~\ref{MildAssumption}, as stated in the introduction.

\begin{remark}	\label{NotVacuous}
To see that Assumption~\ref{MildAssumption} is not vacuous, it suffices to observe that initial data
with $f\leq g$ both constant have strictly positive constant scalar curvature $R=(4g^2-f^2)/g^4$.
So there is a neighborhood of these products in the space of metrics $\mathfrak{Met}(\mc S^1\times\mc S^3)$
such that all warped Berger solutions originating from initial data in this neighborhood satisfy
the first two hypotheses of the Assumption and also become singular in finite time. The last fact
follows from the standard estimate $R_{\max}(t)\geq\big( [R_{\max}(0)]^{-1}-t/2 \big)^{-1}$. By
Lemma~\ref{C0pinching}, these solutions will develop finite-time (global or local) singularities
and will satisfy $f=g$ (hence ``become round")  at all points where $M=\dn M=0$.
\end{remark}

\begin{remark}	\label{Local/Nonlocal}
It is expected that open sets of warped Berger solutions will encounter global singularities in
which the geometry shrinks uniformly around the $\mc S^1$ factor; for example, this is expected
for solutions originating from initial data sufficiently near the products described in Remark~\ref{NotVacuous}.
On the other hand, we show in Appendix~\ref{InitialData} below that there exist open sets of warped
Berger solutions that develop local neckpinch singularities. Unless otherwise stated, the results
in this paper apply to both cases.
\end{remark}

It is clear from Corollary~\ref{BoundVertical} and the proof of Lemma~\ref{BoundMixed} that
the singular set $\Sigma$, i.e.~the set of points $\{\xi\}\times\mc S^3\subseteq\mc S^1\times\mc S^3$
such that $\limsup_{t\nearrow T}|\Rc(\xi,t)\,|=\infty$, coincides with the set $\Sigma_0$ of points such
that $M(\xi,t)\searrow0$ as $t\nearrow T$. Moreover, Lemma~\ref{lem-lower} shows that the singularity
is Type-I. It therefore follows from  \cite{EMT11} that  $\Sigma=\Sigma_R$, where $\Sigma_R$ denotes
the set of points at which the scalar curvature blows up at the Type-I rate as $t\nearrow T$.
\medskip

Our first observation is that the solution has a well-defined profile at the singular time.

\begin{lemma}	\label{UniformlyLipschitz}
If  a solution $\big(\mc S^1\times\mc S^3,G(t)\big)$ of Ricci flow satisfies Assumption~\ref{MildAssumption}
and becomes singular at $T<\infty$, then the limits $\lim_{t\nearrow T}f(\xi,t)$ and $\lim_{t\nearrow T}g(\xi,t)$
both exist for all $\xi\in\mc S^1$.
\end{lemma}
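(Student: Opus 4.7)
The plan is to combine the identifications $\Sigma=\Sigma_R=\Sigma_0$ recalled immediately before the statement with the geometric form of the flow equations at the end of Section~\ref{floweq}:
\begin{align*}
(\log f)_t &= -\kappa_{01}-2\kappa_{12},\\
(\log g)_t &= -\kappa_{02}-\kappa_{23}-\kappa_{31}.
\end{align*}
Since $f\le g$ is preserved (Lemma~\ref{Berger}) and $g\le(1+C_0)f$ by Lemma~\ref{C0pinching}, the quantities $M$, $f$, and $g$ are pointwise comparable throughout the flow. I would then argue by a dichotomy on $\xi\in\mc S^1$.

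If $\xi\in\Sigma=\Sigma_0$, then by definition $M(\xi,t)\searrow 0$, so $f(\xi,t)=M(\xi,t)\to 0$, and $g(\xi,t)\le(1+C_0)f(\xi,t)\to 0$; both limits exist and vanish. If instead $\xi\in\mc S^1\setminus\Sigma$, then $|\Rc(\xi,t)|$ stays bounded as $t\nearrow T$, and (using continuity of $M(\xi,\cdot)$) $\liminf_{t\nearrow T}M(\xi,t)>0$, so there exist $c,\delta>0$ with $M(\xi,t)\ge c$ on $[T-\delta,T)$. The explicit formulas \eqref{Kurv12}--\eqref{Kurv23}, this lower bound, and the uniform bounds $|f_s|,|g_s|\le C$ from Lemma~\ref{lem-der} then give pointwise bounds on $\kappa_{12}(\xi,\cdot)$ and $\kappa_{23}(\xi,\cdot)$ on $[T-\delta,T)$. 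Using the Ricci identities $R_{11}=\kappa_{01}+2\kappa_{12}$ and $R_{22}=\kappa_{02}+\kappa_{12}+\kappa_{23}$ together with the pointwise boundedness of $|\Rc(\xi,\cdot)|$, I can solve for $\kappa_{01}(\xi,\cdot)$ and $\kappa_{02}(\xi,\cdot)$ and conclude all four independent sectional curvatures are bounded at $\xi$ on $[T-\delta,T)$. Feeding these bounds into the geometric evolution equations above yields uniform bounds on $|(\log f)_t(\xi,\cdot)|$ and $|(\log g)_t(\xi,\cdot)|$, so $\log f(\xi,\cdot)$ and $\log g(\xi,\cdot)$ are Lipschitz on $[T-\delta,T)$ and the desired limits follow by Cauchy.

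The main obstacle is the second case. A bound on $|\Rc|$ alone cannot directly control all four independent sectional curvatures of a warped Berger geometry, since only three Ricci eigenvalues are independent (as $R_{22}=R_{33}$). The essential additional input is the lower bound on $M(\xi,\cdot)$, which first peels off the vertical curvatures $\kappa_{12}$, $\kappa_{23}$ via their explicit formulas; only then can the mixed curvatures $\kappa_{01}$, $\kappa_{02}$ be recovered from the Ricci components. A smaller technical point is confirming that $\xi\notin\Sigma_0$ forces $\liminf_{t\nearrow T}M(\xi,t)>0$ rather than permitting $M(\xi,\cdot)$ to oscillate down to $0$; this rests on the spatial regularity of the flow and the continuity of $M(\xi,\cdot)$ in $t$.
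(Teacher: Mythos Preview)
Your approach is genuinely different from the paper's. The paper does not argue by dichotomy on $\Sigma$ at all; instead it computes
\[
(f^2)_t = 2ff_{ss} + 4\tfrac{f}{g}f_s g_s - 4\tfrac{f^4}{g^4},
\qquad
(g^2)_t = 2gg_{ss} + 2\big(\tfrac{g}{f}f_s g_s + g_s^2\big) + 4\tfrac{f^2}{g^2} - 8,
\]
and bounds both right-hand sides uniformly via Lemmas~\ref{C0pinching}, \ref{lem-der}, and \ref{BoundMixed}, so that $f^2$ and $g^2$ are \emph{uniformly} Lipschitz in $t$. This is stronger than pointwise existence of limits and is exactly what Corollary~\ref{NotTooFast} and the construction in Appendix~\ref{InitialData} use.

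Your argument as written has a real gap in the case $\xi\notin\Sigma$. From $\xi\notin\Sigma_0$ you only know $M(\xi,t)\not\to 0$, i.e.\ $\limsup_{t\nearrow T}M(\xi,t)>0$; this does \emph{not} yield $\liminf_{t\nearrow T}M(\xi,t)>0$, and continuity of $t\mapsto M(\xi,t)$ on $[0,T)$ is no obstruction to oscillation down to zero. You flag this yourself, but ``spatial regularity and continuity'' is not a proof, and without the lower bound on $M$ your route through formulas~\eqref{Kurv12}--\eqref{Kurv23} to isolate $\kappa_{01},\kappa_{02}$ collapses. The detour is in fact unnecessary: the right-hand sides $-(\kappa_{01}+2\kappa_{12})$ and $-(\kappa_{02}+\kappa_{12}+\kappa_{23})$ of the geometric evolution equations you quote are already (minus) eigenvalues of the Ricci endomorphism in the Milnor frame, so $|(\log f)_t|+|(\log g)_t|\le C\,|\Rc|$ holds pointwise with no reference to individual sectional curvatures or to $M$. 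For $\xi\notin\Sigma$ this makes $\log f(\xi,\cdot)$ and $\log g(\xi,\cdot)$ Lipschitz directly from the definition of $\Sigma$, repairing your Case~2. Even so, this fix yields only a $\xi$-dependent Lipschitz constant --- enough for the lemma as stated, but not for the uniform estimate the paper extracts and subsequently uses.
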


\begin{proof}
We observe that
\[
(f^2)_t=2ff_{ss}+4\frac{f}{g}f_s g_s-4\frac{f^4}{g^4}
\]
and
\[
(g^2)_t=2gg_{ss}+2\left(\frac{g}{f}f_s g_s+g_s^2\right)+4\frac{f^2}{g^2}-8.
\]
It thus follows from Lemmas~\ref{C0pinching}, \ref{lem-der}, and \ref{BoundMixed} that there is
a uniform constant $C$ such that
\[
|(f^2)_t|\leq C\qquad\mbox{and}\qquad|(g^2)_t|\leq C.
\]
Consequently both $f^2$ and $g^2$ are uniformly Lipschitz-continuous functions of time.
\end{proof}

\begin{corollary}	
\label{NotTooFast}
If a solution $\big(\mc S^1\times\mc S^3,G(t)\big)$ of Ricci flow satisfies Assumption~\ref{MildAssumption}
and becomes singular at $T<\infty$,  then there exists a uniform constant $C$ such that
\[
f^2(s,t)\geq f^2(s,0)-Ct\qquad\mbox{and}\qquad
g^2(s,t)\geq g^2(s,0)-Ct.
\]
\end{corollary}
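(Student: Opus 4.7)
The plan is to extract this corollary directly from the quantitative estimate established inside the proof of Lemma~\ref{UniformlyLipschitz}. That lemma shows that along any Ricci flow solution satisfying Assumption~\ref{MildAssumption}, both $f^2$ and $g^2$ are Lipschitz in time, but more importantly the proof actually produces a \emph{uniform} constant $C$ (depending only on the initial data and the singular time $T$) such that $|(f^2)_t|\leq C$ and $|(g^2)_t|\leq C$ pointwise in space and time. This uniformity is what we need; the bound itself comes from combining the evolution equations for $f^2$ and $g^2$ with the $C^0$ control from Lemma~\ref{C0pinching}, the first-derivative control from Lemma~\ref{lem-der}, and the bounds on mixed curvatures (which control $f_{ss}$ and $g_{ss}$) from Lemma~\ref{BoundMixed}.

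Once these pointwise time-derivative bounds are in hand, the corollary is immediate: for any fixed $s\in\mc S^1$, the functions $t\mapsto f^2(s,t)$ and $t\mapsto g^2(s,t)$ are absolutely continuous on $[0,T)$, so the fundamental theorem of calculus gives
\[
f^2(s,t)-f^2(s,0)=\int_0^t (f^2)_t(s,\tau)\,d\tau\geq -Ct,
\]
and similarly for $g^2$. Rearranging yields the two stated inequalities.

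The only subtlety worth flagging is that we are using a \emph{one-sided} consequence of the Lipschitz bound, so strictly speaking we only need the lower bound $(f^2)_t\geq -C$ and $(g^2)_t\geq -C$; both follow from $|(f^2)_t|\leq C$ and $|(g^2)_t|\leq C$ already proved. There is no real obstacle here: all the heavy lifting has been done in Lemmas~\ref{C0pinching}, \ref{lem-der}, \ref{BoundMixed}, and \ref{UniformlyLipschitz}, and the corollary is essentially a restatement of the Lipschitz-in-time conclusion in integrated form.
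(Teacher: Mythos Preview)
Your proposal is correct and matches the paper's approach exactly: the paper states this result as an immediate corollary of Lemma~\ref{UniformlyLipschitz} with no separate proof, since the pointwise bound $|(f^2)_t|\leq C$ (and likewise for $g^2$) established there integrates directly to the stated inequalities. The only minor caveat is that the ``fixed $s$'' in your integration should be understood as a fixed underlying point $\xi\in\mc S^1$ (since the arclength coordinate $s(\xi,t)$ itself depends on time), but the paper's own statement uses the same notational shorthand, and the time derivative $(f^2)_t$ computed in the proof of Lemma~\ref{UniformlyLipschitz} is indeed taken with $\xi$ held fixed.
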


Our next observation concerns the diameter of a Ricci flow solution that satisfies
Assumption~\ref{MildAssumption} but does \emph{not} encounter a global singularity.
(We construct initial data leading to such solutions in Appendix~\ref{InitialData}.) Here we
prove a diameter bound for such solutions.
\begin{lemma}	
\label{DiameterBound}
Suppose a solution $\big(\mc S^1\times\mc S^3,G(t)\big)$ of Ricci flow satisfies
Assumption~\ref{MildAssumption} and becomes singular at $T<\infty$.
If the singular set $\Sigma\neq\mc S^1\times\mc S^3$, then the diameter of the solution remains
bounded as $t\nearrow T$.
\end{lemma}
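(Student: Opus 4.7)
The argument reduces the diameter bound to a length bound on the base $\mc S^1$. By Lemma~\ref{C0pinching} combined with the non-increase of $g_{\max}(t)$ established in the proof of Lemma~\ref{lem-lower}, both $f$ and $g$ are uniformly bounded above, so each fiber $\{\xi\}\times\mc S^3$ has Berger diameter at most a universal constant $C$. Hence the diameter of the total space is controlled by $L(t)/2+C$, where $L(t):=\int_{\mc S^1}ds$ denotes the circumference of the base, and it suffices to bound $L(t)$ uniformly for $t<T$.

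Using the commutator formula~\eqref{commute} and the periodicity of $(\log f)_s$ and $(\log g)_s$ on the closed curve $\mc S^1$, integration by parts yields
\[
L'(t)=-\int_{\mc S^1}(\kappa_{01}+2\kappa_{02})\,ds=\int_{\mc S^1}\left(\frac{f_s^2}{f^2}+2\frac{g_s^2}{g^2}\right)ds\geq 0,
\]
so $L$ is monotonically increasing. To estimate the integrand, I would partition $\mc S^1$ into the maximal arcs on which $f$ is monotone. On any such arc where $f$ takes values in $[a,b]$, the substitution $u=f(s)$ together with the bound $|f_s|\leq C_f$ from Lemma~\ref{lem-der} gives
\[
\int\frac{f_s^2}{f^2}\,ds=\int_a^b\frac{|f_s|}{u^2}\,du\leq\frac{C_f}{a}\leq\frac{C_f}{\dn M(t)}\leq\frac{C}{\sqrt{T-t}},
\]
where the penultimate step uses $a\geq\dn M(t)$ and the last step invokes Lemma~\ref{lem-lower}. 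The analogous bound holds for $g$.

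To close the estimate on $L'(t)$, I would apply Angenent's Sturmian theorem \cite{Ang88}, in the semilinear form already used in the proof of Lemma~\ref{lem-upper}, to $v=f-c$: for each fixed constant $c$, the function $v$ satisfies a parabolic equation of the type $v_t=v_{ss}+F(s,t,v,v_s)$, so the zero count $Z(f(\cdot,t)-c)$ is non-increasing in $t$. For Morse functions on $\mc S^1$, the number of critical points equals $\sup_c Z(f-c)$ (taken over regular values strictly between the largest local minimum and the smallest local maximum), so the number of monotone arcs of $f(\cdot,t)$ is bounded above by its initial count $N_0<\infty$, and similarly for $g$. Summing yields $L'(t)\leq CN_0/\sqrt{T-t}$, and integration from $0$ to $T$ gives the uniform bound $L(t)\leq L(0)+2CN_0\sqrt{T}<\infty$.

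The main delicacy lies in the Sturmian step: the evolution equation~\eqref{f_s-evolution} for $f_s$ itself is not homogeneous in $f_s$ (the term $8f^3g_s/g^5$ obstructs direct application), which is why I pass to the level-set equation for $f-c$ and exploit the topological identification of the critical-point count with the supremum of level-set intersection numbers. I note in passing that the assumption $\Sigma\neq\mc S^1\times\mc S^3$ plays no role in the argument above, suggesting that the diameter bound in fact also covers the global-singularity case.
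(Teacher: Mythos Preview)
Your argument follows the paper's route: differentiate arclength along the base, integrate by parts to obtain $\int(f_s^2/f^2+2g_s^2/g^2)\,ds$, and bound this integral by $C/\sqrt{T-t}$ via Lemma~\ref{lem-der}, a Sturmian bound on the number of monotone arcs, and Lemma~\ref{lem-lower}. The paper works with the arc between two fixed points $\xi_1,\xi_2\notin\Sigma$ and must separately control the boundary term $\big(f_s/f+2g_s/g\big)\big|_{s_1}^{s_2}$ using Lemma~\ref{UniformlyLipschitz} at those non-singular endpoints; you instead integrate over the full circle, so the boundary term vanishes by periodicity. Your remark that the hypothesis $\Sigma\neq\mc S^1\times\mc S^3$ then plays no role is correct, and this is a slightly cleaner packaging of the same estimate.

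One caveat on your Sturmian justification. Angenent's theorem in \cite{Ang88} applies to linear equations $v_t=a\,v_{ss}+b\,v_s+c\,v$; it does \emph{not} assert non-increase of the zero count for an arbitrary semilinear equation $v_t=v_{ss}+F(s,t,v,v_s)$ unless $v\equiv0$ is itself a solution. For $v=f-c$ one finds $v_t=v_{ss}+2(g_s/g)\,v_s-2(v+c)^3/g^4$, whose inhomogeneous contribution $-2c^3/g^4$ at $v=0$ obstructs the argument (constants are not solutions of the $f$-equation, so the intersection-comparison principle with them is unavailable). The paper invokes the same critical-point bound without supplying a proof, so you are not behind the paper here; but the level-set mechanism you describe is not a correct justification of that step.
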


\begin{proof}
Let $\xi_1,\xi_2\in\mc S^1\times\mc S^3\setminus\Sigma$ be arbitrary points. 
Let $s_1=s(\xi_1,t)$ and $s_2=s(\xi_2,t)$.
The quantity $\rho=ds/d\xi$ evolves by equation~\eqref{DistanceEvolution} (derived in Appendix~\ref{Ansatz}).
Using this along with formulas \eqref{Kurv01}--\eqref{Kurv02} for the curvatures and integrating by parts,
we calculate that
\begin{align*}
\frac{d}{dt}{\rm dist}_{G(t)}(\xi_1,\xi_2)
&=\frac{d}{dt}\int_{\xi_1}^{\xi_2}\rho(\xi,t)\,\mr d\xi\\
&=\int_{s_1}^{s_2}\left(\frac{f_{ss}}{f}+2\frac{g_{ss}}{g}\right)\mr ds\\
&=\left.\left(\frac{f_s}{f}+2\frac{g_s}{g}\right)\right|_{s_1}^{s_2}
+\int_{s_1}^{s_2}\left(\frac{f_s^2}{f^2}+2\frac{g_s^2}{g^2}\right)\mr ds.
\end{align*}
Since $\xi_1,\xi_2\notin\Sigma$, it follows from Lemma~\ref{UniformlyLipschitz}
that there exists $c>0$ such that $f(\xi_i,T)\geq c$ and $g(\xi_i,T)\geq c$ for $i=1,2$.
Combining this with the derivative bounds of Lemma~\ref{lem-der}, we obtain the bound
\[
\left|\left.\left(\frac{f_s}{f}+2\frac{g_s}{g}\right)\right|_{s_1}^{s_2}\right|\leq C.
\]
Again using Lemma~\ref{lem-der}, we bound the integral above by
\begin{align*}
\int_{s_1}^{s_2}\left(\frac{f_s^2}{f^2}+2\frac{g_s^2}{g^2}\right)\mr ds
&\leq C\int_{s_1}^{s_2}\left(\frac{|f_s|}{f^2}+\frac{|g_s|}{g^2}\right)\mr ds\\
&\leq C' C\left(\frac{1}{\min\{f(\cdot,t),\,g(\cdot,t)\}}-\frac{1}{\max\{f(\cdot,t),\,g(\cdot,t)\}}\right).
\end{align*}
To get the final estimate above, we work separately on each interval on which $f$ or $g$
is monotone increasing/decreasing. Because the Sturmian theorem \cite{Ang88}
applied to $f$ and $g$ implies that the number of their critical points cannot increase with time,
we have an \emph{a priori} bound on the number $C'$ of such intervals.

Combining the estimates above and using Lemma~\ref{lem-lower} shows that
\[
\left|\frac{d}{dt}{\rm dist}_{G(t)}(\xi_1,\xi_2)\right|\leq\frac{C}{\dn M}\leq\frac{C}{\sqrt{T-t}},
\]
which is integrable.
\end{proof}

\section{Sharper estimates}	\label{SharperEstimates}

In this section, we obtain stronger results under the more restrictive hypotheses on the initial
data detailed in Assumption~\ref{StrongerAssumption} in the introduction, with the goal of
breaking scaling invariance. It follows easily from Remark~\ref{NotVacuous} above that
Assumption~\ref{StrongerAssumption} is not vacuous. Furthermore, Remark~\ref{WeCanDoIt}
below shows that the assumption is satisfied by an open set of warped Berger solutions that
develop local singularities.
\smallskip


Our first result shows that solutions originating from original data that are not
too far from round become asymptotically round near their singular sets at a
rate that breaks scale invariance, hence that improves upon the scale-invariant
$C^0$ estimate of Lemma~\ref{C0pinching}.

\begin{lemma}
\label{lem-sharper-est}
If a solution $\big(\mc S^1\times\mc S^3,G(t)\big)$ satisfies Assumption~\ref{StrongerAssumption},
then there exists a uniform constant $C$ such that for as long as the flow exists, one has
$0<\frac{1}{f}-\frac{1}{g}\leq C$ and hence
\begin{equation}
\label{g-festim}
0<g - f \le C M^2.
\end{equation} 
\end{lemma}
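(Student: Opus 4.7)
The plan is to apply the parabolic maximum principle to the quantity $w := 1/f - 1/g = (g-f)/(fg)$. Once the uniform upper bound $w \leq C$ is obtained, the corollary $g - f \leq CM^2$ is immediate: from $g - f = fgw$ and Lemma~\ref{C0pinching}, one has $fg \leq (1+C_0)f^2 \leq (1+C_0)M^2$. The strict positivity $w > 0$ will follow from the preserved ordering $f \leq g$ (Lemma~\ref{Berger}) together with a strong-maximum-principle argument applied wherever $f < g$ initially.

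A tedious but straightforward computation from the system~\eqref{SRFsystem}, using \eqref{DefineLaplacian} to convert $w_{ss}$ into $\Delta w$ and the factorization $f/g + f^2/g^2 - 2 = -(g-f)(f+2g)/g^2$ for the reaction terms, produces the evolution equation
\[
w_t = \Delta w - \frac{f_s^2}{f^3} + \frac{g_s^2}{g^3} - \frac{2f(f+2g)}{g^4}\, w.
\]
The reaction coefficient has the favorable sign for an upper bound, but the gradient term $g_s^2/g^3$ has the wrong sign and a priori blows up like $1/g^3$ near the singularity, so this equation alone is insufficient. The key observation, available only at a spatial maximum of $w$, is that $w_s = 0$ forces $g_s = (g^2/f^2) f_s$, and hence
\[
-\frac{f_s^2}{f^3} + \frac{g_s^2}{g^3} = \left(\frac{g}{f} - 1\right)\frac{f_s^2}{f^3} = gw \cdot \frac{f_s^2}{f^3}.
\]
Combining with the linear term yields, at the spatial maximum,
\[
w_t \leq w\left[\frac{g f_s^2}{f^3} - \frac{2f(f+2g)}{g^4}\right],
\]
and the bracket is non-positive exactly when $f_s^2 \leq 2(f/g)^5 + 4(f/g)^4$.

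To exploit this, I would first establish that Assumption~\ref{StrongerAssumption}(1) --- namely $f \geq (1-\ve) g$ --- is preserved along the flow. Setting $\phi := f/g - (1-\ve)$, a parallel computation yields
\[
\phi_t = \Delta \phi - \frac{(f_s - (f/g)\,g_s)^2}{fg} + \frac{4f(g^2 - f^2)}{g^5}.
\]
At any spatial minimum of $\phi$ the condition $\phi_s = 0$ forces $f_s = (f/g)\,g_s$, so the gradient-squared term vanishes, while the reaction term is non-negative because $f \leq g$. Hence $\min_s \phi(\cdot, t) \geq 0$ is preserved, giving $f/g \geq 1 - \ve$ throughout the flow. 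By Lemma~\ref{lem-der}, the initial bound $|f_s| \leq 1$ from Assumption~\ref{StrongerAssumption}(2) yields $C_f = 2/\sqrt{3}$ and hence $f_s^2 \leq 4/3$, while Assumption~\ref{StrongerAssumption}(1) guarantees $2(f/g)^5 + 4(f/g)^4 \geq 2(1-\ve)^5 + 4(1-\ve)^4 > 4/3$. The bracket is therefore $\leq 0$ at every spatial maximum of $w$, so $\max_s w(\cdot, t)$ is non-increasing and the upper bound $w \leq \max_s w(\cdot, 0)$ follows.

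The main obstacle is the unsigned, potentially singular gradient term $g_s^2/g^3$ in the evolution of $w$. What resolves it is the algebraic identity $g_s = (g^2/f^2) f_s$ at the spatial maximum, which converts $g_s^2/g^3$ into $(g/f)(f_s^2/f^3)$ and allows combination with the other gradient term to produce a multiple of $w$ itself; Assumption~\ref{StrongerAssumption} then supplies exactly the quantitative pinching and derivative bound needed to make the combined coefficient non-positive.
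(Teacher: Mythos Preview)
Your approach is correct and is essentially the paper's: both compute the evolution of $h=w=1/f-1/g$ and apply the maximum principle, arriving at exactly the same inequality $f_s^2\le 2(f/g)^5+4(f/g)^4$ at the spatial maximum. Two minor remarks: (i) your separate derivation that $f\ge(1-\ve)g$ is preserved is redundant, since the proof of Lemma~\ref{C0pinching} already shows $(g-f)/f$ is bounded by its initial maximum, hence by $\ve/(1-\ve)$; and (ii) the paper in fact writes the evolution equation as $h_t=\Delta h+(f-g)\bigl(\tfrac{2f+4g}{g^5}-\tfrac{f_s^2}{f^4}\bigr)$, which is your equation with the critical-point identity $g_s/g^2=f_s/f^2$ already substituted, so your presentation is arguably cleaner.
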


\begin{proof}
Define $h := \frac{1}{f} - \frac{1}{g}$. It easily follows from \eqref{SRFsystem}
that $h$ evolves by
\begin{equation}
\label{timederivh}
h_t = \Delta h + (f - g) \left(\frac{2f + 4g}{g^5} - \frac{f_s^2}{f^4}\right).
\end{equation}
By Assumption~\ref{StrongerAssumption} and Lemma~\ref{C0pinching},
the inequality $g\leq(1+C_0)f$ is preserved, where $1+C_0=(1-\ve)^{-1}$.
By Assumption~\ref{StrongerAssumption} and Lemma~\ref{lem-der},
the inequality $|f_s|\leq2/\sqrt3$ persists as well. Thus it follows from
our choice of $\ve$ in Assumption~\ref{StrongerAssumption} that
\[
\frac{2f^5 + 4f^4g - f_s^2g^5}{f^4 g^5} \geq
\frac{2(1-\ve)^5+4(1-\ve^4)-\frac43}{f^4} > 0.
\]
Combining this inequality and Lemma~\ref{Berger} (which guarantees that 
$f \le g$ for as long as the flow exists) with evolution equation \eqref{timederivh}, we obtain
\[
\frac{d}{dt}\,h_{\max} \le 0.
\]
It follows that  $0<g-f\leq Cfg$. This inequality and Lemma~\ref{C0pinching} together imply~\eqref{g-festim}.
\end{proof}

We next obtain a $C^1$ estimate for solutions satisfying Assumption~\ref{StrongerAssumption}.
This estimate  improves upon Lemma~\ref{lem-der} and shows that solutions become round in spatial 
neighborhoods of their singular sets.

\begin{lemma}
\label{lem-sharper-der}
If a solution $\big(\mc S^1\times\mc S^3,G(t)\big)$ satisfies Assumption~\ref{StrongerAssumption},
then there exists a uniform constant $C$ such that for as long as the flow exists, one has
\begin{equation}
\label{f-gderiv}
|(f-g)_s| \le CM.
\end{equation}
\end{lemma}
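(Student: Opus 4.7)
Write $u := g - f \geq 0$ (by Lemma~\ref{Berger}), so that the goal is $|u_s| \leq CM$. I reduce this to a Bernstein-type estimate on the spatial derivative of the pinching quantity $h := \tfrac{1}{f} - \tfrac{1}{g} = \tfrac{u}{fg}$ introduced in Lemma~\ref{lem-sharper-est}. By Leibniz,
\[
u_s \;=\; fg\cdot h_s \;+\; h\cdot(fg)_s.
\]
Combining Lemmas~\ref{C0pinching} and \ref{lem-der} yields $|(fg)_s|\leq|f_s|g+f|g_s|\leq C(f+g)\leq CM$ pointwise, so $|h\,(fg)_s|\leq CM$ by Lemma~\ref{lem-sharper-est}. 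Since $fg\leq CM^2$ and $M$ is bounded above by initial data, it suffices to establish the pointwise bound $|h_s|\leq C/M$.

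\textbf{Evolution of $h_s$.} Applying the commutator \eqref{commute} to the evolution equation $h_t = \Delta h + (f-g)R$ of Lemma~\ref{lem-sharper-est}, with $R := \tfrac{2f+4g}{g^5}-\tfrac{f_s^2}{f^4}$, and using $(\Delta h)_s - (\log\rho)_t h_s = \Delta h_s - \bigl(\tfrac{f_s^2}{f^2}+2\tfrac{g_s^2}{g^2}\bigr)h_s$, one computes
\[
(h_s)_t \;=\; \Delta h_s \;-\; \left(\frac{f_s^2}{f^2}+2\frac{g_s^2}{g^2}+fg\,R\right)h_s \;-\; h(fg)_sR \;-\; uR_s.
\]
Under Assumption~\ref{StrongerAssumption}, the quantitative hypothesis $|f_s|\leq 2/\sqrt 3$ together with $2(1-\ve)^5+4(1-\ve)^4>4/3$ (exactly as used in the proof of Lemma~\ref{lem-sharper-est}) forces $fg\,R\geq cM^{-2}$ with $c>0$; this provides strong dissipation in the coefficient of $h_s$. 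The remaining reaction terms are controlled via Lemmas~\ref{C0pinching}, \ref{lem-der}, \ref{lem-sharper-est}, and \ref{BoundMixed} by $|h(fg)_sR|+|uR_s|\leq CM^{-3}$.

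\textbf{Weighted maximum principle and main obstacle.} To close the estimate, consider the Bernstein auxiliary quantity $Q := g^2\,h_s^2 + A(C_*-h)$, where $C_*$ is the bound from Lemma~\ref{lem-sharper-est} and $A$ is a large constant. The weight $g^2$ upgrades the $M^{-2}$-scale dissipation on $h_s$ into a unit-scale dissipation on $Q$, while $A(C_*-h)\geq 0$ contributes a favorable $+A\,uR$ term through its own evolution. With weighted Cauchy--Schwarz and $A$ chosen sufficiently large relative to universal constants, the scale-$M^{-3}$ source terms are absorbed and one arrives at a differential inequality of the form $Q_t\leq\Delta Q + C$. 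The parabolic maximum principle, applied in the $s$-gauge with Calabi's trick at interior maxima as elsewhere in this paper, then yields $Q\leq C$ uniformly in spacetime; hence $|h_s|\leq C/g\leq C/M$, and the algebraic identity above gives $|u_s|\leq CM$. The principal obstacle is precisely this absorption: the available dissipation on $h_s$ and the bad source terms live at scales that differ by exactly one power of $M$, and closure depends essentially on the \emph{strict} positivity $fg\,R>0$, which is what the quantitative pinching condition in Assumption~\ref{StrongerAssumption} is designed to guarantee.
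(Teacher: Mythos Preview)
Your approach differs from the paper's, and there is a genuine gap that prevents it from closing.

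The central problem is your claim that $|h(fg)_sR|+|uR_s|\leq CM^{-3}$. The term $R_s$ contains $-2f_sf_{ss}/f^4$, and the only control you have on $f_{ss}$ is Lemma~\ref{BoundMixed}, which gives $|f_{ss}/f|=|\kappa_{01}|\leq C/\dn M^2$ --- a bound in terms of the \emph{global} minimum $\dn M$, not the pointwise $M$. Consequently $|uR_s|$ has a piece of size $C/(M\dn M^2)$, which is not $\leq C/M^3$ away from the neck. Carried through your Bernstein computation, this produces a source of order $C\sqrt{g^2h_s^2}/\dn M^2$ in the evolution of $g^2h_s^2$. At points where $g\sim 1$ the dissipation is only of unit strength, so Cauchy--Schwarz leaves a residual source $\sim C/\dn M^4$, which is not integrable in time; the argument then yields at best $g^2h_s^2\leq C/\dn M^4$, far too weak. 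The completed-square term $-2g^2h_{ss}^2$ does not rescue this, since $h_{ss}=-f_{ss}/f^2+g_{ss}/g^2+\text{l.o.t.}$ merely trades $f_{ss}$ for $g_{ss}$.

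There is also a sign error: with $Q=g^2h_s^2+A(C_*-h)$ one computes $(A(C_*-h))_t-\Delta(A(C_*-h))=+AuR>0$, which is \emph{unfavorable} for an upper bound on $Q$; and even if it were favorable, $uR$ has no positive lower bound (since $u$ may vanish), so it cannot absorb anything.

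The paper instead works directly with $Q=(f_s/f-g_s/g)^2$. The key structural point is that the evolution equation for this quantity contains \emph{no} second derivatives of $f$ or $g$: the reaction terms are entirely first-order, so the maximum principle closes cleanly with dissipation $-16(1-\ve)^2Q/f^2$ balanced against a source $C\sqrt{Q}/f^2$. Since $k:=f_s/f-g_s/g$ satisfies $h_s=-k/f-(g_s/g)h$, a uniform bound on $|k|$ immediately gives $|h_s|\leq C/M$, and your algebraic reduction then yields the lemma. The moral is that the choice of first-order quantity matters: differentiating the $h$-equation reintroduces curvature through $R_s$, whereas working with $k$ avoids it entirely.
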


\begin{proof}

Consider the quantity
\begin{equation}
\label{Q}
Q := \left(\frac{f_s}{f} - \frac{g_s}{g}\right)^2.
\end{equation}

We claim that if one can show that $Q \le C$ for some uniform constant $C$, then
estimate~\eqref{f-gderiv} follows.  To verify this claim, we observe that if $Q\leq C$, then one has
\begin{align*}
\left(\frac{f_s - g_s}{f}\right)^2
&=\left\{\frac{f_s}{f}-\frac{g_s}{g}
	+g_s\left(\frac{1}{g}-\frac{1}{f}\right)\right\}^2\\
&\leq 2Q +2|g_s|^2\left(\frac{1}{f} - \frac{1}{g}\right)^2\\
&\leq C.
\end{align*}
Here we have used Lemmas~\ref{lem-der} and \ref{lem-sharper-est} to bound the second term
on the second line. This implies  the result we want in the form $|f_s - g_s|^2 \le Cf^2=CM^2$.

We proceed to prove $Q\leq C$. We readily verify  that $Q$ evolves by
\[
\frac{\partial}{\partial t} Q = \Delta Q - 2Q_s^2 
- 2Q\left(\frac{g_s^2}{g^2} + \frac{f_s^2}{f^2} + \frac{8f^2}{g^4}\right)
+ \frac{16 g_s\, (f^2-g^2)}{g^5}\,\left(\frac{f_s}{f} - \frac{g_s}{g}\right).
\]
We obtain
\begin{align*}
\frac{d}{dt}\,Q_{\max}
&\leq-\frac{16(1-\ve)^2}{f^2}\,Q_{\max} + 16\, \frac{|g_s|(g-f)(g+f)\sqrt{Q_{\max}}}{g^5}\\
&\leq\frac{-16(1-\ve)^2\,Q_{\max}+C\sqrt{Q_{\max}}}{f^2}
\end{align*}
by using Lemmas~\ref{lem-der} and \ref{lem-sharper-est}.
Because the numerator is negative if $Q_{\max}>\frac{C^2}{256(1-\ve)^2}$, we conclude that
$Q_{\max}\leq C'$. Estimate~\eqref{f-gderiv} follows.
\end{proof}

The results obtained thus far imply that the curvatures of vertical planes
$\kappa_{12}=\kappa_{31}$ and $\kappa_{23}$ become close near a singularity
at a rate that breaks scale invariance.

\begin{corollary}
\label{Cor}
If a solution $\big(\mc S^1\times\mc S^3,G(t)\big)$ satisfies Assumption~\ref{StrongerAssumption},
then there is a uniform constant $C$ such that for as long as the flow exists, one has
\begin{equation}
\label{kappaestimate}
|\kappa_{12}-\kappa_{23}|\leq\frac{C}{M},
\end{equation}
and hence 
\begin{equation}
\label{kappaestimate2}
(T-t)|\kappa_{12}-\kappa_{23}|\leq C\sqrt{T-t}.
\end{equation}
\end{corollary}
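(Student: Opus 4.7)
The plan is to prove the corollary by direct algebraic manipulation of the explicit formulas for $\kappa_{12}$ and $\kappa_{23}$, using the $C^0$ and $C^1$ estimates already established in Lemmas~\ref{C0pinching}, \ref{lem-der}, \ref{lem-sharper-est}, and \ref{lem-sharper-der}, followed by invocation of Lemma~\ref{lem-lower} to convert $M$-bounds into $(T-t)$-bounds.

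First I would subtract the curvature expressions~\eqref{Kurv12} and \eqref{Kurv23} and regroup:
\[
\kappa_{12}-\kappa_{23}=\frac{4(f^2-g^2)}{g^4}+\frac{g_s}{g}\left(\frac{g_s}{g}-\frac{f_s}{f}\right).
\]
The first term is controlled by writing $f^2-g^2=(f-g)(f+g)$ and using the sharper $C^0$ estimate~\eqref{g-festim} of Lemma~\ref{lem-sharper-est}, which gives $|f-g|\leq CM^2$, combined with the fact that under Assumption~\ref{StrongerAssumption}, Lemma~\ref{C0pinching} yields $g\leq(1+C_0)f$, hence $M\leq g\leq CM$. This produces
\[
\left|\frac{4(f^2-g^2)}{g^4}\right|\leq\frac{C\,M^2\cdot M}{M^4}=\frac{C}{M}.
\]
For the second term, the $C^0$ bound on $|g_s|$ from Lemma~\ref{lem-der} together with the bound $\sqrt Q\leq C$ on $|f_s/f-g_s/g|$ established inside the proof of Lemma~\ref{lem-sharper-der} give
\[
\left|\frac{g_s}{g}\left(\frac{g_s}{g}-\frac{f_s}{f}\right)\right|\leq\frac{|g_s|}{g}\cdot C\leq\frac{C}{M}.
\]
Adding the two contributions proves~\eqref{kappaestimate}.

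For estimate~\eqref{kappaestimate2}, I would apply Lemma~\ref{lem-lower}, which under Assumption~\ref{MildAssumption} (hence under the stronger Assumption~\ref{StrongerAssumption}) gives $\dn M^2\geq c(T-t)$ and therefore $M\geq\sqrt{c(T-t)}$ at every point. Substituting this into~\eqref{kappaestimate} yields $|\kappa_{12}-\kappa_{23}|\leq C/\sqrt{T-t}$, and multiplying through by $(T-t)$ gives the scale-invariant form~\eqref{kappaestimate2}.

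There is no real obstacle here: the work is in checking that the algebraic grouping of $\kappa_{12}-\kappa_{23}$ isolates a factor that is controlled by $|f-g|$ (for which the sharper quadratic-in-$M$ bound is crucial) and a factor controlled by the scale-invariant logarithmic-derivative difference $f_s/f-g_s/g$ (for which the $Q$-bound from Lemma~\ref{lem-sharper-der} is exactly what is needed). The only subtlety is that one must use the \emph{sharper} estimate~\eqref{g-festim} rather than the scale-invariant bound $|f-g|\leq C_0M$ of Lemma~\ref{C0pinching}: the latter would only give $|\kappa_{12}-\kappa_{23}|\leq C/M^2$, which is scale-invariant and therefore too weak to yield the desired breaking of scale invariance expressed by~\eqref{kappaestimate2}.
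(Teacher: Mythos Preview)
Your proof is correct and follows essentially the same approach as the paper: the paper writes the identity as $\kappa_{12}-\kappa_{23}=4\frac{(f+g)(f-g)}{g^4}+\frac{g_s(fg_s-gf_s)}{fg^3}$ (equivalent to your regrouping, modulo what appears to be a typo in the denominator) and then invokes Lemmas~\ref{lem-der}, \ref{lem-sharper-est}, and \ref{lem-sharper-der} for \eqref{kappaestimate}, followed by Lemmas~\ref{lem-upper} and \ref{lem-lower} for \eqref{kappaestimate2}. Your observation that only the lower bound from Lemma~\ref{lem-lower} is actually needed for \eqref{kappaestimate2} is correct, and your use of the intermediate bound $Q\leq C$ from the proof of Lemma~\ref{lem-sharper-der} is exactly what the paper implicitly relies on.
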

\begin{proof}
From the curvature formulas \eqref{Kurv12} and \eqref{Kurv23}, one readily verifies that 
\begin{equation}
\label{kappacalc}
\kappa_{12}-\kappa_{23}=4\frac{(f+g)(f-g)}{g^4}+\frac{g_s(fg_s-gf_s)}{fg^3}.
\end{equation}
Estimate~\eqref{kappaestimate} then follows from \eqref{kappacalc},
 together with Lemmas~\ref{lem-der}, \ref{lem-sharper-est}, and \ref{lem-sharper-der}.
Finally, applying  Lemmas~\ref{lem-upper} and \ref{lem-lower} to \eqref{kappaestimate},
we obtain  estimate \eqref{kappaestimate2}. 
\end{proof}

The mixed sectional curvatures also become close at a rate that breaks scaling.

\begin{lemma}
\label{lem-sharper-k01}
If a solution $\big(\mc S^1\times\mc S^3,G(t)\big)$ satisfies Assumption~\ref{StrongerAssumption},
then there exists a uniform constant $C$ such that for as long as the flow exists, one has
\[|\kappa_{01} - \kappa_{02}| \le \frac{C}{\dn M}.\]
\end{lemma}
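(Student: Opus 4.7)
I would mirror the double-barrier strategy of Lemma~\ref{BoundMixed}, but applied to the scalar difference $D := \kappa_{01}-\kappa_{02}$ rather than to the mixed curvatures individually. Subtracting \eqref{kappa02evolution} from \eqref{kappa01evolution} and regrouping coefficients so that every ``symmetric'' contribution is written as $f^n - g^n$ or $f_s^{k} - g_s^{k}$, one puts the evolution of $D$ into the schematic form
\[
D_t = \Delta D + \mathcal L(D, D_s) + \mathcal E,
\]
where $\mathcal L$ is a first-order linear operator in $D$ and $D_s$, and $\mathcal E$ is a residual ``source'' each of whose terms carries at least one explicit factor of $g-f$, $(f-g)_s$, or $\kappa_{12}-\kappa_{23}$.

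\textbf{Key estimate on the residual.} Lemma~\ref{lem-sharper-est} gives $g-f = O(M^2)$, Lemma~\ref{lem-sharper-der} gives $(f-g)_s = O(M)$, and Corollary~\ref{Cor} gives $\kappa_{12}-\kappa_{23} = O(1/M)$. Each such factor is a full power of $M$ smaller than the crude bound that would be obtained from Lemmas~\ref{lem-der} and \ref{BoundMixed} alone. Combined with $|\kappa_{01}|,|\kappa_{02}| \le C/\dn M^2$ from Lemma~\ref{BoundMixed} and the derivative bounds of Lemma~\ref{lem-der}, one concludes that $|\mathcal E| \le C/\dn M^3$, i.e., one power better than the budget $C/\dn M^4$ that drives the proof of Lemma~\ref{BoundMixed}.

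\textbf{Auxiliary function and maximum principle.} Since $\mathcal L$ cannot be absorbed directly, one adjoins nonnegative corrections whose parabolic inequalities are already available from the proofs of Lemmas~\ref{lem-sharper-est}--\ref{lem-sharper-der}, for instance
\[
D^{*} := D + a\,Q + b\,\frac{g-f}{g}\left(\frac{f_s^2}{f^2}+\frac{g_s^2}{g^2}\right),
\]
with $Q = (f_s/f - g_s/g)^2$ and $a,b>0$ sufficiently large. The diagonal terms $-\kappa_{01}^2$ and $-\kappa_{02}^2$ that appear in $(f_s^2/f^2)_t$ and $(g_s^2/g^2)_t$ (cf.\ Lemma~\ref{BoundMixed}), together with the negative reaction term in the evolution of $Q$ (cf.\ Lemma~\ref{lem-sharper-der}), dominate whichever portion of $\mathcal L(D)$ is not already tame. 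The maximum principle then yields $\tfrac{d}{dt}\,D^{*}_{\max} \le C/\dn M^3$. Invoking $\dn M^2 \asymp T-t$ from Lemmas~\ref{lem-upper}--\ref{lem-lower} and integrating from $0$ to $t$, one obtains $D^{*}_{\max}(t) \le C_1 + C/\sqrt{T-t}$. Since each correction in $D^{*}$ is a priori bounded (by Lemmas~\ref{lem-der} and \ref{lem-sharper-der}), this reduces to $\kappa_{01}-\kappa_{02} \le C/\dn M$. The symmetric argument applied to $-D$ with appropriately signed corrections provides the matching lower bound.

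\textbf{Main obstacle.} The algebraic reorganization is the substantive step. The cross-coupling coefficient $4(\kappa_{12}+f^2/g^4)$ multiplying $\kappa_{02}$ in \eqref{kappa01evolution} must be matched against $(4f^2/g^4 - 2f_sg_s/fg)$ multiplying $\kappa_{01}$ in \eqref{kappa02evolution}; their discrepancy reduces, after bookkeeping, to $\kappa_{12}-\kappa_{23}$ plus controllable remainders, and Corollary~\ref{Cor} is precisely what prevents this from being an obstruction. Similarly, each purely $f_s, g_s$-valued source term must be rewritten via $f = g-(g-f)$ and $f_s = g_s-(f-g)_s$ so that at least one factor of $g-f$ or $(f-g)_s$ is exposed; only then does Assumption~\ref{StrongerAssumption} deliver the required gain of one power of $M$ over the budget available in Lemma~\ref{BoundMixed}.
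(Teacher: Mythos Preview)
Your overall instinct---that the sharper lemmas gain one power of $M$ in the source terms---is sound, and the bookkeeping showing the zero-order residual $\mathcal E$ is $O(1/\dn M^3)$ is essentially correct. The gap is in your treatment of $\mathcal L$. When you subtract \eqref{kappa02evolution} from \eqref{kappa01evolution}, the quadratic contributions combine to $2\kappa_{01}^2-2\kappa_{02}^2=2(\kappa_{01}+\kappa_{02})D$, a term linear in $D$ whose coefficient is only known to satisfy $|\kappa_{01}+\kappa_{02}|\le C/\dn M^2$ with no sign control (Lemma~\ref{BoundMixed} is all that is available at this stage; Corollary~\ref{SmallCurvature} would be circular). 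Your proposed corrections do not kill this: $aQ$ contributes $-2ak_s^2\approx-2aD^2$ to the evolution, but by Cauchy--Schwarz that only absorbs $2(\kappa_{01}+\kappa_{02})D$ at the cost of a term $\frac{1}{a}(\kappa_{01}+\kappa_{02})^2=O(1/\dn M^4)$, which is exactly the budget of Lemma~\ref{BoundMixed} and yields no improvement. The second correction $b\frac{g-f}{g}(f_s^2/f^2+g_s^2/g^2)$ carries the prefactor $(g-f)/g=O(f)$, so its $-\kappa_{0j}^2$ contributions land in $\mathcal E$ rather than helping with $\mathcal L$. Thus the differential inequality you obtain is at best $\frac{d}{dt}D^*_{\max}\le CD^*_{\max}/(T-t)+C/(T-t)^{3/2}$, and Gr\"onwall with an uncontrolled exponent does not close.

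The paper circumvents this by dropping to first order. It sets $k:=f_s/f-g_s/g$ (your $\sqrt Q$), observes that $k$ already satisfies a scalar equation $k_t=\Delta k-Ak+O(1/f^3)$ with $A=f_s^2/f^2+g_s^2/g^2+8f^2/g^4\ge c/f^2>0$, and then \emph{differentiates in~$s$} to obtain $(k_s)_t=\Delta k_s-Ak_s-Bk+D_0$, with $|B|+|D_0|\le C/(T-t)^{3/2}$. The strict positivity of $A$ is the key structural fact: it lets one absorb the cross terms in the evolution of $k_s^2$ via weighted Cauchy--Schwarz at cost $B^2/A+D_0^2/A=O(1/(T-t)^2)$, integrate, and conclude $|k_s|\le C/\sqrt{T-t}$. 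Since $k_s=-(\kappa_{01}-\kappa_{02})+(g_s^2/g^2-f_s^2/f^2)$ and the second term is $O(1/\dn M)$ by Lemma~\ref{lem-sharper-der}, the result follows. In short, the paper trades the second-order quantity $D$ (whose evolution inherits the uncontrolled $\kappa_{01}^2-\kappa_{02}^2$) for the differentiated first-order quantity $k_s$, whose evolution comes with a built-in favorable sign.
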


\begin{proof}
We define the quantity $k := \frac{f_s}{f} - \frac{g_s}{g}$, observing that 
it follows from Lemma~\ref{lem-sharper-der} that $k=\sqrt Q$ satisfies 
$|k|\leq C$.
Using equations~\eqref{SRFsystem} and \eqref{f_s-evolution}--\eqref{g_s-evolution}, one readily
calculates that $k$ evolves by
\[
k_t = \Delta k - \left(\frac{g_s^2}{g^2} + \frac{f_s^2}{f^2} + \frac{8f^2}{g^4}\right)k
+ \frac{8 g_s\, (f^2 - g^2)}{g^5}.
\]
If we differentiate both sides of this equation with respect to  $s$, use the commutator \eqref{commute},
and recall formula~\eqref{DefineLaplacian} for the Laplacian, we obtain
\begin{equation}
\label{ksderiv}
\frac{\partial}{\partial t}(k_s)= \Delta(k_s) - A\, k_s - B\, k + D,
\end{equation}
where $A$, $B$, and $D$ are functions of $(s,t)$ defined by
\[
A := \frac{g_s^2}{g^2} + \frac{f_s^2}{f^2} + \frac{8f^2}{g^4}, \quad
B := \left\{\frac{g_s^2}{g^2} + \frac{f_s^2}{f^2} + \frac{8f^2}{g^4}\right\}_s, \quad
D := \left\{\frac{8 g_s\, (f^2-g^2)}{g^5}\right\}_s.
\]
Lemmas \ref{lem-upper}, \ref{lem-lower}, \ref{lem-der}, \ref{BoundMixed},
\ref{lem-sharper-est}, and \ref{lem-sharper-der} imply that $A$ and $|B|+|D|$
may be estimated by
\begin{equation}
\label{eq-beh-ABD}
\frac{c}{T-t} \le A \le \frac{C}{T-t} \qquad\mbox{and}\qquad|B| + |D| \le \frac{C}{(T-t)^{3/2}}.
\end{equation}

Using \eqref{ksderiv}, we readily calculate
\[
(k_s^2)_t = \Delta(k_s^2) - 2k_{ss}^2 - 2A\, k_s^2 - 2B\, k k_s + 2D\, k_s.
\]  
Then using the maximum principle, weighted Cauchy--Schwarz, and  estimate~\eqref{eq-beh-ABD},
and recalling that $|k|$ is uniformly bounded, we find that
\begin{align*}
\frac{d}{dt} (k_s^2)_{\max} &\le -2A k_s^2 - 2Bk k_s + 2D k_s \\
&\le -2A k_s^2 + \left(A k_s^2 + \frac{B^2}{A} k^2\right) + \left( Ak_s^2 + \frac{D^2}{A} \right) \\
&\le \frac{C}{(T-t)^2}.
\end{align*}
Integrating this in time, using Lemma~\ref{lem-upper}, and enlarging $C$ if necessary, we get
\begin{equation}
\label{kmax} 
\left|(k_s)_{\max}\right|  \le \frac{C}{\sqrt{T-t}} \le \frac{C}{\dn M}.
\end{equation}
Recalling the definition of $k$, we see that \eqref{kmax} implies that
\[
|k_s| = \left| \frac{f_{ss}}{f} - \frac{g_{ss}}{g} + \frac{g_s^2}{g^2} - \frac{f_s^2}{f^2}\right| \le \frac{C}{\dn M}.
\]
This estimate, together with Lemmas~\ref{lem-der} and \ref{lem-sharper-der}, implies that
\[
|\kappa_{01} - \kappa_{02}| \le \frac{C}{\dn M} + |k|\, \left| \frac{f_s}{f} + \frac{g_s}{g}\right| \le \frac{C}{\dn M},
\]
as desired.
\end{proof}

\section{Local convergence to the shrinking cylinder soliton}	\label{cylinder}
In this section, we demonstrate that solutions originating from initial data that satisfy
Assumption~\ref{StrongerAssumption} converge locally, after parabolic rescaling,
to the rotation- and translation-invariant shrinking cylinder soliton.

\medskip
We begin by deriving an improved $C^1$ bound for the metric component $f$.

\begin{lemma}	\label{lem-fs-1}
If a solution $\big(\mc S^1\times\mc S^3,G(t)\big)$ satisfies Assumption \ref{StrongerAssumption},
then there exists a uniform constant $C$ such that for as long as the flow exists, one has
\[
f_s^2 \le 1 + C\sqrt{T-t}.
\] 
\end{lemma}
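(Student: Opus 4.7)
The plan is to apply the parabolic maximum principle to $f_s^2$ with a time-dependent barrier $1+B\sqrt{T-t}$, making critical use of the sharper $C^0$ and $C^1$ estimates just proved. First I would differentiate equation~\eqref{f_s-evolution} and multiply through by $2f_s$ to obtain
\[
(f_s^2)_t = \Delta(f_s^2) - 2f_{ss}^2 - 2\frac{f_s}{f}(f_s^2)_s - \left(12\frac{f^2}{g^4} + 4\frac{g_s^2}{g^2}\right)f_s^2 + 16\frac{f^3}{g^5}f_s g_s.
\]

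Next I would exploit near-rotational symmetry through Lemma~\ref{lem-sharper-est} (so $g=f+O(f^2)$) and Lemma~\ref{lem-sharper-der} ($g_s=f_s+O(f)$), substituting these into the reaction terms. The key cancellation is that the leading coefficients $-12-4+16=0$ in $F/g^2$ combine with the subleading $-4F^2/g^2$ to produce the rotationally symmetric reaction $4F(1-F)/g^2$, with remaining errors bounded by $O(1/g)$ thanks to the sharper estimates. Using $g\ge c\sqrt{T-t}$ from Lemma~\ref{lem-lower}, at any spatial maximum of $F:=f_s^2$ we thus obtain
\[
\tfrac{d}{dt}F_{\max}(t)\le \tfrac{4F_{\max}(1-F_{\max})}{g_{\star}^2}+\tfrac{C}{\sqrt{T-t}},
\]
where $g_{\star}(t)$ denotes the value of $g$ at the max point.

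I would then set $P:=f_s^2-1-B\sqrt{T-t}$ for a sufficiently large constant $B$ (depending on the initial data and the uniform constants $c,C$ already extracted). Assumption~\ref{StrongerAssumption} gives $P(\cdot,0)\le -B\sqrt T<0$. Supposing for contradiction that $P$ first reaches zero at some $(s_0,t_0)$, the first and second order tests yield $F_s=0$, $f_{ss}=0$ (so $\kappa_{01}=0$ at that point, which also allows Lemma~\ref{lem-sharper-k01} to be invoked if needed), $\Delta F\le 0$, and $F_t\ge -B/(2\sqrt{T-t_0})$. Substituting $F=1+B\sqrt{T-t_0}$ into the evolution inequality and rearranging leads to a contradiction once $B$ is chosen large enough relative to the error constant $C$.

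The main obstacle is that the damping $-4B\sqrt{T-t_0}/g_\star^2$ only beats the inhomogeneities when $g_\star$ is comparable to $\sqrt{T-t_0}$, i.e.~near the singular set; at a hypothetical maximum lying well away from the neck, $g_\star$ may be of order one and the above comparison fails. I would handle this either by augmenting the barrier with a small spatial term $-Dg^2$, taking advantage of the clean identity $(g^2)_t=\Delta(g^2)-4g_s^2+4f^2/g^2-8$ to gain extra negative reaction where $g$ is large, or by splitting the manifold into a neighborhood of the singular set (where the barrier argument above applies) and its complement (where $g$ is bounded below and standard parabolic regularity together with Lemma~\ref{lem-der} confines $f_s^2$ to within $C\sqrt{T-t}$ of its initial bound $1$). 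Either route, combined with the maximum-principle estimate above, delivers $f_s^2\le 1+C\sqrt{T-t}$ globally.
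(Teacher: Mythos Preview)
Your derivation of the evolution equation for $v=f_s^2$ and the reduction of the reaction terms via Lemmas~\ref{lem-sharper-est} and \ref{lem-sharper-der} to the form $4v(1-v)/f^2+C/f$ is exactly what the paper does. The paper does not set up an explicit time-barrier; it simply observes that when $v_{\max}>1$ one has $\frac{d}{dt}v_{\max}\le 4(1-v_{\max})/f^2+C/f$, so $v_{\max}$ is strictly decreasing unless $v_{\max}\le 1+Cf/4$, and then invokes Lemma~\ref{lem-upper} to replace $Cf/4$ by $C\sqrt{T-t}$, concluding $v_{\max}(t)\le\max\{v_{\max}(0),\,1+C\sqrt{T-t}\}$. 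So on the core mechanism you and the paper agree.

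The obstacle you flag is genuine, and in fact the paper does \emph{not} address it: its appeal to Lemma~\ref{lem-upper} bounds $\dn M=\min f$, not the value of $f$ at the spatial maximum of $f_s^2$; the paper simply asserts the replacement $f\mapsto C\sqrt{T-t}$ without justifying why the maximum must occur where $f$ is small. So you are being more scrupulous than the text here. That said, neither of your proposed remedies closes the gap as written. For route~(a), the reaction in $(g^2)_t$ contributes the \emph{constant} $-8$ (plus bounded terms), not a term whose strength grows with $g$, so augmenting the barrier by $\pm Dg^2$ does not produce extra damping proportional to $g$ at large $g$; one still has to beat the $C/f$ error with a coefficient of order $1/f^2$, which is exactly the original difficulty. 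For route~(b), smooth parabolic regularity on the complement of the neck gives uniform bounds on $f_s^2$ up to $t=T$, but it does not by itself force $f_s^2$ to decay to $1$ at the rate $\sqrt{T-t}$: the statement that the limit profile satisfies $|f_s|\le 1$ on the regular set is precisely what the lemma is asserting (in the limit), so it cannot be used as an input. Thus your proposal reproduces the paper's argument and correctly locates its soft spot, but the patches you sketch do not resolve it.
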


\begin{proof}
Based on the evolution equation \eqref{f_s-evolution} for $f_s$, one easily determines that the evolution equation for the quantity $v:=f_s^2$ is given by
\[
v_t = \Delta v - 2f_{ss}^2 - 2 \frac{f_s}{f} v_s
- 2\left(\frac{6f^2}{g^4} + \frac{2g_s^2}{g^2}\right) v + 16 \frac{f^3}{g^5} f_s g_s.
\]
Using Lemmas~\ref{lem-der}, \ref{lem-sharper-est}, and \ref{lem-sharper-der}, we obtain a
uniform constant $C$ such that
\begin{align*}	\label{lem-v-ev-eq}
v_t &\le \Delta v - \frac{2 f_s}{f} v_s - \frac{12 v}{f^2(1+Cf)^4}
- \frac{4 v\big\{f_s + (g_s - f_s)\big\}^2}{f^2 (1+ Cf)^2}
+ \frac{16 f_s \big\{f_s + (g_s - f_s)\big\}}{f^2} \\
&\le \Delta v - \frac{2 f_s}{f} v_s + \frac{4v(1-v)}{f^2} + \frac{C}{f}.
\end{align*}
To get the last inequality, we used the fact that for any $m\geq1$,
\[
\left|\frac{1}{f^2}-\frac{1}{f^2(1+Cf)^m}\right|\leq\frac{C}{f}.
\]

Now we consider the quantity $v_{\max}(t) := \max_{s\in\mc S^1}v(s,t)$, which satisfies
the differential inequality
\[
\frac{d}{dt}v_{\max}\leq\frac{4v(1-v)}{f^2}+\frac{C}{f}.
\]
If $0\leq v_{\max}\leq1$,
there is nothing to prove. If $v_{\max}>1$, then one has
\[
\frac{d}{dt}v_{\max}\leq4\frac{1-v_{\max}}{f^2}+\frac{C}{f},
\]
which implies that $v_{\max}$ is strictly decreasing unless $4(1-v_{\max})+Cf\geq0$,
hence by Lemma~\ref{lem-upper}, strictly decreasing unless
\[
v_{\max}\leq1+C\sqrt{T-t},
\]
where $C$ is a uniform constant.
Combining this inequality with Assumption~\ref{StrongerAssumption}, we conclude that
\[
v_{\max}(t) \le \max\big\{ v_{\max}(0),\, 1 + C\sqrt{T-t}\big\} = 1 + C\sqrt{T-t}.
\]
\end{proof}

As a tool for controlling the second derivative of $f$, we next consider the quantity\,\footnote{This
quantity may be compared to $F$ defined in (25) of \cite{AK04}. In that paper, one has $f=g=\psi$.
So the quantity $F$ in \cite{AK04} simplifies to $F=2\psi\psi_{ss}|\log\psi\,|$ at a neck, and is bounded from above.}
\begin{equation}
\label{Fdef}
F := f f_{ss} \log f,
\end{equation}
and show that it is bounded from below in certain  space-time neighborhoods of a local singularity.
We define the neighborhoods of interest as follows.
For fixed $0<\delta\ll1$, there exists by Lemma~\ref{lem-upper} a time $t_\delta\in[0,T)$
such that the radius of each neck that becomes singular satisfies $f\leq\delta$ for all $t_0\leq t<T$.
Because $f_{ss}>0$ at each local minimum of $f$, the set
\[
\Omega=\left\{f_{ss}\log\left(\frac{f}{\delta}\right)<0\right\}
\]
describes an open interval around that neck (or those necks) for all $t\in(t_\delta,T)$.\footnote{If
there are several equally small necks, $\Omega$ may have several connected components
in space. This does not pose a problem for the argument that follows.}

\begin{lemma}	\label{lem-cyl-est}
If a solution $\big(\mc S^1\times\mc S^3,G(t)\big)$ satisfies Assumption \ref{StrongerAssumption},
then there exists a constant $C$  such that for as long as the flow exists, one has
\[
F \ge -C
\]
in the neck-like region $\Omega$.
\end{lemma}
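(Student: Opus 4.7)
The plan is to derive an evolution equation for $F = f f_{ss} \log f$ and apply the parabolic maximum principle on the region $\Omega$. The key identity $f_{ss} = -f \kappa_{01}$ lets us rewrite $F = -f^2 \kappa_{01} \log f$, so that $F_t - \Delta F$ can be computed from \eqref{kappa01evolution} together with \eqref{SRFsystem}. Throughout $\Omega$, $f \leq \delta < 1$ gives $\log f < 0$, and $f_{ss} \geq 0$ by the definition of $\Omega$, so $F \leq 0$ and we want to rule out $F \searrow -\infty$.

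First I would isolate the dominant reaction term. The $2\kappa_{01}^2$ term in \eqref{kappa01evolution}, multiplied by the positive factor $-f^2 \log f$ appearing in $F$, contributes
\[
\frac{2 F^2}{f^2 |\log f|}
\]
to $F_t$. This is a \emph{positive} reaction, quadratic in $F$, and will provide the barrier. The additional term $-4(g_s^2/g^2 + f^2/g^4)\kappa_{01}$ in \eqref{kappa01evolution} likewise contributes the non-negative quantity $4(g_s^2/g^2 + f^2/g^4)(-F)$ to $F_t$ in $\Omega$, and so only helps.

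The remaining terms in \eqref{kappa01evolution} (notably $4(\kappa_{12}+f^2/g^4)\kappa_{02}$ and the quartic-derivative line $12 f_s^2/g^4 + 40 f^2 g_s^2/g^6 - 48 f f_s g_s/g^5 - 4 f_s g_s^3/(fg^3)$), together with those arising from differentiating the prefactor $-f^2 \log f$ in time, are the genuine error terms, all of which vanish in the rotationally symmetric case $f=g$ studied in \cite{AK04}. These I would control using the sharper estimates of Section~\ref{SharperEstimates}: Lemma~\ref{lem-sharper-est} gives $g-f \leq CM^2$; Lemma~\ref{lem-sharper-der} gives $|f_s-g_s| \leq CM$; Lemma~\ref{lem-fs-1} gives $f_s^2 \leq 1 + O(\sqrt{T-t})$; Lemma~\ref{lem-sharper-k01} gives $|\kappa_{01}-\kappa_{02}|\leq C/\dn M$. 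Combined with Lemmas~\ref{lem-upper} and \ref{lem-lower} (so that $f \asymp \sqrt{T-t}$ at a neck), all these error contributions can be absorbed into the positive reaction, reducing the evolution at an interior minimum of $F$ to
\[
\frac{d}{dt} F_{\min}(t) \geq \frac{2 F_{\min}^2}{f^2 |\log f|} - \frac{C}{f^2 |\log f|} - C.
\]
This ODE-type inequality forces $(F_{\min})_t > 0$ as soon as $F_{\min}^2$ exceeds a uniform constant, preventing $F_{\min}$ from diverging.

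For the boundary of $\Omega$: where $f_{ss} = 0$, one has $F = 0$; where $f = \delta$, the solution is still smooth and bounded uniform curvature bounds (via Lemma~\ref{BoundMixed} applied on the locus $f \geq \delta/2$) give $|f_{ss}| = |f\kappa_{01}| \leq C$, hence $|F| \leq C$. At the initial time $t_\delta$, $F$ is continuous on the compact set $\overline{\Omega(t_\delta)}$, hence bounded. Combining these boundary and initial bounds with the ODE comparison for $F_{\min}(t)$ yields $F \geq -C$ throughout $\Omega \times (t_\delta, T)$, as claimed. The main obstacle is the bookkeeping of the non-rotationally-symmetric correction terms: one must show that every error introduced by the discrepancies $g-f$, $g_s-f_s$, and $\kappa_{01}-\kappa_{02}$ is absorbed into the positive reaction $2F^2/(f^2|\log f|)$ after using $f^2 \geq c(T-t)$, which is precisely the purpose of the refined estimates of Section~\ref{SharperEstimates}.
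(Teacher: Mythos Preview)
Your strategy is sound and close to the paper's, but the execution differs in one structural choice and has two imprecisions worth flagging.

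\textbf{Comparison with the paper.} Both arguments compute the parabolic evolution of $F$ and work at an interior minimum in $\Omega$. The paper, however, does not isolate the quadratic reaction $2F^{2}/(f^{2}|\log f|)$ as a standalone barrier. Instead it pairs $-2f_{ss}^{2}\log f$ with the companion term $-2f^{3}f_{ss}/g^{4}$ and performs a dichotomy: either $\frac{f^{3}}{g^{4}\log f}+f_{ss}\le 0$, which immediately forces $F\ge -f^{4}/g^{4}\ge -1$, or that expression is nonnegative, in which case the grouped term is dropped and the remaining errors yield the \emph{linear} differential inequality
\[
\frac{d}{dt}F_{\min}\;\ge\;C\,\frac{F_{\min}+\log(T-t)}{\sqrt{T-t}},
\]
which is integrated explicitly. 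Your route---retain the quadratic and argue it dominates once $|F_{\min}|$ exceeds a uniform constant---is a legitimate alternative that avoids the integration step; the paper's case split avoids having to absorb linear-in-$F$ errors into the quadratic.

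\textbf{Two corrections.} First, your displayed ODE inequality is not quite what one obtains. Among the reaction terms (specifically the last line of the paper's $N$, containing $f_{s}^{2}f_{ss}/f^{2}$ and its relatives) there is a contribution of order $C|F|/\sqrt{T-t}\asymp C|F|/f$, i.e.\ \emph{linear} in $F$, not merely $C/(f^{2}|\log f|)$. You must absorb this into the quadratic via weighted Cauchy--Schwarz, which lowers the coefficient $2$ to some positive $c<2$; the conclusion $(F_{\min})_{t}>0$ for $F_{\min}^{2}>C$ still follows. Second, your boundary argument at $f=\delta$ appeals to Lemma~\ref{BoundMixed} ``on the locus $f\ge\delta/2$'' to get $|f_{ss}|\le C$; but that lemma only yields $|\kappa_{01}|\le C/\dn M^{2}$, which blows up as $t\nearrow T$ regardless of $f$. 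The paper instead takes the spatial boundary of the relevant component of $\Omega$ to be where $f_{ss}=0$, so that $F=0$ there; you should do the same rather than try to control $f_{ss}$ at $f=\delta$.
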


\begin{proof}
It follows from Lemmas~\ref{lem-lower} and \ref{BoundMixed} that
$F\geq C\log(T-t_\delta)/\sqrt{T-t_\delta}$ at $t=t_\delta$.
Moreover, the definition of $\Omega$ guarantees that  $F=0$ at the endpoints of each component
of $\Omega$ for all times $t_\delta<t<T$. Hence $F$ is uniformly bounded on the parabolic
boundary of $\Omega$. To complete the proof, we show that $F$ is bounded from below
at all interior points. To do so, in the following argument, we use the facts that $f_{ss}>0 $
and $f<\delta$ inside $\Omega$.

Differentiating~\eqref{SRFsystem} using the commutator \eqref{commute},
we compute that $F$ evolves by
\begin{equation}
\label{eq-F}
F_t = \Delta F-2\left(2+\frac{1}{\log f}\right)\frac{f_s}{f}\,F_s + N,
\end{equation}
where the reaction term $N$ is given by 
\begin{align*}
N &:= -f \log f\left(\frac{12 f f_s^2}{g^4} - \frac{48 f^2 f_s g_s}{g^5}
	+ \frac{40 f^3 g_s^2}{g^6} - \frac{4 f_s g_s^3}{g^3}\right)\\
&\quad-\frac{8f^4 \log f}{g^4}\left(\frac{f_{ss}}{f} - \frac{g_{ss}}{g}\right)
- 2 f_{ss}\log f\left( \frac{f^3}{g^4 \log f} + f_{ss}\right)\\
&\quad+ \frac{2 f_s^2 f_{ss}}{f} \left( 2 + \frac{1}{\log f}\right)
- 4f\log f \left(\frac{g_s^2 f_{ss}}{g^2} + \frac{f_s g_s g_{ss}}{g^2} - \frac{f_s^2 f_{ss}}{f^2} \right) .
\end{align*}
To proceed, we estimate the various terms in $N$ one-by-one:

Beginning with the coefficient of $-f \log f$, we observe that there exists a uniform constant $C$ such that
\begin{align*}
\frac{12 f f_s^2}{g^4} - \frac{48 f^2 f_s g_s}{g^5}  + \frac{40 f^3 g_s^2}{g^6} - \frac{4 f_s g_s^3}{g^3}
&\ge \frac{12 f_s^2}{f^3 (1+Cf)^4} - \frac{48 f_s\, (f_s + Cf)}{f^3}\\
&\quad+ \frac{40\big\{f_s + (g_s - f_s)\big\}^2}{f^3(1 + Cf)^6}
- \frac{4 f_s\, \big\{f_s + (g_s - f_s)\big\}^3}{f^3} \\
&\ge \frac{4 f_s^2(1 - f_s^2)}{f^3} - \frac{C}{f^2}\\
&\ge -\frac{C}{f^2}.
\end{align*}
To obtain this estimate, we use Lemmas~\ref{lem-der}, \ref{lem-sharper-est},
and \ref{lem-sharper-der}, and then, in the last step, Lemmas~\ref{lem-lower} and  \ref{lem-fs-1}.
It follows from this estimate, using  Lemma~\ref{lem-lower} again, that\,\footnote{Here we use
the fact that $|\log x\,|/x$ is monotone decreasing for $0<x<1$.}
\begin{equation}	\label{eq-est1}
-f \log f \left(\frac{12 f f_s^2}{g^4} - \frac{48 f^2 f_s g_s}{g^5} 
+ \frac{40 f^3 g_s^2}{g^6} - \frac{4 f_s g_s^3}{g^3}\right) \geq C\,\frac{\log (T-t)}{\sqrt{T-t}}.
\end{equation}
Similarly, relying on Lemma~\ref{lem-sharper-k01}, we obtain
\begin{equation}	\label{eq-est2}
-\frac{8f^4 \log f}{g^4}\left(\frac{f_{ss}}{f} - \frac{g_{ss}}{g}\right)
\geq C\,\frac{\log(T-t)}{\sqrt{T-t}}.
\end{equation}
We deal with $- 2 f_{ss}\log f\left( \frac{f^3}{g^4 \log f} + f_{ss}\right)$ below.
Examining  the fourth term, we observe that at any interior point of $\Omega$, one has
\begin{equation}
\label{eq-est3}
\frac{2 f_s^2 f_{ss}}{f} \left(2 + \frac{1}{\log f}\right) > 0.
\end{equation}
Finally, using the positivity of $f_{ss}$ in $\Omega$ and applying Lemmas~\ref{lem-lower},
\ref{lem-der},  \ref{lem-sharper-est}, \ref{lem-sharper-der}, and \ref{lem-sharper-k01},
we observe that
\begin{align*}
\frac{g_s^2 f_{ss}}{g^2} + \frac{f_s g_s g_{ss}}{g^2} - \frac{f_s^2 f_{ss}}{f^2}
&\geq \frac{f_s g_s g_{ss}}{g^2} - \frac{f_s^2 f_{ss}}{f^2}\\
&=\frac{f_s g_s}{g}\left(\frac{g_{ss}}{g}-\frac{f_{ss}}{f}\right)
+\frac{f_s\big\{f(g_s-f_s)+f_s(f-g)\big\}}{fg}\,\frac{f_{ss}}{f}\\
&\geq-\frac{C}{f\sqrt{T-t}}-C\,\frac{f_{ss}}{f}.
\end{align*}
Combining this estimate with Lemma \ref{lem-lower}, we obtain
\begin{equation}
\label{eq-est4}
-4f\log f\left(\frac{g_s^2 f_{ss}}{g^2} + \frac{f_s g_s g_{ss}}{g^2} - \frac{f_s^2 f_{ss}}{f^2}\right)
\geq C\,\frac{\log(T-t)+F}{\sqrt{T-t}}.
\end{equation}

To proceed, we  assume that $F_{\min}(t)$ is attained at an interior point of $\Omega$.
Using inequalities~\eqref{eq-est1}--\eqref{eq-est4} to estimate the right-hand side of
equation~\eqref{eq-F}, one obtains
\[
\frac{d}{dt}F_{\min} \geq C\,\frac{\log(T-t)+F}{\sqrt{T-t}} - 2 f_{ss}\log f\left(\frac{f^3}{g^4 \log f} + f_{ss}\right).
\]
If $\frac{f^3}{g^4\log f} + f_{ss} \le 0$, then $ff_{ss}|\log f|\leq\frac{f^4}{g^4}$, and so  $F\geq-1$.
Otherwise, we have
\begin{equation*}
\label{Fminderiv}
\frac{d}{dt}F_{\min} \geq C\, \frac{F_{\min}+\log(T-t)}{\sqrt{T-t}}.
\end{equation*}
This inequality, together with the maximum principle, implies that
\begin{align*}
F_{\min}(t)&\geq e^{-C'\sqrt{T-t}}\left\{e^{C'\sqrt{T-t_\delta}}F_{\min}(t_\delta)
+C\int_{t_\delta}^t\frac{\log(T-\tau)\,e^{C'\sqrt{T-\tau}}}
{\sqrt{T-\tau}}\mathrm d\tau\right\}\\
&\geq e^{C'(\sqrt{T-t_\delta}-\sqrt{T-t})}F_{\min}(t_\delta)-C''.
\end{align*}
Since, as noted above, $F_{\min}(t_\delta)\geq C\log(T-t_\delta)/\sqrt{T-t_\delta}$, the proof is complete.
\end{proof}

Using  Lemmas~\ref{lem-upper} and \ref{lem-lower}, we obtain the following
consequence of Lemma~\ref{lem-cyl-est}.

\begin{corollary}	\label{SmallCurvature}
In the neighborhood $\Omega$ of the smallest neck(s), where the sectional curvature
$\kappa_{01}$ is negative, the scale-invariant quantities $(T-t)|\kappa_{01}|$ and $(T-t)|\kappa_{02}|$ satisfy
\[
(T-t)|\kappa_{01}|\leq \frac{C}{|\log(T-t)|} \qquad\mbox{and}\qquad (T-t)|\kappa_{02}| \leq \frac{C}{|\log(T-t)|}.
\]
\end{corollary}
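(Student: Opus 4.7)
The plan is to extract the bound on $\kappa_{01}$ directly from the inequality $F\geq -C$ of Lemma~\ref{lem-cyl-est} by combining it with the two-sided estimate $c\sqrt{T-t}\leq\dn M\leq C\sqrt{T-t}$ coming from Lemmas~\ref{lem-upper}--\ref{lem-lower}, and then to bootstrap to the bound on $\kappa_{02}$ via Lemma~\ref{lem-sharper-k01}. No new evolution equation is needed.

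In the subregion of $\Omega$ where $\kappa_{01}<0$, formula~\eqref{Kurv01} gives $f_{ss}>0$, and the defining inequality $f_{ss}\log(f/\delta)<0$ of $\Omega$ then forces $f<\delta<1$, so $\log f<0$. Consequently Lemma~\ref{lem-cyl-est} rewrites as $ff_{ss}|\log f|\leq C$, i.e.
\[
|\kappa_{01}|=\frac{f_{ss}}{f}\leq\frac{C}{f^2\,|\log f|}.
\]
To estimate the right-hand side uniformly, I would observe that the function $f\mapsto 1/(f^2|\log f|)$ is monotonically decreasing on $(0,e^{-1/2})$; for $t$ sufficiently close to $T$, the range of values taken by $f$ in this region, namely $[c\sqrt{T-t},\delta]$, lies entirely within that interval of monotonicity. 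Hence the supremum of $1/(f^2|\log f|)$ is attained at the smallest value of $f$, which by Lemma~\ref{lem-lower} satisfies $f\geq c\sqrt{T-t}$, yielding
\[
|\kappa_{01}|\leq\frac{C}{c^2(T-t)\,|\log(c\sqrt{T-t})|}\leq\frac{C'}{(T-t)\,|\log(T-t)|}.
\]
Multiplying by $T-t$ gives the claimed bound for $\kappa_{01}$.

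For $\kappa_{02}$, I would combine the triangle inequality with Lemma~\ref{lem-sharper-k01} and Lemma~\ref{lem-lower} to obtain $|\kappa_{02}|\leq|\kappa_{01}|+C/\dn M\leq|\kappa_{01}|+C'/\sqrt{T-t}$, and then multiply by $T-t$ to get
\[
(T-t)\,|\kappa_{02}|\leq(T-t)\,|\kappa_{01}|+C'\sqrt{T-t}\leq\frac{C}{|\log(T-t)|}+C'\sqrt{T-t}.
\]
Because $\sqrt{T-t}\,|\log(T-t)|\to 0$ as $t\nearrow T$, the second term is absorbed into the first after enlarging the constant. There is no substantive obstacle in this argument; the only point that requires a moment's care is the identification of the region $\Omega\cap\{\kappa_{01}<0\}$ with a neighborhood of the smallest neck(s), which is immediate from the definition of $\Omega$ together with \eqref{Kurv01}.
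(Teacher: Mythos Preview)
Your argument is correct and follows the same route as the paper's proof: the $\kappa_{01}$ bound is extracted from $F\ge -C$ together with the lower bound $f\ge\dn M\ge c\sqrt{T-t}$ of Lemma~\ref{lem-lower}, and the $\kappa_{02}$ bound comes from the triangle inequality with Lemma~\ref{lem-sharper-k01}. Your explicit monotonicity step for $f\mapsto 1/(f^2|\log f|)$ simply makes precise what the paper summarizes in one line; the paper also cites Lemma~\ref{lem-upper}, but as your argument shows, only the lower bound on $\dn M$ is actually needed once one works pointwise in $f$.
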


\begin{proof}
By Lemmas \ref{lem-upper} and \ref{lem-lower}, the estimate for $\kappa_{01}$ is a straightforward
consequence of Lemma \ref{lem-cyl-est}. Then combining Lemma~\ref{lem-sharper-k01} with this
estimate for  $\kappa_{01}$,  we obtain 
\begin{align*}
(T-t) |\kappa_{02}| &\le (T-t) |\kappa_{01}| + (T-t) |\kappa_{01} - \kappa_{02}|\\
&\le \frac{C}{|\log(T-t)|} + C\sqrt{T-t}\\
&\le \frac{C}{|\log(T-t)|}.
\end{align*}
\end{proof}

We now prove cylindricality at a singularity. Without loss of generality, we confine our considerations
to a single component of $\Omega$. We choose $\xi_1(t)$ such that $f(s(\xi_1(t),t),t)=\dn M(t)$ in that
component for all times $t$ sufficiently close to $T$, and we define arclength from the neck by
\[
S(\xi,t):=s(\xi,t)-s(\xi_1(t),t).
\]

\begin{lemma}	\label{Cylinder}
There exist uniform constants $0<\ve<1$ and $c,\,C<\infty$ such that for all times $t$
sufficiently close to $T$, one has
\[
1\leq\frac{f}{\dn M}\leq1+C\,\frac{(S/\dn M)^2}{\big|\log\dn M\big|}
\qquad\mbox{and}\qquad
1\leq\frac{g}{\dn M}\leq \big(1+o(1)\big)\left(1+C\,\frac{(S/\dn M)^2}{\big|\log\dn M\big|}\right)
\]
for $|S|\leq c\dn M\sqrt{|\log\dn M|}$, and
\[
\frac{f}{\dn M}+\frac{g}{\dn M}\leq C\,\frac{|S/\dn M|}{\sqrt{|\log\dn M|}}
\sqrt{\log\left(\frac{|S/\dn M|}{\sqrt{|\log \dn M|}}\right)}
\]
for $c\dn M\sqrt{|\log\dn M|}\leq|S|\leq\dn M^{1-\ve}$.
\end{lemma}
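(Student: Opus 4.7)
The proof combines the key inequality $ff_{ss}|\log f|\le C$ in $\Omega$ (from Lemma~\ref{lem-cyl-est}, using that $f_{ss}>0$ and $\log f<0$ at a smallest neck) with the gradient bound $f_s^2\le 1+C\sqrt{T-t}$ (Lemma~\ref{lem-fs-1}) and the neck conditions $f=\dn M$, $f_s=0$ at $S=0$. These three ingredients reduce the problem, at each fixed $t$ near $T$, to an essentially one-variable ODE comparison for $f$ viewed as a function of $S$. The estimates for $g$ will then follow, in both regimes, from Lemma~\ref{lem-sharper-est}, which gives $g\le f+Cf^2=f(1+o(1))$ as $t\nearrow T$.

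For the inner region $|S|\le c\dn M\sqrt{|\log\dn M|}$, I would argue by bootstrap. Assume $f\le 2\dn M$ on this interval; since $x\mapsto x|\log x|$ is increasing for $0<x<1/e$, this assumption upgrades $f_{ss}\le C/(f|\log f|)$ to $f_{ss}\le C/(\dn M|\log\dn M|)$. Two integrations from $S=0$ then yield
\[
f(S)-\dn M\le\frac{CS^2}{2\,\dn M\,|\log\dn M|},
\]
so that $f/\dn M\le 1+C(S/\dn M)^2/|\log\dn M|$. For $c$ chosen small enough, the right-hand side is strictly less than $2$ on the whole interval, closing the bootstrap and establishing the inner estimate on $f$.

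For the outer region $c\dn M\sqrt{|\log\dn M|}\le|S|\le\dn M^{1-\ve}$, I would use the Sturmian theorem to conclude that for $t$ near $T$ the restriction of $f$ to one side of the smallest neck is monotone, and work on the side $S>0$ where $f_s>0$. Multiplying $f_{ss}\le C/(f|\log f|)$ by $2f_s$ and recognizing the right-hand side as $-2C\,(\log|\log f|)_s$, integration from $S=0$ gives
\[
f_s^2\le 2C\log\frac{|\log\dn M|}{|\log f|}.
\]
Setting $\sigma=S/\dn M$ and $\eta=f/\dn M$ and using the expansion $\log(|\log\dn M|/|\log f|)\le C\log\eta/|\log\dn M|$ (valid since $\log\eta\le\ve|\log\dn M|$ in the outer region), this becomes $(d\eta/d\sigma)^2\le C\log\eta/|\log\dn M|$. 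Separating variables and using $\int_1^\eta d\eta'/\sqrt{\log\eta'}\sim\eta/\sqrt{\log\eta}$ as $\eta\to\infty$ produces the implicit bound $\eta/\sqrt{\log\eta}\le C|\sigma|/\sqrt{|\log\dn M|}$, which, inverted with $\log\eta\sim\log(|\sigma|/\sqrt{|\log\dn M|})$, delivers the claimed outer estimate.

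The main technical obstacle is the outer integration, whose self-referential character is cleanest to handle by a continuity argument: one extends the bound forward along $S$ on the maximal interval where it holds, verifies strictness at the endpoint via the derived differential inequality, and concludes that this interval exhausts the outer region. One must also choose $\ve$ small enough that $(d\eta/d\sigma)^2\le C\log\eta/|\log\dn M|$ remains binding throughout the outer region rather than being superseded by the baseline $f_s^2\le 1+o(1)$, and verify that the two regimes match compatibly at $|S|\sim c\dn M\sqrt{|\log\dn M|}$.
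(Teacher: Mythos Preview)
Your proposal is correct and follows essentially the same approach as the paper: both arguments integrate the key bound $ff_{ss}|\log f|\le C$ from Lemma~\ref{lem-cyl-est} once to control $f_s^2$ in terms of $\log(|\log\dn M|/|\log f|)$, then integrate again (by separation of variables in $f$) to obtain the inner and outer profiles, and finally transfer to $g$ via Lemma~\ref{lem-sharper-est}. The only cosmetic difference is that the paper packages both regimes into a single integral inequality $\sqrt C\,S\ge\dn M\int_1^{f/\dn M}\mathrm d\vp/\sqrt{\log\vp}$ and cites \cite[Proposition~9.3]{AK04} to unpack it, whereas you treat the inner region by a direct bootstrap on $f_{ss}$; your invocation of the Sturmian theorem is unnecessary, since $f_s>0$ for $S>0$ already follows from $f_{ss}>0$ in $\Omega$ and $f_s(0)=0$.
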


\begin{proof}
We carry out  the argument for the side of the neck on which $S\geq0$; the other side is treated analogously.
Because $f_s>0$ where $S>0$, we can use $f$ as a coordinate there. More precisely, we use $\ell:=\log f$.
Then because $\frac{ds}{d\ell}=f/f_s$, we can state the conclusion of Lemma~\ref{lem-cyl-est} as
\[
\frac{\partial}{\partial\ell}(f_s^2)=2ff_{ss}\leq-\frac{C}{\ell}.
\]
Integrating this inequality and using the calculus fact that $\log x\leq x-1$ for $x\geq1$, we obtain
\[
f_s^2\leq C\log\left(\frac{\log\dn M}{\log f}\right)\leq C\left(\frac{\log\dn M}{\log f}-1\right).
\]
This estimate implies that
\[
\sqrt{C}\,\frac{dS}{d\ell}\geq\frac{f}{\sqrt{\frac{\log\dn M}{\log f}-1}}
=\frac{\frac{df}{d\ell}}{\sqrt{\frac{\log\dn M}{\ell}-1}},
\]
which upon another integration yields
\[
\sqrt{C}\,S\geq\int_{\dn M}^f \frac{d\tilde f}{\sqrt{\frac{\log\dn M}{\log \tilde f}-1}}.
\]
We change the variable of integration to $\vp=\tilde f/\dn M$, obtaining
\[
\sqrt{C}\,S\geq\dn M\int_{1}^{f/\dn M}\sqrt{\frac{-\log\dn M-\log\vp}{\log\vp}}\;\mathrm d\vp.
\]
Restricting to a smaller neighborhood of the neck if necessary so that $f\leq\dn M^{-3/4}$,
we ensure that $\sqrt{-\log\dn M-\log\vp}\geq\frac12\sqrt{-\log\dn M}$ and so obtain the
simpler estimate
\[
2\sqrt{C}\,\frac{S}{\dn M\sqrt{-\log\dn M}}\geq\int_{1}^{f/\dn M}
\frac{\mathrm d\vp}{\sqrt{\log\vp}}.
\]
As observed in Proposition~9.3 of \cite{AK04}, this inequality implies that
\[\frac{f}{\dn M}\leq1+C'\,\frac{(S/\dn M)^2}{\big|\log\dn M\big|}
\]
for $S\leq c\dn M\sqrt{|\log\dn M|}$, and
\begin{equation}	\label{Big-f}
\frac{f}{\dn M}\leq C''\,\frac{S/\dn M}{\sqrt{|\log\dn M|}}
\sqrt{\log\left(\frac{S/\dn M}{\sqrt{|\log \dn M|}}\right)}
\end{equation}
for larger values of $S$.

To obtain the estimates for $g$, we argue as follows. Because $f$ is monotone increasing
moving away from the neck in $\Omega$, we may use estimate~\eqref{Big-f} to see that if
$S\leq\dn M^{1-\ve}$ for $\ve\in(0,1)$, then $f=o(1)$ as $\dn M\searrow0$. Hence by
Lemma~\ref{lem-sharper-est}, we obtain $g\leq(1+Cf)f\leq\big(1+o(1)\big)f$ as
$\dn M\searrow0$.
\end{proof}

Lemma~\ref{lem-lower}, Corollary~\ref{SmallCurvature}, and Lemma~\ref{Cylinder} imply that a
Type-I blowup of the metric,
\[
\tilde G:=(T-t)^{-1}G,
\]
must converge near the singularity to the shrinking cylinder soliton. It follows that
\[
\dn M=\big(1+o(1)\big)\,2\sqrt{T-t}.
\]
If we now denote  the parabolically-rescaled distance from the neck by
\[
\sigma:=\frac{S}{\sqrt{T-t}},
\]
then  the conclusion of Lemma~\ref{Cylinder} may be  recast as follows.

\begin{corollary}	\label{NicestYet}
There exist uniform constants $0<\ve<1$ and $c,\,C<\infty$ such that as $t\nearrow T$,
the estimates
\[
1+o(1)\leq\frac{f}{2\sqrt{T-t}}\leq1+C\,\frac{\sigma^2}{|\log(T-t)|}
\]
and
\[
1+o(1)\leq\frac{g}{2\sqrt{T-t}}\leq\big(1+o(1)\big)
\left(1+C\,\frac{\sigma^2}{|\log(T-t)|}\right)
\]
hold for $|\sigma|\leq c\sqrt{|\log(T-t)|}$, and the estimate
\[
\frac{f}{\sqrt{T-t}}+\frac{g}{\sqrt{T-t}}\leq C\,\frac{|\sigma|}{\sqrt{|\log(T-t)|}}
\sqrt{\log\left(\frac{|\sigma|}{\sqrt{|\log (T-t)|}}\right)}
\]
holds for $c\sqrt{|\log(T-t)|}\leq|\sigma|\leq(T-t)^{-\ve/2}$.
\end{corollary}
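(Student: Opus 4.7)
The proof proposal is to obtain Corollary~\ref{NicestYet} as a rescaling of Lemma~\ref{Cylinder}, once the asymptotic
\[
\dn M(t) = \bigl(1+o(1)\bigr)\,2\sqrt{T-t}
\]
has been established. Since all of the hard analytic work is already contained in Lemma~\ref{Cylinder}, the corollary amounts to a change of variables from $(S,\dn M)$ to $(\sigma,T-t)$ plus a uniform identification of the blowup limit.

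The first task is to prove the cylinder asymptotic for $\dn M$. The plan is to dilate parabolically at a point $\xi_1(t)$ realizing the smallest neck. By Lemma~\ref{lem-lower} the rescaled metric $\tilde G = (T-t)^{-1}G$ has radii $\tilde f,\tilde g$ uniformly bounded below, while Corollary~\ref{BoundVertical} and Corollary~\ref{SmallCurvature} give, at the neck, the uniform curvature bounds
\[
(T-t)\bigl(|\kappa_{01}|+|\kappa_{02}|+|\kappa_{12}|+|\kappa_{23}|\bigr) \ls 1,
\]
with the mixed curvatures in fact $o(1)$. This is enough smoothness and noncollapsing to extract a smooth Cheeger--Hamilton limit of $\tilde G$ on a parabolic neighborhood of $\xi_1(t)$. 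Lemma~\ref{lem-sharper-est}, Lemma~\ref{lem-sharper-der} and Lemma~\ref{lem-sharper-k01} imply that the limit satisfies $\tilde f = \tilde g$, $\tilde f_s = \tilde g_s$, and $\tilde\kappa_{01}=\tilde\kappa_{02}=0$ on the portion $|S|\le c\dn M\sqrt{|\log\dn M|}$ where Lemma~\ref{Cylinder} forces $\tilde f,\tilde g\to 1$. Thus the limit is the round shrinking cylinder soliton $(\mb R\times\mc S^3,\, dS^2 + 4(T-t)g_{\mr{round}})$, whose $\mc S^3$ factor has radius $2\sqrt{T-t}$. In particular, $\dn M/(2\sqrt{T-t})\to 1$.

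With this in hand, the second task is pure bookkeeping. From $\dn M = (1+o(1))\cdot 2\sqrt{T-t}$ one computes
\[
\frac{S}{\dn M} = \frac{\sigma}{2}\bigl(1+o(1)\bigr)\qquad\text{and}\qquad
\bigl|\log\dn M\bigr| = \tfrac{1}{2}\bigl|\log(T-t)\bigr|\bigl(1+o(1)\bigr).
\]
Substituting into the inequalities of Lemma~\ref{Cylinder} and then dividing through by $2\sqrt{T-t}/\dn M = 1+o(1)$, the lower bound $f/\dn M\ge 1$ becomes $f/(2\sqrt{T-t})\ge 1+o(1)$, and the upper bound $f/\dn M\le 1+C(S/\dn M)^2/|\log\dn M|$ becomes
\[
\frac{f}{2\sqrt{T-t}} \le \bigl(1+o(1)\bigr)\left(1+C'\,\frac{\sigma^2}{|\log(T-t)|}\right),
\]
which can be absorbed into the stated $1+C\sigma^2/|\log(T-t)|$ after enlarging $C$. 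The analogous substitution gives the $g$-estimate, while the condition $|S|\le c\dn M\sqrt{|\log\dn M|}$ translates to $|\sigma|\le c'\sqrt{|\log(T-t)|}$ and $|S|\le\dn M^{1-\ve}$ translates (using $\dn M\sim 2\sqrt{T-t}$) to $|\sigma|\le (T-t)^{-\ve/2}$ up to constants. The far-field estimate transforms identically once the same substitutions are inserted into the right-hand side of \eqref{Big-f}.

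The main technical point is the first task, namely verifying that the Type-I blowup limit really is the round shrinking cylinder rather than some other shrinker; this relies on the scale-broken improvements $f-g=O(M^2)$ and $\kappa_{01}-\kappa_{02}=O(1/\dn M)$, which are precisely what Section~\ref{SharperEstimates} provides and is the reason Assumption~\ref{StrongerAssumption} is invoked. Once this identification is done, the remainder of the proof is algebraic.
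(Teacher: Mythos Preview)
Your proposal is correct and follows essentially the same route as the paper. The paper's argument is just the short paragraph preceding the corollary: it cites Lemma~\ref{lem-lower}, Corollary~\ref{SmallCurvature}, and Lemma~\ref{Cylinder} to conclude that the Type-I blowup $\tilde G=(T-t)^{-1}G$ converges near the neck to the shrinking cylinder soliton, deduces $\dn M=(1+o(1))\,2\sqrt{T-t}$, and then says the corollary is Lemma~\ref{Cylinder} ``recast'' in the variable $\sigma=S/\sqrt{T-t}$. You carry out exactly this program, supplying more detail on the identification of the limit (invoking Cheeger--Hamilton compactness and the scale-broken estimates of Section~\ref{SharperEstimates} explicitly) and spelling out the change-of-variables bookkeeping; the paper is terser but relies on the same ingredients.
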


\section{Estimates for reflection-symmetric solutions}
\label{reflection}

In this section, we derive our sharpest estimates for the eccentricity of a Ricci flow solution
near a developing neckpinch, more than doubling the decay rate for the scale-invariant
quantity $|f-g|/\sqrt{T-t}$ that we have obtained above. To accomplish this, we use ideas motivated
by the formal asymptotics outlined in Appendix~\ref{FormalNeckpinch}, following
the approach carried out rigorously in \cite{AK07}. To make the arguments rigorous
here, we impose Assumption~\ref{Reflection} from Section \ref{Intro},
adding a technical hypothesis that guarantees that each solution under
consideration is reflection symmetric, with its smallest neck occurring at $s=0$.
In this approach, we find that the evolution of the quantity we study below, which controls
$|f-g|$, is governed by a favorable linear term and by a ``forcing function'' that represents the
nonlinear terms involved. As in \cite{AK07}, we do not quite achieve the optimal decay predicted
by the linear term, but we are able to prove decay at the rate of the forcing function.

Our first step, which does not need reflection symmetry, is a mild improvement to 
Lemma~\ref{lem-sharper-est}, to be used below.

\begin{lemma}	\label{LogFactor}
If a solution $\big(\mc S^1\times\mc S^3,G(t)\big)$ satisfies Assumption \ref{StrongerAssumption},
then there exists a uniform constant $C$ such that for as long as the flow exists, one has
\[
g-f\leq Cf^3|\log(T-t)|.
\]
\end{lemma}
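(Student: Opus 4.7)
Writing $h := 1/f - 1/g$ as in Lemma~\ref{lem-sharper-est}, and using $g-f = fgh$ together with $f \le g \le (1+C_0)f$ (from Lemma~\ref{lem-sharper-est} and Assumption~\ref{StrongerAssumption}), the claimed bound is equivalent to $h \le K f\,|\log(T-t)|$ for some constant $K$.  My plan is to apply the parabolic maximum principle to $\Phi := h - B$, where
\[
B(s,t) := K f(s,t)\,\bigl(\tau_0 + |\log(T-t)|\bigr),
\]
with $K,\tau_0$ chosen large enough (relative to $C_0$ and $\min_s f(\cdot,0)$) that $\Phi(\cdot,0) \le 0$.

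Combining \eqref{timederivh} with $f_t - \Delta f = -f_s^2/f - 2f^3/g^4$ from \eqref{SRFsystem}, a direct computation yields
\[
\Phi_t - \Delta\Phi = (f-g)\Big(\tfrac{2f+4g}{g^5} - \tfrac{f_s^2}{f^4}\Big) + K\bigl(\tau_0+|\log(T-t)|\bigr)\Big(\tfrac{f_s^2}{f}+\tfrac{2f^3}{g^4}\Big) - \tfrac{Kf}{T-t}.
\]
The positivity argument in the proof of Lemma~\ref{lem-sharper-est} gives $(2f+4g)/g^5 - f_s^2/f^4 \ge c/f^4$ for a uniform $c>0$.  At a positive maximum of $\Phi$, the inequality $h>B$ translates to $g-f > Kf^2 g\,(\tau_0+|\log(T-t)|)$, and feeding this back into the reaction supplies the strong negative contribution
\[
-(g-f)\Big(\tfrac{2f+4g}{g^5}-\tfrac{f_s^2}{f^4}\Big) \;\le\; -cK\,(\tau_0+|\log(T-t)|)\,\tfrac{g}{f^2}.
\]
Together with the $-Kf/(T-t)$ piece, these negative contributions must absorb the positive $K(\tau_0+|\log(T-t)|)(f_s^2/f + 2f^3/g^4)$ term.

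Invoking Lemma~\ref{lem-lower} ($f \ge c\sqrt{T-t}$) and Lemma~\ref{lem-fs-1} ($f_s^2 \le 1+C\sqrt{T-t}$), the problematic positive contribution $K|\log(T-t)|\,f_s^2/f$ and the favorable reaction $cK|\log(T-t)|/f$ both scale like $K|\log(T-t)|/\sqrt{T-t}$ near the neck, of matching order but with different coefficients.  The strict inequality $\Phi_t - \Delta\Phi < 0$ at the positive maximum then follows from the positivity constant $c$ guaranteed by Assumption~\ref{StrongerAssumption}, possibly after enlarging $K$ and $\tau_0$ to absorb the lower-order $2f^3/g^4$ piece and the $\tau_0$-independent corrections in $f_s^2 \le 1 + C\sqrt{T-t}$.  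The maximum principle then yields $\Phi \le 0$, i.e., $h \le B$, which is the claim.

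The main obstacle is precisely this tight constant balance near the neck: the decay from the reaction and the forcing from differentiating the barrier $Kf|\log(T-t)|$ are of exactly the same order $|\log(T-t)|/\sqrt{T-t}$.  This tightness is presumably why the bound carries the extra $|\log(T-t)|$ factor rather than achieving the cleaner scale-invariant estimate $g-f \le Cf^3$ that a looser counting would suggest.
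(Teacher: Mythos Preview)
Your barrier approach has a genuine gap in the constant comparison you flag but do not actually resolve. At a positive interior maximum of $\Phi=h-K\lambda f$, your computation gives
\[
\Phi_t-\Delta\Phi \;<\; \frac{K\lambda}{f}\Big[-(g/f)\big(A-f_s^2\big)+f_s^2+2(f/g)^4\Big]-\frac{Kf}{T-t},
\]
where $A:=2(f/g)^5+4(f/g)^4$. Both competing leading terms scale as $K\lambda/f$, so enlarging $K$ or $\tau_0$ cannot help: the sign is decided entirely by the bracket. With $g/f\geq1$ the bracket is $\leq 2f_s^2+2-A$, so you need $A>2f_s^2+2$. Assumption~\ref{StrongerAssumption} only guarantees $A\geq 2(1-\ve)^5+4(1-\ve)^4>4/3$, while $2f_s^2+2$ can be as large as $14/3$ (using $|f_s|\leq 2/\sqrt3$) or close to $4$ (using Lemma~\ref{lem-fs-1}). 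For $\ve=1/4$ one has $2(1-\ve)^5+4(1-\ve)^4\approx 1.74$, so the bracket is positive and the reaction does \emph{not} absorb the barrier's $f_s^2/f$ contribution. The improved pointwise bounds $f/g\geq 1-Cf$ and $f_s^2\leq 1+Cf$ (from Lemmas~\ref{lem-sharper-est}, \ref{lem-fs-1}, \ref{lem-lower}) do force $A>2f_s^2+2$ when $f$ is small, but the maximum of $\Phi$ need not occur where $f$ is small. Away from the neck (say $f\geq\delta_0$) the $-Kf/(T-t)$ term would need to dominate $K\lambda/f$, which requires $\lambda(T-t)$ to be uniformly small --- a condition on $T$ and the initial data that is not guaranteed and cannot be arranged by choosing $K,\tau_0$.

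The paper sidesteps this difficulty by working with $P:=f^{-2}-g^{-2}$ rather than $h=f^{-1}-g^{-1}$ and using no barrier at all. The reaction term in the $P$-evolution factors as $4(g^2-f^2)(g^6f_s^2-f^6)/(f^6g^6)$, and the second factor $g^6(f_s^2-1)+(g^6-f^6)$ is controlled directly by Lemmas~\ref{lem-fs-1} and~\ref{lem-sharper-est}, yielding $\tfrac{d}{dt}P_{\max}\leq C/(T-t)$ outright. Integration then produces the $|\log(T-t)|$ factor; no delicate coefficient matching is needed.
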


\begin{proof}
We define $P:=f^{-2}-g^{-2}>0$ and compute that
\[
P_t=\Delta P+g^2\left(\frac{4f_s^2}{f^3}+P_s\right)P_s
+\frac{4(g^2-f^2)(g^6f_s^2-f^6)}{f^6g^6}.
\]
Therefore, using the fact that $f\leq g$, one has
\[
\frac{d}{dt}P_{\max}\leq\frac{4(g^2-f^2)}{f^6g^6}
\Big\{g^6-f^6+g^6(f_s^2-1)\Big\}.
\]
Now by Lemma~ \ref{lem-sharper-est}, one has
$g^6-f^6=(g^3+f^3)(g^2+fg+f^2)(g-f)\leq Cf^7$.
Then using Lemma~\ref{lem-fs-1}, Lemma~\ref{lem-sharper-est} again, and finally
Lemma~\ref{lem-lower}, one obtains
\begin{align*}
\frac{d}{dt}P_{\max}
&\leq C\frac{g^2-f^2}{f^6g^6}\big\{f^7+g^6\sqrt{T-t}\big\}\\
&\leq C\left\{\frac{1}{f^2}+\frac{\sqrt{T-t}}{f^3}\right\}\\
&\leq \frac{C}{T-t}.
\end{align*}
Integrating this yields $P_{\max}\leq C\big\{1-\log(T-t)\big\}\leq C'|\log(T-t)|$, whereupon
unwrapping the definition of $P$ and using Lemma~\ref{C0pinching} gives the result in the form
\[
g-f=\frac{f^2g^2}{f+g}P\leq Cf^3P_{\max}\leq C'f^3|\log(T-t)|.
\]
\end{proof}

Next we perform a parabolic dilation as outlined in Appendix~\ref{FormalNeckpinch},
the purpose of which is to facilitate analysis of the solution very near the developing
singularity, following the approach of \cite{AK07}. We introduce new time $\tau:=-\log(T-t)$
and space  $\sigma:=e^{\tau/2}s$ variables. Then we consider the quantity $x(\sigma,\tau)$
defined in equation~\eqref{Define-xy} (found in Appendix \ref{FormalNeckpinch}), which is
\[
x=\frac12e^{\tau/2}(f-g)=\frac{f-g}{2\sqrt{T-t}}.
\]
As computed in Appendix~\ref{FormalNeckpinch}, the evolution of $x$ is governed  by
\[
x_\tau=(\mc A-3)x+N(x),
\]
where the familiar linear operator
\[
\mc A:=\frac{\partial^2}{\partial\sigma^2}-\frac{\sigma}{2}\frac{\partial}{\partial\sigma}+1
\]
generates the quantum harmonic oscillator. The nonlinear quantity
$N(x)$ is
\begin{multline}		\label{Define-N}
N(x) := \frac{\vp_\sigma \psi_\sigma}{(1+\vp) (1 + \psi)}x + \mc I x_{\sigma}\\
	- \frac{\vp^2 + 2\vp(2 + \psi) - \psi\big\{14 + \psi[28 + 5\psi(4 +\psi)]\big\}}{2(1 + \psi)^4}x,
\end{multline}
where
\[
\vp:=u-1:=\frac{e^{\tau/2}f}{2}-1,\qquad\qquad\psi:=v-1:=\frac{e^{\tau/2}g}{2}-1,
\]
and $\mc I(\sigma,\tau)$ is the nonlocal term
\begin{equation}	\label{Define-I}
\mc I:= \int_0^\sigma \left(\frac{u_{\bar\sigma \bar\sigma}}{u}
	+2\frac{v_{\bar\sigma \bar\sigma}}{v}\right)\mr d\bar\sigma.
\end{equation}

In order to estimate the nonlinear terms above, we need the following analog of
Lemma~4 from \cite{AK07}. Note that this is the first time we use our strongest
assumption on the initial data, that of reflection symmetry.

\begin{lemma}	\label{C0C1bounds}
If a solution $\big(\mc S^1\times\mc S^3,G(t)\big)$ satisfies Assumption \ref{Reflection},
then there exist $\ve\in(0,1)$ and $c,C$ such that one has $C^0$ estimates
\begin{align*}
1 - \frac{C}{\tau} &\leq u \leq 1 + C\frac{\sigma^2}{\tau}, \qquad\qquad\; |\sigma| \le c\sqrt{\tau},\\
1 - \frac{C}{\tau} &\leq u \leq C\frac{|\sigma|}{\sqrt{\tau}} \sqrt{\log\frac{|\sigma|}{\sqrt{\tau}}},
	\qquad c\sqrt{\tau} \leq |\sigma| \le e^{\ve\tau},\\ \\
1 - \frac{C}{\tau} &\leq v \leq 1 + C\frac{1+\sigma^2}{\tau}, \qquad\quad\; |\sigma| \le c\sqrt{\tau},\\
1 - \frac{C}{\tau} &\leq v \leq C\frac{|\sigma|}{\sqrt{\tau}} \sqrt{\log\frac{|\sigma|}{\sqrt{\tau}}},
	\qquad c\sqrt{\tau} \leq |\sigma| \le e^{\ve\tau},
\end{align*}
and $C^1$ estimates
\begin{align*}
|u_{\sigma}| + |v_{\sigma}|&\leq C\frac{1+|\sigma|}{\tau},\qquad\qquad\quad|\sigma| \le c\sqrt{\tau},\\
|u_{\sigma}| + |v_{\sigma}|&\leq \frac{C}{\sqrt{\tau}}\, \sqrt{\log \frac{|\sigma|}{\sqrt{\tau}}},
	\qquad\quad c\sqrt{\tau|} \le |\sigma| \le e^{\ve\tau}.
\end{align*}
\end{lemma}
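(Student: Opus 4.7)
The plan is to translate every estimate previously established for $f$, $g$, their spatial derivatives, and the mixed sectional curvatures into the parabolic dilation variables $(\sigma,\tau)$, then exploit reflection symmetry to upgrade the $C^1$ bounds in the inner region. The computational dictionary is
\[
u=\tfrac12 e^{\tau/2}f,\qquad v=\tfrac12 e^{\tau/2}g,\qquad u_\sigma=\tfrac12 f_s,\qquad u_{\sigma\sigma}=\tfrac12 e^{-\tau/2}f_{ss},
\]
together with $\tau=|\log(T-t)|$ and $|\sigma|\le e^{\ve\tau}\Longleftrightarrow|S|\le(T-t)^{1/2-\ve}$, which identifies the outer endpoint here with that of Corollary~\ref{NicestYet}. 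The $C^0$ upper bounds on $u$ are then verbatim rewritings of Corollary~\ref{NicestYet}, and the $C^0$ upper bounds on $v$ follow from $v=u+(g-f)/(2\sqrt{T-t})$ and Lemma~\ref{lem-sharper-est}: the correction is bounded by $C\sqrt{T-t}\,u^2$, which is $O(e^{-\tau/2})$ and hence absorbed into the extra $1/\tau$ distinguishing the $v$-bound from the $u$-bound in the inner region.

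For the $C^0$ lower bounds, the preserved ordering $f\le g$ (Lemma~\ref{Berger}) combined with $f\ge\dn M$ on $\Omega$ (Lemma~\ref{Cylinder}) reduces matters to the refined estimate $\dn M^2\ge 4(T-t)(1-C/\tau)$. At a spatial minimum of $f$ one has $f_s=0$ and $f_{ss}\ge 0$, and Lemma~\ref{lem-cyl-est} together with $\log f<0$ yields $f_{ss}\le C/(\dn M|\log\dn M|)$. Substituting into $f_t=f_{ss}-2f^3/g^4$ and using Lemma~\ref{lem-sharper-est} to write $2\dn M^3/g^4=2/\dn M+O(1)$ at the neck, one obtains almost everywhere
\[
\tfrac{d}{dt}\dn M^2\le-4+\frac{C}{|\log\dn M|}+O(\dn M).
\]
Integrating over $[t,T]$ and using $|\log\dn M|\ge(1-o(1))\tau/2$ (from Lemma~\ref{lem-upper}) gives the claimed rate, whereupon $v\ge u\ge 1-C/\tau$ is immediate.

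For the $C^1$ bounds, reflection symmetry is crucial in the inner region as it supplies the boundary condition $u_\sigma(0,\tau)=v_\sigma(0,\tau)=0$. Corollary~\ref{SmallCurvature} gives $(T-t)|\kappa_{01}|\le C/\tau$ in $\Omega$, so $|f_{ss}|\le Cf/((T-t)\tau)\le C/(\sqrt{T-t}\,\tau)$ once the $C^0$ upper bound is known; hence $|u_{\sigma\sigma}|\le C/\tau$ uniformly for $|\sigma|\le c\sqrt\tau$. Integrating from $0$ yields $|u_\sigma(\sigma,\tau)|\le C|\sigma|/\tau$, and the same argument using $\kappa_{02}$ controls $v_\sigma$. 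In the outer region the curvature integration is too lossy (it would produce exponentially large bounds), so I would revisit the proof of Lemma~\ref{Cylinder}: Lemma~\ref{lem-cyl-est} gives $\partial_\ell(f_s^2)\le -C/\ell$ with $\ell=\log f$, integrating to $f_s^2\le C\log(|\log\dn M|/|\log f|)$. The outer $C^0$ bound implies $|\log f|\ge\tau/2-\log(|\sigma|/\sqrt\tau)$ plus lower-order terms, and expanding the logarithm for small argument delivers $f_s^2\le C\log(|\sigma|/\sqrt\tau)/\tau$, which is the stated bound on $u_\sigma$; then $v_\sigma$ follows by writing $v_\sigma=u_\sigma+(g_s-f_s)/2$ and applying Lemma~\ref{lem-sharper-der}.

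The main obstacle I anticipate is the sharpened lower bound $\dn M^2\ge 4(T-t)(1-C/\tau)$: this is where Lemma~\ref{lem-cyl-est} genuinely does work, and reflection symmetry enters essentially by ensuring that the set $\Omega$ consists of a single connected component centered at $\sigma=0$ along which $f_s=0$ throughout. Without this, the ODE analysis of $\dn M^2$ would need to track multiple migrating minima as in the Lipschitz-almost-everywhere argument of Lemma~\ref{lem-upper}, which seems to lose the logarithmic improvement. A secondary subtlety is the match between the inner and outer $C^1$ estimates at $|\sigma|\asymp\sqrt\tau$, where both approaches give comparable $O(1/\sqrt\tau)$ bounds and must be shown to be consistent with one another.
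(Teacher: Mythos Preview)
Your approach is correct and essentially matches the paper's: upper $C^0$ bounds from Corollary~\ref{NicestYet}, the sharpened lower bound from the ODE $\frac12(f^2)_t\le-2+C/\tau$ at the neck via Lemma~\ref{lem-cyl-est}, and $C^1$ bounds by integrating the $C^2$ estimate from $\sigma=0$. Your shortcut $v\ge u\ge 1-C/\tau$ for the $v$ lower bound is in fact cleaner than the paper's, which derives a separate ODE for $(g^2)_t$ using Lemma~\ref{lem-sharper-k01}. Your final-paragraph worry is misplaced, though: the lower-bound ODE at a spatial minimum of $f$ works in the Lipschitz-a.e.\ sense regardless of how many minima there are, so reflection symmetry is genuinely needed only where you first flagged it --- to guarantee $v_\sigma(0)=0$, since without symmetry the minimum of $g$ need not sit at $\sigma=0$.
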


\begin{proof}
The upper bound for $u$ follows immediately from Corollary~\ref{NicestYet}.

To get the lower bound for $u$, we note that at the center of the neck, Lemma~\ref{lem-cyl-est}
and Lemma~\ref{lem-lower} imply that
\[
ff_{ss}\leq \frac{C}{|\log f|}\leq\frac{C'}{\tau}.
\]
Then using the implication of Lemma~\ref{lem-upper} that
\[
\frac{f^4}{g^4}=1+\frac{f^4-g^4}{g^4}
=1+\frac{(f-g)(f^3+f^2g+fg^2+g^3)}{g^4}\geq1-Cf\geq1-C'e^{-\tau/2},
\]
we observe that at the center of the neck, where $f$ achieves its minimum, one has
\[
\frac12(f^2)_t=ff_t=ff_{ss}-2\frac{f^4}{g^4}\leq-2+\frac{C}{\tau},
\]
which implies after integration that $f^2\geq4(T-t)\big(1-C\tau^{-1}\big)$, hence that
\[
u\geq\sqrt{1-\frac{C}{\tau}}\geq1-\frac{C'}{\tau}.
\]

The upper bound for $v$ at large $|\sigma|$ is implied by Corollary~\ref{NicestYet}.
To get the upper bound at small $|\sigma|$, we note that by Lemma~\ref{lem-sharper-est},
one has $v\leq(1+Cf)u$. But for $|\sigma|\leq c\sqrt\tau$, the upper bound for $u$ implies
that $f\leq Ce^{-\tau/2}$, which in turn implies the estimate.

To get the lower bound for $v$, we note that Lemma~\ref{lem-sharper-k01} implies that
\[
g_{ss}\leq\frac{g}{f}\big(f_{ss}+C\big).
\]
Therefore at the center of the neck, where $g$ also achieves its minimum, we apply Lemmas~\ref{lem-upper},
\ref{lem-sharper-est}, and \ref{lem-cyl-est} to obtain
\begin{align*}
\frac12(g^2)_t&=gg_{ss}+2\frac{f^2-g^2}{g^2}-2\\
&\leq\frac{g^2}{f^2}(ff_{ss})+Cf-2\\
&\leq-2+\frac{C}{\tau}.
\end{align*}
In the final step of the derivation of this estimate, we have used the fact that $f=\dn M\leq Ce^{-\tau/2}$ at the center of the neck. Working with this estimate, we derive the lower bound for $v$ using an argument very similar to that used to obtain the lower bound for $u$.

The derivative bounds in the statement of the Lemma follow readily from the $C^0$ and $C^2$ bounds
we have obtained above, together with  the fact that the definition of $\Omega$ ensures that $u$ and $v$ are
 convex there. For example, unwrapping definitions and using the estimate $ff_{ss}|\log f\,|\leq C$
from Lemma~\ref{lem-cyl-est}, one sees that
\begin{equation}
\label{uu}
uu_{\sigma\sigma}|\log(2e^{-\tau/2}u)|=\frac14 ff_{ss}|\log f|\leq C.
\end{equation}
Then for $|\sigma|\leq c\sqrt\tau$, a region in which Corollary~\ref{NicestYet} implies that
$u=\mc O(1)$, our estimate~\eqref{uu} implies that
$u_{\sigma\sigma}\leq C/\tau$, which after antidifferentiation yields
\[
|u_\sigma|\leq C\frac{|\sigma|}{\tau},\qquad\qquad\big(|\sigma|\leq c\sqrt\tau\big).
\]
The remaining bounds are proved similarly.
\end{proof}

The operator $-\mc A$ is self-adjoint in the Hilbert space
$\mc G:=L^2(\mathbb R;\,e^{-\sigma^2/4}\mathrm d\sigma)$, with discrete spectrum
bounded below by $-1$. We denote the inner product in $\mc G$ by
$\lh\cdot,\cdot\rh$ and the norm by $\|\cdot\rn$.

The quantity $x$ does not belong to $\mc G$ because it
is not defined for all $\sigma$. We remedy this difficulty as follows. Let $\beta$ be a smooth, even,
bump function with $\beta(z)=1$ for $|z|\leq1$ and $\beta(z)=0$ for $|z|\geq2$. We define
\[
X(\sigma,\tau):=\left\{\begin{array}[c]{cc}

\beta(e^{-\ve\tau/2}\sigma)\,x(\sigma,\tau), & \text{for }|\sigma|\leq2e^{\ve\tau/2},\\
0 & \text{for }|\sigma|>2e^{\ve\tau/2},\end{array}\right.
\]
where $\ve$ is the constant from Corollary~\ref{NicestYet}. A computation shows
that
\[
X_\tau=(\mc A-3)X+\beta N(x)+E,
\]
where $E:=\big(\beta_\tau-\beta_{\sigma\sigma}+\frac{\sigma}{2}\beta_\sigma\big)x
-2\beta_\sigma x_\sigma$ denotes the ``error'' induced by $\beta$.

\begin{lemma}	\label{E-estimate}
The quantity $E$ vanishes except for $e^{\ve\tau/2} < |\sigma| <  2e^{\ve\tau/2}$,
and there exists a uniform constant $C$ such that if $E\neq 0$, then
\[
  |E(\sigma,\tau)|\leq C|\sigma|\qquad\mbox{and}\qquad
  \| E (\cdot,\tau) \rn \leq C \exp\left(-e^{\ve\tau/2}\right).
\]
\end{lemma}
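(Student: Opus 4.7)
The plan is to directly differentiate $X = \beta(e^{-\varepsilon\tau/2}\sigma)\,x(\sigma,\tau)$ to identify $E$, then to combine the explicit form of the cutoff with the pointwise estimates proved earlier in the paper. Writing $z := e^{-\varepsilon\tau/2}\sigma$, one has $\beta_\tau = -\tfrac{\varepsilon}{2}\,z\,\beta'(z)$, $\beta_\sigma = e^{-\varepsilon\tau/2}\beta'(z)$, $\tfrac{\sigma}{2}\beta_\sigma = \tfrac{z}{2}\beta'(z)$, and $\beta_{\sigma\sigma} = e^{-\varepsilon\tau}\beta''(z)$. All these derivatives vanish unless $1 \le |z| \le 2$, i.e.\ unless $e^{\varepsilon\tau/2} \le |\sigma| \le 2e^{\varepsilon\tau/2}$, giving the claimed support property. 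On this annulus, $\beta_\tau$ and $\tfrac{\sigma}{2}\beta_\sigma$ are uniformly bounded, while $\beta_\sigma = O(e^{-\varepsilon\tau/2})$ and $\beta_{\sigma\sigma} = O(e^{-\varepsilon\tau})$.

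Next I would bound $|x|$ and $|x_\sigma|$ on the annulus using the prior estimates. From Lemma~\ref{C0pinching}, $|f-g| \le C\min(f,g) \le Cf$, so $|x| \le Cf/\sqrt{T-t}$. From Corollary~\ref{NicestYet} (applied with $\varepsilon$ shrunk slightly so that its range of validity covers $|\sigma|\le 2e^{\varepsilon\tau/2}$), one has $f \le C\sqrt{T-t}\cdot e^{\varepsilon\tau/2}$ on the annulus, whence $|x| \le C e^{\varepsilon\tau/2} \le C|\sigma|$. Similarly, $|x_\sigma| = \tfrac12|(f-g)_s|$ is bounded by $CM \le Cf \le C\sqrt{T-t}\cdot e^{\varepsilon\tau/2}$ via Lemma~\ref{lem-sharper-der} and the same $f$-bound, so $e^{-\varepsilon\tau/2}|x_\sigma| \le C\sqrt{T-t} \le C$. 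Putting these together,
\[
|E| \le \bigl(|\beta_\tau| + |\beta_{\sigma\sigma}| + \tfrac{|\sigma|}{2}|\beta_\sigma|\bigr)|x| + 2|\beta_\sigma||x_\sigma| \le C|x| + C e^{-\varepsilon\tau/2}|x_\sigma| \le C|\sigma|
\]
on the support, proving the pointwise bound.

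For the weighted $L^2$ bound, I would simply integrate. Since $E$ is supported where $|\sigma| \ge e^{\varepsilon\tau/2}$ and $|E|\le C|\sigma|$ there,
\[
\|E(\cdot,\tau)\|_{\mc G}^2 \le \int_{|\sigma|\ge e^{\varepsilon\tau/2}} C\sigma^2 e^{-\sigma^2/4}\,\mathrm d\sigma,
\]
and the Gaussian tail integral is controlled by $C e^{\varepsilon\tau/2}\cdot e^{2\varepsilon\tau}\cdot e^{-e^{\varepsilon\tau}/4}$, a doubly exponentially small quantity. Taking square roots and absorbing polynomial factors into the exponential yields $\|E\|_{\mc G} \le C\exp(-e^{\varepsilon\tau/2})$, which is in fact weaker than what this argument actually produces.

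The only subtle point is ensuring that the refined estimates of Corollary~\ref{NicestYet} and Lemma~\ref{lem-sharper-der} genuinely apply in the slightly enlarged annulus $|\sigma|\le 2e^{\varepsilon\tau/2}$; this is handled by fixing the $\varepsilon$ in the definition of $X$ to be, say, half of the $\varepsilon$ in Corollary~\ref{NicestYet}, so there is a comfortable margin. No deep estimate is needed here—this is essentially a bookkeeping exercise exploiting that the cutoff lives in the fringe region where the Gaussian weight is already extremely small.
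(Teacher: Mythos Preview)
Your proposal is correct and follows essentially the same route as the paper, which simply says the proof is identical to that of Lemma~7 in \cite{AK07} once one has the pointwise bounds of Lemma~\ref{C0C1bounds}. You have written out that computation in full, drawing the needed pointwise bounds on $x$ and $x_\sigma$ from the slightly earlier Lemmas~\ref{C0pinching}, \ref{lem-sharper-der}, and Corollary~\ref{NicestYet} rather than from the consolidated Lemma~\ref{C0C1bounds}; the mechanism (cutoff derivatives supported only on the annulus, crude polynomial growth of $|x|$ and $|x_\sigma|$ there, Gaussian tail dominating everything) is identical.
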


As a consequence of Lemma~\ref{C0C1bounds}, the proof of this result is identical to that of Lemma~7
in \cite{AK07}.
\medskip

We are now ready to estimate the evolution of $\|X\rn^2$. In doing this, we use the fact that for
any function $W$ in the domain of the operator $\mc A$, the divergence form of that operator shows that
$\mc A W = (e^{-\sigma^2/4}W_\sigma)_\sigma\,e^{-\sigma^2/4}$. Consequently one has
\[
-\lh W,\mc A W\rh = \int_{\mathbb R}(W_\sigma^2-W^2)\,e^{-\sigma^2/4}\,\mathrm d\sigma,
\]
and hence
\[
\|W_\sigma\rn^2=\|W\rn^2-\lh W,\mc A W\rh.
\]
Thus we obtain
\begin{align*}
\frac{d}{d\tau}\|X\rn^2
&=2\lh X,X_\tau\rh\\
&=2\lh X,\,\mc AX-3X+\beta N(x)+E\rh\\
&=-4\|X\rn^2-2\|X_\sigma\rn^2+2\lh X,\,\beta N(x)+E\rh.
\end{align*}
We now define $N_0(x):=N(x)-\mc I x_\sigma$, where $N(x)$ is
defined in~\eqref{Define-N} and $\mc I$ is defined in~\eqref{Define-I}.
We also define $E_0:=\beta_\sigma x$. It then follows from Cauchy--Schwarz that we have
\begin{align}
\frac{d}{d\tau}\|X\rn^2
&=-4\|X\rn^2-2\|X_\sigma\rn^2
	+2\lh X,\,\mc I X_\sigma+N_0(x)\rh+2\lh X,\,E-\mc I E_0\rh	\notag\\
&\leq-4\|X\rn^2+\|\mc I X\rn^2
	+2\lh X,\, N_0(x)\rh+2\lh X,\,E-\mc I E_0\rh.	\label{X-evolve}
\end{align}
\medskip

To control $\frac{d}{d\tau}\|X\rn^2$, we start by deriving pointwise bounds
for the nonlinear factors on the right hand side of ~\eqref{X-evolve}.

\begin{lemma}	\label{estimate-N}
If $\big(\mc S^1\times\mc S^3,G(t)\big)$ is a Ricci flow solution satisfying
Assumption~\ref{Reflection}, then for $|\sigma|\leq c\sqrt\tau$, one has
\[
|\mc I|\leq C\left(\frac{1}{\tau}+\frac{|\sigma|^3}{\tau^2}\right)\qquad\mbox{and}\qquad
|N_0|\leq C\left(\frac{1}{\tau}+\frac{\sigma^8}{\tau^4}\right)|x|;
\]
while for $c\sqrt\tau\leq|\sigma|\leq e^{\ve\tau}$, one has
\[
|\mc I|\leq C\frac{|\sigma|}{\tau}\log\frac{|\sigma|}{\sqrt\tau}
\qquad\mbox{and}\qquad
|N_0|\leq C \frac{\sigma^4}{\tau^2}\left(\log\frac{|\sigma|}{\sqrt\tau}\right)^2|x|.
\]
\end{lemma}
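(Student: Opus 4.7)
The plan is to derive both estimates by direct pointwise computation, using the $C^0$ and $C^1$ bounds of Lemma~\ref{C0C1bounds}, the reflection-symmetry identity $u_\sigma(0,\tau)=v_\sigma(0,\tau)=0$ inherited from Assumption~\ref{Reflection}, and the second-derivative control $|u_{\sigma\sigma}|,|v_{\sigma\sigma}|\leq C/\tau$ that follows from Lemma~\ref{lem-cyl-est} combined with the lower bound $u,v\geq 1/2$ available in the relevant region.

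For the nonlocal factor $\mc I$, I would first integrate by parts using the identity
\[
\frac{u_{\bar\sigma\bar\sigma}}{u}=\left(\frac{u_{\bar\sigma}}{u}\right)_{\bar\sigma}+\left(\frac{u_{\bar\sigma}}{u}\right)^{2}
\]
and the analogous one for $v$. Reflection symmetry eliminates the boundary contribution at $\bar\sigma=0$, so
\[
\mc I=\frac{u_\sigma}{u}+2\,\frac{v_\sigma}{v}+\int_0^\sigma\left(\frac{u_{\bar\sigma}^2}{u^2}+2\,\frac{v_{\bar\sigma}^2}{v^2}\right)\mr d\bar\sigma.
\]
Reflection symmetry further upgrades the $C^1$ bound of Lemma~\ref{C0C1bounds} to $|u_\sigma|+|v_\sigma|\leq C|\sigma|/\tau$ in the inner region, obtained by integrating the pointwise bound $|u_{\sigma\sigma}|\leq C/\tau$ from $\bar\sigma=0$. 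Substituting these bounds, the boundary term contributes $O(|\sigma|/\tau)$ and the integral $O(|\sigma|^3/\tau^2)$, which are absorbed into $C(1/\tau+|\sigma|^3/\tau^2)$. In the outer region, substituting the outer $C^0/C^1$ estimates of Lemma~\ref{C0C1bounds} directly into the same decomposition yields the stated bound.

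For $N_0(x)$, I would first substitute $\vp=x+\psi$ into the polynomial appearing in the coefficient of $x$, obtaining
\[
\vp^2+2\vp(2+\psi)-\psi\{14+\psi[28+5\psi(4+\psi)]\}=4x+x^2+4x\psi-10\psi-25\psi^2-20\psi^3-5\psi^4.
\]
Dividing by $-2(1+\psi)^4$, multiplying by $x$, and adding the $\vp_\sigma\psi_\sigma/[(1+\vp)(1+\psi)]\,x$ contribution gives an explicit expression for $N_0$ as a combination of products of $x$, $\psi$, and first derivatives. Bounding these terms using Lemma~\ref{C0C1bounds} and the sharper bound $|x|\leq Cf^3|\log(T-t)|\,e^{\tau/2}/2$ from Lemma~\ref{LogFactor}, and tracking which terms dominate in each sub-region, yields the stated bounds. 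The outer-region estimate follows by substituting the outer bounds from Lemma~\ref{C0C1bounds} into the same decomposition.

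The principal obstacle is extracting precisely the claimed $\sigma^8/\tau^4$ exponent for $N_0$ in the inner region: a naive application of $|\psi|\leq C(1+\sigma^2)/\tau$ to the term $5\psi x/(1+\psi)^4$ would produce only a $C\sigma^2/\tau$ coefficient in front of $|x|$, so one must use both the sharper bound on $|x|$ from Lemma~\ref{LogFactor} and systematic algebraic cancellations in the numerator to recover the stated exponent.
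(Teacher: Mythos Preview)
Your treatment of $\mc I$ matches the paper's exactly: integrate by parts using reflection symmetry to obtain
\[
\mc I=\frac{u_\sigma}{u}+2\,\frac{v_\sigma}{v}+\int_0^\sigma\Big(\frac{u_{\bar\sigma}^2}{u^2}+2\,\frac{v_{\bar\sigma}^2}{v^2}\Big)\,\mr d\bar\sigma,
\]
then insert the bounds from Lemma~\ref{C0C1bounds}.

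For $N_0$, however, you are working much harder than the paper does. The paper does not substitute $\vp=x+\psi$, does not invoke Lemma~\ref{LogFactor}, and does not hunt for cancellations: it simply inserts the pointwise bounds $|\vp|,|\psi|\leq C(1+\sigma^2)/\tau$ and $|\vp_\sigma|,|\psi_\sigma|\leq C(1+|\sigma|)/\tau$ directly into the explicit formula for $N_0$, obtaining in the inner region the intermediate estimate
\[
|N_0|\leq C\left\{\frac{1+\sigma^2}{\tau^2}+\frac{1+\sigma^2}{\tau}+\Big(\frac{1+\sigma^2}{\tau}\Big)^{4}\right\}|x|,
\]
and then asserts that this is $\leq C'\big(1/\tau+\sigma^8/\tau^4\big)|x|$.

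Your ``principal obstacle'' is a genuine observation, but it leads you in the wrong direction. The term $5\psi x/(1+\psi)^4$ you isolate --- equivalently the linear summand $-10\psi$ in your expanded numerator --- really does force a coefficient of order $(1+\sigma^2)/\tau$, and there is no cancellation available to remove it; Lemma~\ref{LogFactor} bounds $|x|$ itself, not the coefficient of $|x|$, so it cannot help either. In fact the paper's displayed final simplification is an overstatement: for $1\ll|\sigma|\ll\tau^{3/8}$ one has $(1+\sigma^2)/\tau\gg 1/\tau+\sigma^8/\tau^4$. (The stated inner bound for $\mc I$ has the same issue: the boundary term is of order $|\sigma|/\tau$, not $1/\tau$.) This inaccuracy is harmless downstream, because the only use of these bounds is in the Gaussian-weighted integrals $\mc J_1,\mc J_2$ of the next lemma, where any estimate that is polynomial in $\sigma$ with coefficients tending to zero as $\tau\to\infty$ (or that is $O(\ve_1^2)$ on $|\sigma|\leq\ve_1\sqrt\tau$) suffices. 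So abandon the search for cancellations and simply record the naive intermediate bound; that is all the argument actually requires.
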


\begin{proof}
Our assumption of reflection symmetry allows us to integrate by parts and
thus write the nonlocal term $\mc I$ defined in ~\eqref{Define-I} as
\begin{align*}
\mc I
&=\frac{u_\sigma}{u}+2\frac{v_\sigma}{v}\;
	+\int_0^\sigma\frac{u_{\bar\sigma}^2}{u^2}\,\mathrm d\bar\sigma
	+2\int_0^\sigma\frac{v_{\bar\sigma}^2}{v^2}\,\mathrm d\bar\sigma.
\end{align*}
We now use $\vp=u-1$ and $\vp_\sigma=u_\sigma$, together with $\psi=v-1$ and $\psi_\sigma=v_\sigma$, 
and proceed to estimate the terms above using Lemma~~\ref{C0C1bounds}. This yields the
stated bounds for $|\mc I|$.

The bounds for  $|N_0|$ also follow easily from Lemma~~\ref{C0C1bounds}.
For $|\sigma|\leq c\sqrt\tau$, one has
\[
|N_0|\leq C\left\{\frac{1+\sigma^2}{\tau^2}+\frac{1+\sigma^2}{\tau}
	+\left(\frac{1+\sigma^2}{\tau}\right)^4\right\}|x|
	\leq C'\left(\frac{1}{\tau}+\frac{\sigma^8}{\tau^4}\right)|x|.
\]
The bound on $|N_0|$ for $c\sqrt\tau\leq|\sigma|\leq e^{\ve\tau}$ is obtained similarly.
\end{proof}

\begin{lemma}
If $\big(\mc S^1\times\mc S^3,G(t)\big)$ is a Ricci flow solution satisfying
Assumption~\ref{Reflection}, then for any $\delta$ sufficiently small, there exist $C$
and $\tau^*$ depending on $\delta$ such that for all times $\tau\geq\tau^*$, one has
\[
\frac{d}{d\tau}\|X\rn^2\leq-(4-\delta)\|X\rn^2+C e^{-2(1+\delta)\tau}.
\]
\end{lemma}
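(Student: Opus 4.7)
The starting point is inequality~\eqref{X-evolve}. The plan is to show that for $\tau \ge \tau^*$ sufficiently large, the three remainder terms $\|\mc I X\rn^2$, $2\lh X, N_0(x)\rh$, and $2\lh X, E - \mc I E_0\rh$ together contribute at most $\delta\|X\rn^2 + Ce^{-2(1+\delta)\tau}$; combined with the linear dissipation $-4\|X\rn^2$, this yields the claim. The key idea is to split $\mathbb R$ at $|\sigma|=c_*\sqrt\tau$, where $c_*=c_*(\delta)$ is a small parameter, chosen in the end to be of order $\delta^{1/4}$, that balances two opposing requirements. The inner region $|\sigma|\le c_*\sqrt\tau$ must be narrow enough that the pointwise bounds on $\mc I$ and $N_0$ supplied by Lemma~\ref{estimate-N} can be absorbed into $\delta\|X\rn^2$, while the outer region $|\sigma|>c_*\sqrt\tau$ must be wide enough that the Gaussian tail $e^{-c_*^2\tau/4}$ pushes the exponential decay below the target rate $e^{-2(1+\delta)\tau}$.

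In the inner region I would exploit the fact that $|\sigma|^k/\tau^{k/2}\le c_*^k$ to rewrite Lemma~\ref{estimate-N} as $|\mc I|^2\le C(1+c_*^6)/\tau$ and $|N_0|\le C(\tau^{-1}+c_*^8)|x|$. The corresponding contributions to $\|\mc I X\rn^2$ and $2|\lh X,N_0\rh|$ are then each bounded by $C(\tau^{-1}+c_*^8)\|X\rn^2$, which is $\le(\delta/2)\|X\rn^2$ provided $c_*^8\le\delta/(8C)$ and $\tau^*\ge 8C/\delta$. For the outer region I would use a pointwise estimate on $|x|$ obtained by feeding the upper bounds on $f$ from Corollary~\ref{NicestYet} and Lemma~\ref{C0C1bounds} into Lemma~\ref{LogFactor} (which gives $g-f\le Cf^3\tau$); this yields $|x|^2\le C\tau^2 e^{-2\tau}\,p(|\sigma|/\sqrt\tau)$ for a fixed polynomial $p$. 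Multiplying by the pointwise bound on $\mc I^2$ (or on $N_0$) from Lemma~\ref{estimate-N} and integrating against $e^{-\sigma^2/4}$, the Gaussian tail contributes a factor $\lesssim e^{-c_*^2\tau/4}$, and the outer contributions are bounded by $C\tau^k e^{-2\tau - c_*^2\tau/4}$. If $c_*^2/4\ge 2\delta$, this is $\le Ce^{-2(1+\delta)\tau}$ for $\tau$ large.

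The error terms $E$ and $\mc I E_0$ are comparatively harmless: Lemma~\ref{E-estimate} gives $\|E\rn\le C\exp(-e^{\ve\tau/2})$, while $E_0=\beta_\sigma x$ is supported in $\{e^{\ve\tau/2}\le|\sigma|\le 2e^{\ve\tau/2}\}$, where the Gaussian weight is $\exp(-e^{\ve\tau}/4)$. Both quantities are super-exponentially small, hence trivially absorbed into $Ce^{-2(1+\delta)\tau}$. The main obstacle — and the technical heart of the argument — is the balancing of $c_*$: one needs simultaneously $c_*^8\ll\delta$ (for inner absorption) and $c_*^2/4\ge 2\delta$ (for outer decay). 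The specific exponents appearing in Lemma~\ref{estimate-N} make this possible: the choice $c_*=\delta^{1/4}$ gives $c_*^8=\delta^2\ll\delta$ and $c_*^2/4=\sqrt\delta/4\ge 2\delta$ once $\delta\le 1/64$. With these choices in hand, assembling the three contributions produces the desired differential inequality for $\tau\ge\tau^*(\delta)$.
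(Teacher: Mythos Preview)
Your proposal is correct and follows essentially the same approach as the paper: split at a threshold $|\sigma|=\epsilon_1\sqrt\tau$, absorb the inner contribution into $\delta\|X\rn^2$ via the pointwise bounds of Lemma~\ref{estimate-N}, and control the outer contribution using Lemma~\ref{LogFactor} to bound $|x|$ together with the Gaussian tail $e^{-\epsilon_1^2\tau/4}$. The paper handles the compatibility of the two constraints on $\epsilon_1$ somewhat implicitly (first fixing $\epsilon_1$ small for the inner estimate, then reading off whatever $\delta_3=(1-\delta_2)\epsilon_1^2/8$ results for the outer one), whereas you make the balancing explicit with the choice $c_*=\delta^{1/4}$; the arguments are otherwise the same.
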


\begin{proof}
We estimate the terms on the \textsc{rhs} of \eqref{X-evolve},
starting with $\lh X,\,E-\mc I E_0\rh$. Given any $\delta_1>0$,
we find by using  weighted Cauchy--Schwarz and Lemma~\ref{E-estimate} that
\begin{align*}
\left|\lh X,E\rh\right|&\leq\frac{\delta_1}{4}\|X\rn^2+\frac{1}{\delta_1}\|E\rn^2\\
&\leq\frac{\delta_1}{4}\|X\rn^2+\frac{C}{\delta_1}\exp(-2e^{\ve\tau/2})\\
&\leq\frac{\delta_1}{4}\|X\rn^2+Ce^{-2\tau}
\end{align*}
for all $\tau\geq\tau_1$, where $\tau_1$ is chosen sufficiently large,
depending only on $\delta_1$ and $\ve$. Then using the facts that
$E_0=\beta_\sigma x$ is supported in $e^{\ve\tau/2} < |\sigma| <  2e^{\ve\tau/2}$,
that $|\beta_\sigma|$ is bounded, and that Lemma~\ref{C0C1bounds}
provides bounds for $|x|$ in that region, we apply Lemma~\ref{estimate-N} and thereby obtain
the estimate 
\[
\|\mc I E_0\rn\leq C\int_{e^{\ve\tau/2}}^\infty\frac{\sigma^2}{\tau}
	\left({\log\frac{|\sigma|}{\sqrt\tau}}\right)e^{-\sigma^2/4}\,\mathrm d\sigma
	\leq C'\exp(-e^{\ve\tau/2}),
\]
exactly as in the proof of Lemma~\ref{E-estimate}. Consequently, arguing as above,
we obtain
\[
\left|\lh X,\mc I E_0\rh\right|\leq\frac{\delta_1}{4}\|X\rn^2+Ce^{-2\tau}.
\]
\smallskip

Next we decompose $\|\mc I X\rn^2+2\lh X,N_0(x)\rh$ into the sum of two quantities
$\mc J_1$ and $\mc J_2$, defined as
\begin{align*}
\mc J_1&:=
\int_{|\sigma|\leq \ve_1\sqrt\tau}\big\{\mc I X^2+2N_0(x)X\big\}\,e^{-\sigma^2/4}\,\mathrm d\sigma,\\
\mc J_2&:=
\int_{\ve_1\sqrt\tau\leq|\sigma|\leq2e^{\ve\tau/2}}\big\{\mc I X^2+2N_0(x)X\big\}\,e^{-\sigma^2/4}\,\mathrm d\sigma,
\end{align*}
for $\ve_1\leq c$ to be chosen. Since, as a consequence of  Lemma~\ref{estimate-N},
we have $|\mc I|\leq C$ and $|N_0(x)|\leq CX$ for $|\sigma|\leq c$, we can enforce the inequality 
$|\mc J_1|\leq\frac{\delta_1}{4}\|X\rn^2$ by choosing $\ve_1$ sufficiently small,
depending only on $\delta_1$ and $C$. Thus in what follows, we focus on $\mc J_2$.

Using Lemma~\ref{LogFactor} along with the weaker growth estimates in Lemma~\ref{C0C1bounds},
we get
\[
|x|=\frac12 e^{\tau/2}(g-f)
	\leq C\frac{|\log(T-t)|}{\sqrt{T-t}}f^3
	= C\tau e^{-\tau}u^3
	\leq C\tau e^{-\tau}\left(1+\frac{\sigma^6}{\tau^3}\right).
\]
So by Lemma~\ref{estimate-N}, we have
\[
|\mc I|X^2\leq Ce^{-2\tau}\left(\tau+\frac{|\sigma|^{15}}{\tau^6}\right)\qquad
\mbox{and}\qquad
|N_0(x)X|\leq Ce^{-2\tau}\left(\tau+\frac{\sigma^{20}}{\tau^8}\right).
\]
We now fix $0<\delta_2\leq1/2$. Then there exist constants $C$ depending on $\delta_2$ such that
\begin{align*}
|\mc J_2|&\leq Ce^{-2\tau}\int_{\ve_1\sqrt\tau\leq|\sigma|\leq2e^{\ve\tau/2}}
	\left(\tau+\frac{\sigma^{20}}{\tau^8}\right)e^{-\delta_2\sigma^2/4}\,
	e^{-(1-\delta_2)\sigma^2/4}\mathrm d\sigma\\
&\leq Ce^{-2\tau}\int_{\ve_1\sqrt\tau\leq|\sigma|\leq2e^{\ve\tau/2}}
	e^{-(1-\delta_2)\sigma^2/4}\mathrm d\sigma\\
&\leq Ce^{-2\tau}\,e^{-\frac{1-\delta_2}{4}\ve_1^2\tau}.
\end{align*}
Thus if we set $\delta_3:=(1-\delta_2)\ve_1^2/8$, we get
$|\mc J_2|\leq Ce^{-2(1+\delta_3)\tau}$, whence the result follows.
\end{proof}

If $\delta\leq2/3$, then the \textsc{ode} for $\|X\rn^2$ implies that for all
$\tau\geq\tau^*$, one has $\|X\rn\leq Ce^{-(1+\delta)\tau}$. The regularizing
effect of the heat equation lets us bootstrap this estimate by one spatial derivative.
The proof is nearly identical to that of Lemma~9 in \cite{AK07} but is even simpler,
because the operator $\mc A-3$ has no unstable eigenmodes. Consequently, we
omit the details, and state our result as follows:

\begin{corollary}
If $\big(\mc S^1\times\mc S^3,G(t)\big)$ is a Ricci flow solution satisfying
Assumption~\ref{Reflection}, then for any $\delta$ sufficiently small, there exist
$C$ and $\tau^*$ depending on $\delta$ such that for all $\tau\geq\tau^*$, one has
\[
\|X\rn +\|X_\sigma\rn \leq Ce^{-(1+\delta)\tau}.
\]
\end{corollary}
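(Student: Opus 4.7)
The preceding Lemma's ODE integrates to $\|X\rn\le Ce^{-(1+\delta)\tau}$ for $\tau\ge\tau^*$, and my plan is to bootstrap by repeating the energy-method argument on an evolution equation for $X_\sigma$. The decisive simplification, absent in \cite{AK07}, is that $\mc A-3$ has no unstable eigenmodes: the energy identity in $\mc G$ already provides sufficient dissipation, with no need to project onto unstable or neutral subspaces as in that paper. Differentiating $X_\tau = (\mc A-3)X + \beta N(x) + E$ in $\sigma$ and using the commutator $[\partial_\sigma,\mc A] = -\tfrac12\partial_\sigma$ (immediate from $\mc A = \partial_\sigma^2 - \tfrac{\sigma}{2}\partial_\sigma + 1$), I would obtain
\[
(X_\sigma)_\tau = \big(\mc A - \tfrac72\big)X_\sigma + \partial_\sigma\big(\beta N(x) + E\big).
\]

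Pairing with $X_\sigma$ in $\mc G$ and invoking the divergence-form identity $\lh W,\mc A W\rh = \|W\rn^2 - \|W_\sigma\rn^2$ used in the preceding Lemma produces the exact energy identity
\[
\tfrac12\frac{d}{d\tau}\|X_\sigma\rn^2 = -\|X_{\sigma\sigma}\rn^2 - \tfrac52\|X_\sigma\rn^2 + \lh X_\sigma,\partial_\sigma(\beta N(x)+E)\rh.
\]
The coefficient $-\tfrac52$ in front of $\|X_\sigma\rn^2$ is what makes the bootstrap work: doubled to $-5$ in the ODE for $\|X_\sigma\rn^2$, it strictly exceeds the target rate $-2(1+\delta)$ once $\delta$ is small, while the auxiliary $-\|X_{\sigma\sigma}\rn^2$ term provides further regularization for absorbing the forcing.

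To estimate the forcing, integration by parts against the Gaussian weight gives $\lh X_\sigma,\partial_\sigma F\rh = -\lh X_{\sigma\sigma},F\rh + \tfrac12\lh\sigma X_\sigma,F\rh$ for $F := \beta N(x)+E$. Weighted Cauchy--Schwarz then bounds this by $\tfrac12\|X_{\sigma\sigma}\rn^2 + \delta_1\|X_\sigma\rn^2 + C\|F\rn^2 + C\|\sigma F\rn^2$ for arbitrarily small $\delta_1$, with the first two quantities absorbed into the favorable $-\|X_{\sigma\sigma}\rn^2$ and $-\tfrac52\|X_\sigma\rn^2$. Control of $\|F\rn^2 + \|\sigma F\rn^2$ would then proceed exactly as in the control of $\lh X,\,\beta N(x)+E-\mc I E_0\rh$ in the preceding Lemma: combining the pointwise bound $|x|\le C\tau e^{-\tau}(1+\sigma^6/\tau^3)$ from Lemma~\ref{LogFactor} with the nonlinear estimates of Lemma~\ref{estimate-N} and the cutoff-error bound of Lemma~\ref{E-estimate}, one obtains $\|F\rn^2 + \|\sigma F\rn^2 \leq Ce^{-2(1+\delta)\tau}$ for $\tau$ large enough. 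Assembling everything yields
\[
\frac{d}{d\tau}\|X_\sigma\rn^2 \leq -(5-\delta')\|X_\sigma\rn^2 + Ce^{-2(1+\delta)\tau},
\]
which integrates to $\|X_\sigma\rn\leq Ce^{-(1+\delta)\tau}$ by a standard integrating-factor argument, provided $\delta$ and $\delta'$ are sufficiently small.

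The chief obstacle I anticipate is the appearance of the top-order quantity $\|X_{\sigma\sigma}\rn^2$ when one integrates by parts against $\partial_\sigma F$. In other settings this would force a delicate balancing argument, but here the coefficient $-1$ on $\|X_{\sigma\sigma}\rn^2$ coming from the linear energy identity comfortably absorbs the Cauchy--Schwarz loss, so no such balancing is required. A secondary technical concern is that $\partial_\sigma N(x)$ contains second spatial derivatives of $u$ and $v$; but these are already pointwise controlled by Lemmas~\ref{lem-cyl-est} and \ref{lem-sharper-k01}, precisely as exploited in the derivation of the $C^1$ bounds of Lemma~\ref{C0C1bounds}, so one obtains the same $e^{-2(1+\delta)\tau}$ decay rate for $\|\partial_\sigma F\rn$ that drives the forcing estimate above.
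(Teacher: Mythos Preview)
There is a genuine gap in your claimed bound $\|F\rn^2+\|\sigma F\rn^2\leq Ce^{-2(1+\delta)\tau}$. Recall that $F=\beta N(x)+E$ contains the nonlocal piece $\beta\mc I x_\sigma$. In the preceding Lemma this term was \emph{never} bounded in norm: one wrote $\beta\mc I x_\sigma=\mc I X_\sigma-\mc I E_0$ and used $2\lh X,\mc I X_\sigma\rh\leq\|\mc I X\rn^2+\|X_\sigma\rn^2$, with $\|X_\sigma\rn^2$ absorbed by the favorable $-2\|X_\sigma\rn^2$ in the energy for $X$. That cancellation structure disappears once you ask for $\|F\rn^2$ itself. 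On the inner region $|\sigma|\leq\ve_1\sqrt\tau$ you can still write $\|\mc I X_\sigma\rn^2\leq(\text{small})\|X_\sigma\rn^2$ and absorb it into your $-\tfrac52\|X_\sigma\rn^2$. But on the outer region $\ve_1\sqrt\tau\leq|\sigma|\leq2e^{\ve\tau/2}$ the only pointwise control available is $|x_\sigma|=\tfrac12|(f-g)_s|\leq CM\sim Ce^{-\tau/2}u$ from Lemma~\ref{lem-sharper-der}; there is no derivative analogue of Lemma~\ref{LogFactor} providing $e^{-\tau}$-type decay for $x_\sigma$. Consequently the outer contribution is only
\[
\int_{|\sigma|\geq\ve_1\sqrt\tau}|\mc I x_\sigma|^2 e^{-\sigma^2/4}\,\mathrm d\sigma\;\leq\;Ce^{-(1+\delta')\tau}
\]
for some small $\delta'>0$. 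Feeding this into your ODE yields $\|X_\sigma\rn^2\leq Ce^{-(1+\delta')\tau}$, hence $\|X_\sigma\rn\leq Ce^{-(1+\delta')\tau/2}$, which falls short of the target $e^{-(1+\delta)\tau}$ by a factor of roughly $e^{\tau/2}$. Your appeal to ``exactly as in the preceding Lemma'' conflates bounding the inner product $\lh X,\beta N(x)+E\rh$ with bounding the norm $\|\beta N(x)+E\rn$; these are genuinely different tasks because of $\mc I x_\sigma$.

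The route the paper indicates (Lemma~9 of \cite{AK07}) sidesteps this by using the Duhamel formula and the explicit smoothing estimate for the semigroup $e^{\tau(\mc A-3)}$, treating $\mc I\partial_\sigma$ as a first-order perturbation of the linear operator rather than as a forcing term whose $\mc G$-norm must be controlled a priori. In that framework the already-established bound $\|X\rn\leq Ce^{-(1+\delta)\tau}$ is bootstrapped directly to $\|X_\sigma\rn$ via parabolic regularization, and the absence of unstable eigenmodes for $\mc A-3$ means no spectral projection is needed---which is precisely the simplification the paper notes.
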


Using  Sobolev embedding, we find  that this result implies that on bounded
$|\sigma|$ intervals, one has a pointwise estimate
$|x|=|X|\leq Ce^{\sigma^2/8}\big(\|X\rn+\|X_\sigma\rn\big)$.
We therefore reach the following conclusion.

\begin{corollary}
\label{Corol}
If $\big(\mc S^1\times\mc S^3,G(t)\big)$ is a Ricci flow solution satisfying
Assumption~\ref{Reflection}, then for any $\delta$ sufficiently small and
$\Sigma$ large, there exist $C$ and $\tau^*$ depending on $\delta$ and $\Sigma$
such that for all $|\sigma|\leq\Sigma$ and $\tau\geq\tau^*$, one has
\[
\frac{g-f}{\sqrt{T-t}}=2|x|\leq C e^{-(1+\delta)\tau}=C(T-t)^{1+\delta}.
\]
\end{corollary}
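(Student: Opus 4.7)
The plan is to bootstrap the weighted $L^2$ bounds of the preceding Corollary into pointwise bounds on any bounded $\sigma$-interval via a one-dimensional Sobolev embedding, and then unwrap the definitions of $X$ and $\tau$. Fix $\delta$ and $\Sigma$, and choose $\tau^*$ so large that $2\Sigma\leq e^{\ve\tau^*/2}$; then by construction of the cutoff $\beta$, we have $X(\sigma,\tau)=x(\sigma,\tau)$ for all $|\sigma|\leq\Sigma$ and all $\tau\geq\tau^*$, so it suffices to estimate $X$ pointwise on $[-\Sigma,\Sigma]$.

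The key observation is that the Hilbert space $\mc G=L^2(\mathbb R;e^{-\sigma^2/4}\,\mathrm d\sigma)$ is isometric via $W\mapsto e^{-\sigma^2/8}W$ to the unweighted space $L^2(\mathbb R;\mathrm d\sigma)$. Converting the bounds from the preceding Corollary through this isometry and applying the standard Sobolev embedding $H^1(\mathbb R)\hookrightarrow L^\infty(\mathbb R)$ yields the pointwise estimate
\[
|X(\sigma,\tau)|\leq Ce^{\sigma^2/8}\bigl(\|X(\cdot,\tau)\rn+\|X_\sigma(\cdot,\tau)\rn\bigr),
\]
as already noted in the paragraph preceding the statement. (One has to be slightly careful, because differentiating the weighted factor produces an extra term $\tfrac{\sigma}{4}W e^{-\sigma^2/8}$; on the bounded interval $|\sigma|\leq\Sigma$ this contributes at most a $\Sigma$-dependent multiplicative constant, which is harmless since the constant $C$ in the conclusion is allowed to depend on $\Sigma$.)

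Combining this embedding with the bound $\|X\rn+\|X_\sigma\rn\leq Ce^{-(1+\delta)\tau}$ from the preceding Corollary, and absorbing the factor $e^{\Sigma^2/8}$ into the constant $C=C(\delta,\Sigma)$, gives
\[
|x(\sigma,\tau)|=|X(\sigma,\tau)|\leq Ce^{-(1+\delta)\tau}
\]
uniformly for $|\sigma|\leq\Sigma$ and $\tau\geq\tau^*$. Recalling the definition $x=\tfrac12 e^{\tau/2}(f-g)$ (which, since $f\leq g$, equals $-\tfrac12 e^{\tau/2}(g-f)$), together with $\tau=-\log(T-t)$ so that $e^{-\tau}=T-t$, this translates directly into
\[
\frac{g-f}{\sqrt{T-t}}=2|x|\leq Ce^{-(1+\delta)\tau}=C(T-t)^{1+\delta},
\]
as required.

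There is no real obstacle in this step beyond keeping track of the constants; all the hard analytic work (the differential inequality for $\|X\rn^2$ and its bootstrap to $\|X_\sigma\rn$) has already been carried out. The mild technical point is the handling of the Gaussian weight under differentiation, which is why one needs to allow the constant $C$ to depend on $\Sigma$, and the verification that the cutoff is trivial on $[-\Sigma,\Sigma]$ once $\tau$ is large, so that $X$ genuinely agrees with $x$ on the region of interest.
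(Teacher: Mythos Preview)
Your proof is correct and follows exactly the approach in the paper: the paper's argument is the single sentence preceding the corollary, namely that Sobolev embedding gives the pointwise bound $|x|=|X|\leq Ce^{\sigma^2/8}\big(\|X\rn+\|X_\sigma\rn\big)$ on bounded $|\sigma|$ intervals, and then one unwraps definitions. You have simply expanded the details (the cutoff triviality for large $\tau$, the isometry to unweighted $L^2$, and the handling of the extra $\tfrac{\sigma}{4}W$ term via the $\Sigma$-dependence of the constant), which is entirely in line with the paper's intent.
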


Thus as promised above, we obtain an improved rate of decay for the scale-invariant quantity
$|f-g|/\sqrt{T-t}$. This decay indicates that the solution is rapidly approaching roundness in a
spatial neighborhood of the center of the neck.

\appendix

\section{Warped Berger metrics}	\label{Ansatz}

\subsection{The metrics we study}

To begin, we identify $\mc S^3$ with the Lie group $\mr{SU}(2)$, and consider
general left-invariant metrics of the form
\[
 \up G = f^2\omega^1\ten\omega^1
 	   + g^2\omega^2\ten\omega^2
	   + h^2\omega^3\ten\omega^3,
\]
where the coframe $(\omega^1,\omega^2,\omega^3)$ is algebraically dual to a fixed Milnor frame
$(F_1,F_2,F_3)$ (see \cite[Chapter~1]{CK04}). The sectional curvatures of $\up G$ are then
\begin{align*}
 \up\kappa_{12} &= \frac{(f^2-g^2)^2}{(fgh)^2}-3\frac{h^2}{(fg)^2}+\frac{2}{f^2}+\frac{2}{g^2},\\
 \up\kappa_{23} &= \frac{(g^2-h^2)^2}{(fgh)^2}-3\frac{f^2}{(gh)^2}+\frac{2}{g^2}+\frac{2}{h^2},\\
 \up\kappa_{31} &= \frac{(f^2-h^2)^2}{(fgh)^2}-3\frac{g^2}{(fh)^2}+\frac{2}{f^2}+\frac{2}{h^2}.
\end{align*}

Notice that setting $f^2=\ve$ and $g^2=h^2=1$ recovers the classic Berger collapsed sphere,
shrinking the fibers of the Hopf vibration $\mc S^1\hookrightarrow\mc S^3\twoheadrightarrow\mc S^2$
with sectional curvatures $\kappa_{12}=\kappa_{31}=\ve$ and $\kappa_{23}=4-3\ve$. 

Now we consider Riemannian manifolds $(\mc M^4,G)$ having metrics of the form
\begin{align*}
 G &= \rho^2(\dx)^2+\up G(\xi)\\
     &= (\ds)^2+f(s)^2\omega^1\ten\omega^1
 	   + g(s)^2\omega^2\ten\omega^2
	   + h(s)^2\omega^3\ten\omega^3,
\end{align*}
where $\ds:=\rho\,\dx$. We assume for now that $f,g,h$ depend only on $s(\xi)$, where
$-\infty\leq\xi_-<\xi<\xi_+\leq\infty$. We suppress dependence on $\xi$ when
possible, regarding $f,g,h$ as functions of $s\in\mc B:=(s_-,s_+)$, where $s_-=s(\xi_-)$
and $s_+=s(\xi_+)$. We leave the boundary conditions at $s_\pm$, hence the topology
of $\mc M^4$, open for now. We call these geometries \emph{warped Berger metrics.}
They are generalized warped products; for example, setting $f=g=h$ recovers the warped
products studied in \cite{AK04}.

Note that $\pi:(\mc M^4,G)\rightarrow(\mc B,\dn G)$ is a Riemannian submersion, where
$\dn G=(\ds)^2$. As in \cite{ON66}, for each $x\in\mc M^4$
with $s=\pi(x)$, we define fibers $\mc F_x=\pi^{-1}(s)=(\mc S^3,\up G(s))$. We identify
$\mc V_x=\ker\pi_*:T_x\mc M^4\rightarrow T_s\mc B$ with $T_x\mc F_x\subset T_x\mc M^4$.
We set $\mc H_x=\mc V_x^\perp\subset T_x\mc M^4$. One calls $\mc V$ and $\mc H$ the
vertical and horizontal distributions, respectively.

We define $F_0:=\frac{\partial}{\partial s}$, and let $F_1,F_2,F_3$ be the Milnor frame introduced
above. Hereafter, we let Greek indices range in $0,\dots,3$ and Roman indices in $1,\dots,3$. We
denote the components of the curvature tensor $\Rm$ of $G$ by $R_{\alpha\beta\lambda\mu}$,
and those of the curvature tensor $\up{\Rm}$ of $\up G$ by $\up R_{ijk\ell}$. Then
$\up R_{1221}=(fg)^2\up\kappa_{12}$, $\up R_{2332}=(gh)^2\up\kappa_{23}$, and
$\up R_{3113}=(fh)^2\up\kappa_{31}$.

\subsection{Curvatures of the total space}	\label{ComputeCurvatures}
Recall that O'Neill \cite{ON66} introduces $(2,1)$ tensor fields $A$ and $T$ that act on vector
fields $M,N$ by
\begin{equation}
 \label{define-A}
 A_M N = \mc H\big\{\cv_{(\mc H M)}(\mc V N)\big\}
 	+\mc V\big\{\cv_{(\mc H M)}(\mc H N)\big\},
\end{equation}
and
\begin{equation}
 \label{define-T}
 T_M N = \mc H\big\{\cv_{(\mc V M)}(\mc V N)\big\}
 	+\mc V\big\{\cv_{(\mc VM)}(\mc H N)\big\},
\end{equation}
respectively. Using these, one computes the tensor $\Rm$ from $\up{\Rm}$, as follows.

We denote the connection $1$-forms by $\Upsilon$, so that
$\cv_{F_\alpha}F_\beta=\Upsilon_{\alpha\beta}^\gamma F_\gamma$. Note that these
are not Christoffel symbols with respect to a chart; in particular, it is not true in general
that $\Upsilon_{\alpha\beta}^\gamma=\Upsilon_{\beta\alpha}^\gamma$. However, we do
have $\Upsilon_{0i}^\gamma=\Upsilon_{i0}^\gamma$, because $[F_0,F_i]=0$, a fact
that we use below.

The only forms $\Upsilon$ for $G$ that differ from those $\up\Upsilon$ for $\up G$ are
\begin{equation}
 \label{connection}
 \Upsilon_{0i}^i = \frac12 G^{ii}\Ds(G_{ii})\qquad\text{and}\qquad
 \Upsilon_{ii}^0 = -\frac12\Ds(G_{ii}).
\end{equation}
One obtains these from the calculations
\[
 \Ds G_{ii}=2\lp\cv_{F_0}F_i,F_i\rp=2\Upsilon_{0i}^i G_{ii}
\]
and
\[
 \Upsilon_{ii}^0=\lp\cv_{F_i}F_i,F_0\rp=-\lp F_i,\cv_{F_i}F_0\rp
 =-\lp F_i,\cv_{F_0}F_i\rp=-\Upsilon_{0i}^i G_{ii}.
\]

Hereafter, we only consider vector fields $N$ that satisfy our warped Berger
\emph{Ansatz} $N=N^\alpha(s)F_\alpha$, so that all $F_i(N^0)=0$. It follows
that for any such vector fields $M,N$, one has
\begin{equation}
 \label{covariant}
 \cv_M N -\up\cv_{(\mc V M)}(\mc V N)
 =M^0\Big\{\Ds N^\beta F_\beta+\Upsilon_{0i}^i N^i F_i\Big\}
 +M^i\Big\{\Upsilon_{ii}^0 N^i F_0 + \Upsilon_{i0}^i N^0 F_i\Big\}.
\end{equation}

\subsubsection{Curvatures of vertical planes}
O'Neill's tensor $T$ encodes the second fundamental form of the fibers $\mc F_x$ ---
that is to say, it encodes $\cv-\up\cv$.
We write $T_M N=M^\alpha N^\beta T_{\alpha\beta}^\gamma F_\gamma$.
Formulas~\eqref{define-T}--\eqref{covariant}
show that for vertical vector fields $U,V$, one has
$\cv_U V-\up\cv_U V = U^i V^j T_{ij}^\alpha F_\alpha$, where
all components $T_{ij}^\alpha$ vanish except $T_{ii}^0=\Upsilon_{ii}^0$, which have
the values
\[
 T_{11}^0=-ff_s,\qquad T_{22}^0=-gg_s,\quad\text{and}\quad T_{33}^0=-hh_s.
\]
O'Neill's formula for $\Rm$ applied to vertical vector fields $U,V,W,P$,
\[
 \lp R(U,V)W,P\rp = \lp\up R(U,V)W,P\rp +\lp T_U W, T_V P\rp-\lp T_V W,T_U P\rp,
\]
thus implies that the sectional curvatures of the vertical planes are
\begin{align*}
 \kappa_{12}&=\up\kappa_{12}-\frac{f_s g_s}{fg},\\
 \kappa_{23}&=\up\kappa_{23}-\frac{g_s h_s}{gh},\\
 \kappa_{31}&=\up\kappa_{31}-\frac{f_s h_s}{fh}.\\
\end{align*}

\subsubsection{Curvature of mixed planes}
Because $\mc B$ is one-dimensional, the only other curvatures we need consider are
those involving planes $F_0\wedge F_i$. For the same reason, O'Neill's tensor $A$,
which measures the  obstruction to integrability of the distribution $\mc H$, vanishes.
These observations reduce the remaining curvature formulas to
\begin{align}
 \label{mixed}
 \lp R(F_0,U)V,F_0\rp&=\lp(\cv_{F_0}T)_U V,F_0\rp-\lp T_U F_0, T_V F_0\rp,\\
 \label{verymixed}
 \lp R(U,V)W,F_0\rp&=\lp(\cv_U T)_V W,F_0\rp-\lp(\cv_V T)_U W,F_0\rp,
\end{align}
where $U,V,W$ are again vertical vector fields.

To compute the curvatures given by ~\eqref{mixed}, we first use~\eqref{define-T} and
\eqref{covariant} to see that
\[
 T_U F_0 = \mc V\Big(\cv_U F_0\Big) = U^i \Upsilon_{i0}^i F_i.
\]
Next we observe that $\cv_0 T_{ij}^k$=0 for all $i,j,k$, and that $\cv_0 T_{ij}^0=0$ for $i\neq j$,
while
\[
 \cv_0 T_{ii}^0=\Ds T_{ii}^0-2\Upsilon_{0i}^i T_{ii}^0.
\]
It thus follows from~\eqref{mixed} that $R_{0ij0}=0$ for all $i\neq j$, while the nonvanishing
sectional curvatures $G^{ii} R_{0ii0}$ are
\[
 \kappa_{01}=-\frac{f_{ss}}{f},\qquad
 \kappa_{02}=-\frac{g_{ss}}{g},\quad\text{and}\quad
 \kappa_{03}=-\frac{h_{ss}}{h}.
\]

Rather than use~\eqref{verymixed} to compute the remaining curvatures, it is easier to
proceed as follows. To study this \emph{Ansatz} under Ricci flow, it suffices to compute
 $\Rc$, and the only remaining curvatures one needs to accomplish this are all elements
 of the form $R_{0jjk}$ with $j\neq k$. By definition of $\Rm$, one has
\begin{align*}
 R(F_\alpha,F_\beta)F_\gamma
 &=\cv_{F_\alpha}(\cv_{F_\beta}F_\gamma)-\cv_{F_\beta}(\cv_{F_\alpha}F_\gamma)
 	-\cv_{[F_\alpha,F_\beta] }F_\gamma\\
 & =F_\alpha\big(\Upsilon_{\beta\gamma}^\lambda\big)F_\lambda
 	+\Upsilon_{\beta\gamma}^\mu\Upsilon_{\alpha\mu}^\lambda F_\lambda
         -F_\beta(\Upsilon_{\alpha\gamma}^\lambda)F_\lambda
         -\Upsilon_{\alpha\gamma}^\mu\Upsilon_{\beta\mu}^\lambda F_\lambda
         -\cv_{[F_\alpha,F_\beta]}F_\gamma.
\end{align*}
Therefore, because $[F_0,F_j]=0$ and $j\neq k$, we obtain
\[
 R_{0jj}^k=\Upsilon_{jj}^\mu\Upsilon_{0\mu}^k-\Upsilon_{0j}^\mu\Upsilon_{j\mu}^k
 =\up\Upsilon_{jj}^k\big(\Upsilon_{0k}^k-\Upsilon_{0j}^j\big).
\]
But $\up\Gamma_{jj}^k=\up G^{kk}\lp\up\cv_{F_j}F_j,F_k\rp$, and
$\up\cv_{F_j}F_j=-(\mr{ad}F_j)^* F_j=0$, because $(F_1,F_2,F_3)$ is a Milnor frame
(see \cite[Chapter~1.4]{CK04}). Therefore, all $R_{0jjk}=0$, and so $R_{0k}=0$.

\subsection{Evolution of warped Berger metrics by Ricci flow}
The calculations in Section~\ref{ComputeCurvatures} show that the Ricci
endormorphism is diagonal in the coordinates induced by $(F_0,\dots,F_3)$,
with $R_\alpha^\gamma=0$ if $\gamma\neq\alpha$, and
$R_\alpha^\alpha=\sum_{\beta\neq\alpha}\kappa_{\alpha\beta}$.
Hence the warped Berger metric \emph{Ansatz} is preserved under Ricci flow.

We now abuse notation and allow $f,g,h$ and the gauge $\rho$ (hence $s$) to depend on time as well
as the spatial variable $\xi$. Then Ricci flow of $G$ is equivalent to the system
\begin{subequations}		\label{RFsystem}
\begin{align}
 f_t&=f_{ss}+\left(\frac{g_s}{g}+\frac{h_s}{h}\right)f_s-f\big(\up\kappa_{12}+\up\kappa_{31}\big),\\
 g_t&=g_{ss}+\left(\frac{f_s}{f}+\frac{h_s}{h}\right)g_s-g\big(\up\kappa_{12}+\up\kappa_{23}\big),\\
 h_t&=h_{ss}+\left(\frac{f_s}{f}+\frac{g_s}{g}\right)h_s-h\big(\up\kappa_{23}+\up\kappa_{31}\big),
\end{align}
\end{subequations}
along with the evolution equation
\begin{equation}	\label{DistanceEvolution}
 (\log\rho)_t=-\big(\kappa_{01}+\kappa_{02}+\kappa_{03}\big)
\end{equation}
satisfied by the gauge $\rho$. Our choice of gauge means that space and time derivatives do
not commute; instead one has the commutator
\[
    \left[\frac{\partial}{\partial t},\frac{\partial}{\partial s}\right]
    =\big(\kappa_{01}+\kappa_{02}+\kappa_{03}\big)\frac{\partial}{\partial s}.
\]

Finally, one has to impose boundary conditions at $\xi_\pm$ in order to get a smooth metric
on some topology. In this paper, we study metrics on $\mc S^1\times\mc S^3$, so we take
$[\xi_-,\xi_+]=[-\pi,\pi]$ and stipulate that everything in sight is $2\pi$-periodic in space. This
allows us in the body of the paper to regard $\xi$ as a coordinate on $\mc S^1$, with $s$
representing arclength from a fixed but arbitrary point $\xi_0$.

\subsection{A simplified \emph{Ansatz}}

We conjecture that Ricci flow solutions satisfying the general
system~\eqref{RFsystem} become asymptotically rotationally symmetric
if they develop neckpinch singularities. In this paper, we prove the conjecture in the special
case that $g=h$ initially. The following result shows that this condition is preserved.

\begin{lemma} \label{Berger}
For these metrics, any ordering, e.g., $f\leq g$ or $g\leq h$, that holds initially is preserved by Ricci flow.
\end{lemma}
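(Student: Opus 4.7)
The plan is to apply the parabolic scalar maximum principle to the difference $w := g - f$. Without loss of generality, I treat the case $f \leq g$; the case $g \leq h$ (and any other ordering) is handled identically after permuting roles, using the evident $S_3$ symmetry of the system~\eqref{RFsystem} that interchanges the three components of the Milnor frame and correspondingly permutes $(\hat\kappa_{12}, \hat\kappa_{23}, \hat\kappa_{31})$.

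The first step is to derive the evolution equation for $w$. Subtracting the equation for $f_t$ from that for $g_t$ and adding and subtracting $(h_s/h)f_s$ from the drift terms, while using the identity $f_s/f - g_s/g = -(g_s - f_s)/g + f_s(g-f)/(fg)$, yields
\[
w_t = w_{ss} + \frac{h_s}{h}\, w_s + \frac{f_s g_s}{fg}\, w - \hat\kappa_{12}\, w + \bigl(f\hat\kappa_{31} - g\hat\kappa_{23}\bigr).
\]
The only term on the right that is not manifestly proportional to $w$ is the final inhomogeneous term $f\hat\kappa_{31} - g\hat\kappa_{23}$.

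The key observation is that this term also vanishes linearly in $w$. Indeed, inspection of the explicit formulas for $\hat\kappa_{23}$ and $\hat\kappa_{31}$ in Appendix~\ref{Ansatz} shows that the swap $f \leftrightarrow g$ interchanges these two curvatures (this swap is the $S_3$ element exchanging the $F_2$ and $F_3$ directions of the Milnor frame, while leaving $F_1$'s role symmetric in $f$ and $g$). Treating $h$ as a parameter, the smooth function $(f,g) \mapsto f\hat\kappa_{31} - g\hat\kappa_{23}$ therefore vanishes identically on the diagonal $\{f=g\}$, and by the smooth division lemma can be written as $w \cdot \Phi(f,g,h)$ for a smooth $\Phi$ that is bounded on any region where $f,g,h$ are bounded away from $0$ and $\infty$. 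Consequently, $w$ satisfies a linear parabolic equation
\[
w_t = w_{ss} + \frac{h_s}{h}\, w_s + c(s,t)\, w,
\qquad c := \frac{f_s g_s}{fg} - \hat\kappa_{12} + \Phi(f,g,h),
\]
whose coefficients are bounded on any subinterval $[0,T_1]$ strictly contained in the maximal existence interval.

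Since $\mc S^1\times\mc S^3$ is compact and $w \geq 0$ initially, the scalar parabolic maximum principle now implies $w \geq 0$ for as long as the flow exists smoothly (the usual perturbation $w \mapsto w + \varepsilon e^{Kt}$ handles equality cases). The main obstacle is simply the bookkeeping in Step~1, together with the verification in Step~2 that $f\hat\kappa_{31} - g\hat\kappa_{23}$ vanishes on $\{f=g\}$; but once the correct symmetry of the curvature formulas is noticed, this vanishing is immediate and no further computation of $\Phi$ is required.
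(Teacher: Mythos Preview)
Your proof is correct and follows essentially the same strategy as the paper's: derive the evolution equation for the difference (the paper uses $z=g-h$, you use $w=g-f$), verify that it is linear homogeneous in that difference with bounded coefficients, and apply the parabolic maximum principle. The only distinction is that the paper writes the zero-order coefficient out explicitly as a rational function $\zeta$ of $f,g,h$, whereas you invoke the $f\leftrightarrow g$ symmetry of the fiber curvature formulas to see that $f\hat\kappa_{31}-g\hat\kappa_{23}$ vanishes on $\{f=g\}$ and then appeal to smooth division, avoiding any explicit computation of the quotient $\Phi$; this is a pleasant shortcut. One cosmetic slip: the swap $f\leftrightarrow g$ corresponds to exchanging the frame directions $F_1$ and $F_2$ (not $F_2$ and $F_3$ as you write), but since your verification is at the level of the formulas this does not affect the argument.
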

\begin{proof}
Without loss of generality, it suffices to show that the condition $g\leq h$ is preserved.
We set $z:=g-h$. Then a straightforward computation shows that
\[
 z_t = z_{s}+\frac{f_s}{f}z_s
 +\left\{\frac{g_s h_s}{gh}-\up\kappa_{23}+\zeta\right\}z,
\]
where
\begin{multline*}
 \zeta=2\frac{f^2-gh}{f^2gh}-3\frac{g^2+gh+h^2}{f^2gh}\\
  -\frac{f^4-2f^2(g^2+gh+h^2)+g^4+g^3h+g^2h^2+gh^3+h^4}{(fgh)^2}.
\end{multline*}
By the parabolic maximum principle, the condition $z\leq0$ is preserved if it
holds initially; the same is true of $z\geq0$.
\end{proof}

\begin{remark}
Unsurprisingly,  ordering is also preserved by the Ricci flow \textsc{ode} system on
$\mr{SU}(2)$; see \cite[Chapter~1.5]{CK04}.
\end{remark}

\section{Initial data that result in local singularities}		\label{InitialData}

Here we show that there are (non-unique) open sets of warped Berger initial
data giving rise to solutions that satisfy Assumption~\ref{MildAssumption} or
Assumption~\ref{StrongerAssumption} and that develop local neckpinch singularities.

To begin, we consider metrics of the form~\eqref{SimpleAnsatz} on
$\mb R\times\mc S^3$, with $g(s)=\gamma(s)$ and $f=\eta\gamma(s)$, where
\[
\gamma(s):=\sqrt{\alpha+\beta s^2}.
\]
Here, $\alpha$, $\beta$, and $\eta\leq1$ are positive constants. It is easy to
check that $\gamma_s=\beta s/\gamma$ satisfies the bound $|\gamma_s|\leq\sqrt \beta$,
and that $\gamma_{ss}=\alpha\beta/\gamma^3$. Then, observing that the curvatures of
these metrics are 
\[
\kappa_{12}=\kappa_{31}=\frac{\eta^2\gamma^2-\beta^2s^2}{\gamma^4}\qquad\mbox{and}
\qquad\kappa_{23}=\frac{(4-3\eta^2)\gamma^2-\beta^2s^2}{\gamma^4},
\]
and 
\[
\kappa_{01}=\kappa_{02}=\kappa_{03}=-\frac{\alpha\beta}{\gamma^4},
\]
one computes easily that the scalar curvature is
\[
R=\frac{(4-\eta^2-3\beta)}{\gamma^2},
\]
which is positive if $\beta$ is sufficiently small.

Next, working on $\mc S^1\times\mc S^3$, we set $\rho=\Lambda$, where $\Lambda$
is a large constant. It follows that  $s\in[-\Lambda\pi,\Lambda\pi]$. We now choose 
$g(s)=\bar\gamma(s)$ and $f=\eta\bar\gamma(s)$, where $\bar\gamma$ is the piecewise
smooth function
\[
\bar\gamma(s):=\left\{
\begin{matrix}
\gamma(s) & \mbox{if }|s|\leq\Lambda,\\ \\
\gamma(\Lambda) & \mbox{if }|s|>\Lambda.
\end{matrix}
\right.
\]
For $|s|>\Lambda$, one has $R=(4-\eta^2)/\gamma(\Lambda)^2$, which is positive.
(As noted above, $R$ is also positive  for $|s|\leq\Lambda$.)

We smooth the ``corner'' that $\bar\gamma$ has at $s=\Lambda$ in two steps. First we construct
$\tilde\gamma$, which agrees with $\bar\gamma$ outside intervals
$I_\delta:=\{|s|\in(\Lambda-\delta,\Lambda+\delta)\}$ and has $\tilde\gamma_{ss}$ constant
in each $I_\delta$. We choose the constant
$-\beta(\Lambda-\delta)/\big(2\delta\sqrt{\alpha+\beta(\Lambda-\delta)^2}\big)$,
so that $\tilde\gamma$ is $C^1$. Because
$\tilde\gamma_{ss}<0$ in each $I_\delta$ and $|\tilde\gamma_s|\leq\sqrt\beta$ everywhere, the
metric induced by $\tilde\gamma$ continues to have positive scalar curvature everywhere it is smooth.

Now $\tilde\gamma$ is piecewise smooth, and $\tilde\gamma_{ss}$ has simple jump
discontinuities at $|s|=\Lambda\pm\delta$. So in the final step, we smooth $\tilde\gamma$,
obtaining a $C^\infty$ function $\tilde{\tilde\gamma}$ that agrees with $\bar\gamma$ outside
intervals $I_{2\delta}$. It is clear that this can be done so that
$|\tilde{\tilde\gamma}_s|\leq2\sqrt\beta$. For $\alpha\in(0,\alpha^*)$, $\beta\in(0,\beta^*)$,
and $\eta\in(0,1)$, this produces a family $\mc G$ of initial data with $f<g$, positive scalar
curvature, and uniform curvature bounds, depending only on $\alpha^*$, $\beta^*$, and $\delta$.
Hence the first two conditions of Assumption~\ref{MildAssumption} are satisfied.

Each initial metric $G_0\in\mc G$ has a ``pseudo-neck'' at $s=0$ of radius $\eta\alpha$ and a ``pseudo-bump''
at $|s|=\Lambda\pi$ of height $\gamma(\Lambda)$. A solution originating from $G_0$ must 
become singular at some $T<\infty$ and thus must satisfy the third condition of
Assumption~\ref{MildAssumption}; indeed, it follows from Lemma~\ref{lem-lower} that the singular
time $T$ satisfies
\[
T\leq\frac{\dn M^2(0)}{c}=\frac{\eta^2\alpha}{c}.
\]

Finally, we note that it follows from Lemma~\ref{UniformlyLipschitz} and Corollary~\ref{NotTooFast}
that for $s\geq\Lambda+2\delta$, one has
\[
f^2(s,T)\geq\eta^2\left\{\alpha-\frac{C}{c}\alpha+\beta\Lambda^2\right\}.
\]
We observe that the constants $c$ and $C$ from Lemma~\ref{UniformlyLipschitz}
and Corollary~\ref{NotTooFast}, respectively,  depend only on the ratio $f/g=\eta$ and on bounds
for the curvatures, all of which are independent of $\Lambda\ge1$. So by taking $\Lambda$
sufficiently large, we can ensure that $f^2(s,T)>0$, hence that the singularity is local.

\begin{remark}
It is clear from this construction that there is a neighborhood $\mc G_1$ of $\mc G$ in
$\mathfrak{Met}(\mc S^1\times\mc S^3)$ such that all warped Berger solutions originating in
this open set satisfy Assumption~\ref{MildAssumption} and develop local singularities in finite time.
\end{remark}

\begin{remark}	\label{WeCanDoIt}
It is also clear from the construction that by taking $\beta$ sufficiently close to $0$ and
$\eta$ sufficiently close to $1$, we obtain a family $\mc G'$ of initial data that satisfy
Assumption~\ref{StrongerAssumption}, as do all warped Berger solutions originating
in a neighborhood $\mc G_2$ of $\mc G'$ in $\mathfrak{Met}(\mc S^1\times\mc S^3)$.
Because our construction is reflection-symmetric, it also produces initial data that satisfy
Assumption~\ref{Reflection}.
\end{remark}

\section{Parabolically rescaled equations}	\label{FormalNeckpinch}
\subsection{Evolution equations in blow-up variables}

Given a singularity time $T$, we introduce parabolically dilated time and space variables
\[
\tau:=-\log(T-t)\qquad\mbox{and}\qquad\sigma:=e^{\tau/2}s,
\]
respectively. We parabolically dilate the metric, considering\,\footnote{The
fraction $\frac12$ corresponds to the factor $1/\sqrt{2(n-1)}$ in \cite{AK07} and
simplifies what follows.}
\[
u:=\frac12 e^{\tau/2}f\qquad\mbox{and}\qquad v:=\frac12e^{\tau/2}g.
\]

One computes that
$f_t=2e^{\tau/2}\big\{u_\tau+\sigma_\tau u_\sigma-\frac12 u\big\}$,
$f_s=2u_\sigma$, and $f_{ss}=2e^{\tau/2}u_{\sigma\sigma}$, where
$\sigma_\tau=\frac12\sigma+\mc I$, with $\mc I$ the nonlocal term
\[
\mc I(\sigma,\tau) = \int_0^\sigma \left(\frac{u_{\bar\sigma \bar\sigma}}{u}
	+2\frac{v_{\bar\sigma \bar\sigma}}{v}\right)\mr d\bar\sigma.
\]
The nonlocal quantity $\mc I$ is necessary for $\sigma$ and $\tau$ to be
commuting variables, i.e.~for us to interpret $\tau$ derivatives as time derivatives
taken with $\sigma$ rather than $\xi$ fixed. Of course, analogous formulas
hold for $g_t$, $g_s$, and $g_{ss}$.

With  these rescalings imposed , system~\eqref{SRFsystem} becomes
\begin{align*}
u_\tau&=u_{\sigma\sigma}-\left(\frac{\sigma}{2}+\mc I\right)u_\sigma
	+2\frac{v_\sigma}{v}u_\sigma+\frac12\left(u-\frac{u^3}{v^4}\right),\\
v_\tau&=v_{\sigma\sigma}-\left(\frac{\sigma}{2}+\mc I\right)v_\sigma
	+\left(\frac{u_\sigma}{u}+\frac{v_\sigma}{v}\right)v_\sigma
	+\frac12\left(v-\frac{2v^2-u^2}{v^3}\right),
\end{align*}
which reduces to the equation studied in \cite{AK07} if $u=v$.

\subsection{Linearization at the cylinder}
In a space-time neighborhood of the singular set, our results in Lemma~\ref{Cylinder}
show that the solution is close to the self-similarly shrinking cylinder soliton near the
developing neckpinch, so that $u\approx1$ and $v\approx1$. Accordingly, we introduce
(locally small) quantities $\vp$ and $\psi$ defined by
\[
\vp:=u-1 \qquad\text{and}\qquad \psi:=v-1.
\]

Linearizing near $u=1$ and $v=1$, one finds that
\[
\vp_\tau=\Big\{\vp_{\sigma\sigma}-\frac{\sigma}{2}\vp_\sigma-\vp+2\psi\Big\}+N_1(\vp,\psi),
\]
where the operator in braces is linear, and  $N_1(\vp,\psi)$ is the nonlinear term
\begin{multline*}
N_1=\left(\frac{2\psi_\sigma}{1+\psi}-\mc I\right)\vp_\sigma
+(\vp-2\psi)\frac{(1+\psi)^4-1}{(1+\psi)^4}\\
+\frac{-3\vp^2+4\vp\psi+6\psi^2-\vp^3+6\vp\psi^2+4\psi^3+4\vp\psi^3+\psi^4+\vp\psi^4}
{2(1+\psi)^4}.
\end{multline*}
In the same way, one finds that
\[
\psi_\tau=\Big\{\psi_{\sigma\sigma}-\frac{\sigma}{2}\psi_\sigma+\vp\Big\}+N_2(\vp,\psi),
\]
where the operator in braces is linear, and
\[
N_2=\left(\frac{\vp_\sigma}{1+\vp}+\frac{\psi_\sigma}{1+\psi}-\mc I\right)\psi_\sigma
-\vp\frac{(1+\psi)^3-1}{(1+\psi)^3}+\frac{\vp^2+4\psi^2+4\psi^3+\psi^4}
{2(1+\psi)^3}.
\]
For clarity of exposition, we have not simplified $N_1,N_2$ as much as possible here.

Examination of the linearized system for $\vp$ and $\psi$ reveals that it is easily
decoupled by introducing new quantities
\begin{equation}	\label{Define-xy}
x:=\vp-\psi\qquad\text{and}\qquad y=\vp+2\psi.
\end{equation}
Then, neglecting nonlinear terms, one finds that $x$ and $y$
evolve by
\begin{align*}
x_\tau&=x_{\sigma\sigma}-\frac{\sigma}{2}x_\sigma-2x+\cdots=(\mc A-3)x+\cdots,\\
y_\tau&=y_{\sigma\sigma}-\frac{\sigma}{2}y_\sigma+y+\cdots=\quad\mc Ay+\cdots,
\end{align*}
where $\mc A$ is the elliptic operator the generates the quantum harmonic oscillator,
\[
\mc A=\frac{\partial^2}{\partial\sigma^2}
	-\frac{\sigma}{2}\frac{\partial}{\partial\sigma}+1.
\]
The spectrum of $-\mc A$ is $\{\mu_k=\frac{k}{2}-1,\;k\geq0\}$, with associated
eigenfunctions the Hermite polynomials $h_k$, normalized here so that
$h_k(\sigma)=\sigma^k+\mc O(\sigma^{k-2})$. Clearly, the spectrum of $-\mc A+3$ is
$\{\nu_k=\frac{k}{2}+2,\;k\geq0\}$.

\end{document}